\def\eps {\varepsilon}
\def\d {{\rm d}}
\def\e {{\rm e}}
\def\R {\mathbb{R}}
\def\H {{\mathcal H}}
\def\h {{\rm H}}
\def\cbt {{\sc cbt}}
\def\BB {{{\mathbb B}}}
\def\C {{\mathcal C}}
\def\D {{\mathfrak D}}
\def\Q {{\mathcal Q}}
\def\EE {{\mathcal E}}
\def\LL {{\mathcal L}}
\def\T {{\mathbb T}}
\def\L {{\mathcal L}}
\def\M {{\mathcal M}}
\def \l {\langle}
\def \r {\rangle}
\def\ddt{\frac{\d}{\d t}}
\def \and {{\qquad\text{and}\qquad}}
\def\pt{\partial_t}
\def\ps{\partial_s}
\def\ptt{\partial_{tt}}
\def \di{{\rm dist}}
\def\ds {\displaystyle}
\def\Mr {{\rm M}}
\def\Ks{{\textsf{K}}}
\def \att{{\mathfrak A}}
\newtheorem{proposition}{Proposition}[section]
\newtheorem{theorem}[proposition]{Theorem}
\newtheorem{corollary}[proposition]{Corollary}
\newtheorem{lemma}[proposition]{Lemma}
\theoremstyle{definition}
\newtheorem{definition}[proposition]{Definition}
\newtheorem{remark}[proposition]{Remark}
\newtheorem*{example}{Example}
\numberwithin{equation}{section}
\def \au {\rm}
\def \ti {\it}
\def \jou {\rm}
\def \bk {\it}
\def \no#1#2#3 {{\bf #1} (#3), #2.}
\def \eds#1#2#3 {#1, #2, #3.}
\title[Viscoelasticity with time-dependent memory kernels]
{Viscoelasticity with time-dependent memory kernels.
Part II: asymptotic behavior of solutions}
\author[M. Conti, V. Danese and V. Pata]
{Monica Conti, Valeria Danese and Vittorino Pata}
\address{Politecnico di Milano - Dipartimento di Matematica
\newline\indent
Via Bonardi 9, 20133 Milano, Italy}
\email{monica.conti@polimi.it {\rm (M.\ Conti)}}
\email{valeria.danese@polimi.it {\rm (V.\ Danese)}}
\email{vittorino.pata@polimi.it {\rm (V.\ Pata)}}
\subjclass[2000]{35B41, 45K05, 73E50, 74D99}
\keywords{Viscoelasticity, Kelvin-Voigt model, memory, time-dependent kernels, processes on time-dependent spaces,
time-dependent global attractors}
\begin{document}

\begin{abstract}
We continue the analysis
on the model equation arising in the theory of viscoelasticity
$$
\ptt u(t)-\big[1+k_t(0)\big]\Delta u(t) -\int_0^\infty k'_t(s)\Delta u(t-s)\d s + f(u(t)) = g
$$
in the presence of a (convex, nonnegative and summable) memory kernel $k_t(\cdot)$ explicitly depending on time.
Such a model is apt to
describe, for instance, the dynamics of aging viscoelastic materials.
The earlier paper~\cite{Timex} was concerned with the correct mathematical setting of the problem,
and provided a well-posedness result within the novel theory of dynamical systems
acting on time-dependent spaces, recently established by Di Plinio {\it et al.}\ \cite{oscillon}.
In this second work, we focus on the asymptotic properties of the solutions,
proving the existence and the regularity of the time-dependent global attractor for
the dynamical process generated
by the equation. In addition, when $k_t$ approaches a multiple $m\delta_0$ of the Dirac mass at zero
as $t\to\infty$,
we show that the asymptotic dynamics of our problem is close to the one of
its formal limit
$$\ptt u(t)-\Delta u(t) -m\Delta\pt u(t)+ f(u(t)) = g$$
describing viscoelastic solids of Kelvin-Voigt type.
\end{abstract}

\maketitle

\section{Introduction}

\noindent
Given a bounded domain $\Omega \subset \R^3$ with smooth boundary
$\partial\Omega$, take the Hilbert space
$\h=L^2(\Omega)$, and let
$$A=-\Delta \quad\text{with domain}\quad \D(A)=H^2(\Omega)\cap H_0^1(\Omega)$$
be the Laplace-Dirichlet operator on
$\h$. In the earlier work \cite{Timex},
for any initial time $\tau \in \R$, we considered the evolution problem
in the unknown variable
$u=u(t):[\tau,\infty)\to \h$
\begin{equation}
\label{AAA}
\ptt u(t)+A u(t) +\int_0^\infty \mu_t(s)A\eta^t(s)\d s + f(u(t)) = g,
\end{equation}
\begin{equation}
\label{ETA}
\eta^t(s) =
\begin{cases}
u(t) - u(t-s),                     &\,0<s \leq t- \tau,\\
\eta_{\tau}(s-t+\tau) + u(t) - u_\tau, &\,s > t -\tau,
\end{cases}
\end{equation}
\begin{equation}
\label{CCC}
\begin{cases}
u(\tau) = u_\tau,\\
\pt u(\tau) = v_\tau,\\
\eta^\tau(s) =\eta_\tau(s),\quad s>0,
\end{cases}
\end{equation}
where the initial values $u_\tau$, $v_\tau$ and $\eta_\tau=\eta_\tau(s)$ are assigned data.
Here, $f(u)$ is a nonlinear term,
$g$ a constant-in-time external force, while
the function
$\mu_t=\mu_t(s)$ of the variable $s>0$ is the so-called memory kernel,
which is allowed to exhibit an explicit dependence on time, and is supposed to be absolutely continuous
on $\R^+=(0,\infty)$,
nonincreasing and summable (hence nonnegative) for every fixed $t$.
As discussed in detail in \cite{Timex}, problem \eqref{AAA}-\eqref{CCC}
arises in the theory of uniaxial deformations in isothermal viscoelasticity (see e.g.\ \cite{CHR,FM,RHN}).
In this context,
the auxiliary variable\footnote{The idea of describing the past values (i.e.\ the history) of $u$ via the introduction of
an auxiliary variable goes back to the pioneering papers of C.M.\ Dafermos \cite{DAF,DAF2}.}
$\eta=\eta^t(s)$ contains all the information on the past history (i.e.\ for times $t<\tau$) of
the axial displacement field $u$. Indeed, assuming $u$ to be known for all past times,
and interpreting the initial value $\eta_\tau$ as
$$\eta_\tau(s)=u(\tau)-u(\tau-s),$$
then \eqref{AAA}-\eqref{ETA} take the more familiar form\footnote{Here and in what follows,
the {\it prime} denotes the derivative with respect to the internal variable $s$.}
\begin{equation}
\label{mff}
\ptt u(t)+\big[1+k_t(0)\big]A u(t) +\int_0^\infty k'_t(s)Au(t-s)\d s + f(u(t)) = g,
\end{equation}
where the (nonnegative) convex function
\begin{equation}
\label{kkk}
k_t(s)=\int_s^\infty \mu_t(y)\d y
\end{equation}
is the integrated memory kernel, supposed to be summable as well, which satisfies by construction the relation
$$k'_t(s)=-\mu_t(s).$$
Since a formal integration by parts yields
\begin{equation}
\label{FIBP}
\int_{0}^\infty k_t'(s)A u(t-s)\d s
=-k_t(0)A u(t) + \int_0^\infty k_t(s)A\pt u(t-s)\d s,
\end{equation}
equation \eqref{mff} can be equivalently written as
\begin{equation}
\label{mff2}
\ptt u(t)+A u(t) +\int_0^\infty k_t(s)A\pt u(t-s)\d s + f(u(t)) = g.
\end{equation}

The main novelty of the model lies in the fact that the memory kernel depends itself on time.
This feature allows to describe viscoelastic materials whose structural properties evolve over time.
For instance, materials that undergo an aging process, which can be reasonably
depicted as a loss of the elastic response.
Indeed, the most interesting situation is when in the limit $t\to\infty$
the viscous effects become instantaneous.
Describing a viscoelastic solid through a rheological model
as a Maxwell element (i.e.\ a Hookean spring and a Newtonian dashpot sequentially connected)
in parallel with a lone spring, this translates
into a progressive stiffening of the spring in the Maxwell element,
becoming eventually completely rigid (see the discussion in~\cite{Timex}, see also \cite{Droz}).
In mathematical terms, this means that
$$k_t\to m\delta_0,\quad m>0,$$
namely, the integrated kernel $k_t(s)$ converges in the distributional
sense to
(a multiple of) the Dirac mass $\delta_0$ at zero as $t\to\infty$. In such a case,
equation \eqref{mff2} formally collapses in the longtime into the Kelvin-Voigt model of viscoelasticity
\begin{equation}
\label{SDWE}
\ptt u(t)+A u(t) +m A\pt u(t)+ f(u(t)) = g,
\end{equation}
where the memory term disappears.
In the terminology of Dautray and Lions \cite{DOLI},
this represents the transition from a viscoelastic solid with ``long memory" to
a viscoelastic solid with ``short memory", otherwise called of rate-type (see~\cite{DL}).
Notably, this convergence (at least at a formal level) occurs {\it within}
the dynamics, and not just by letting some parameters go artificially to zero.

The theoretical challenge in \cite{Timex} was to figure out a correct mathematical setting of the problem,
in order to obtain an existence and uniqueness result.
Indeed, the presence of a time-dependent kernel introduces essential difficulties in the analysis.
As an example,
to make a comparison with the standard theory,
in the original work of Dafermos \cite{DAF} (as well as in a number of later papers) the auxiliary variable $\eta$
is ruled by the differential equation
$$\pt \eta^t(s)=-\partial_s\eta^t(s)+\pt u(t).$$
Contrary to the case of a memory kernel independent of time,
here the problem is that the natural space of $\eta^t$ depends itself on $t$, reason why it is not even clear how
to define the time-derivative of the variable. Nonetheless, as shown in \cite{Timex}, it is possible to give
a good definition of solution within the theory of processes on time-dependent spaces,
recently devised by Di Plinio {\it et al.}\ \cite{oscillon}, and further developed in \cite{calimero,katilina,CPT,oscillon2}.

\smallskip
The aim of the present work is to continue the analysis started in \cite{Timex},
studying the asymptotic properties of the solutions from a global-geometrical point of view.

\smallskip
\noindent
$\bullet$
First, we establish the dissipative character of
the dynamical process generated by \eqref{AAA}-\eqref{CCC}, acting on suitable time-dependent phase spaces $\H_t$.
By means of a recursive argument, we are able to find
a constant $r_0>0$, independent of $t$ and $\tau$,
such that the evolution at time $t$
of $\H_\tau$-bounded sets of initial data assigned at time $\tau<t$
has $\H_t$-norm less than $r_0$ as $t-\tau\to\infty$. In the terminology of \cite{CPT},
this entails the (uniform) time-dependent absorbing set.
Incidentally, the result applies (and is new) also for the classical case of a constant-in-time memory kernel,
where an explicit stabilization estimate for the energy was not available.
Indeed, e.g.\ in \cite{Visco,GMPZ},
the absorbing set was
eventually recovered as a byproduct of the global attractor, which in turn
was obtained relying on the existence
of a Lyapunov functional.
Instead, direct energy estimates have two advantages: from the one side, the actual
entering time of the solutions into the absorber can be calculated, from the other side
the known results on the viscoelastic model can be extended, allowing for instance
the presence of a time-dependent forcing term, that destroys the
gradient system structure of the equation.

\smallskip
\noindent
$\bullet$
The second step towards a more refined analysis is showing that the dynamical process
possesses the so-called time-dependent global attractor. Loosely speaking, this is the
smallest family of $t$-labeled sets $A_t$ able to attract
bounded sets of initial data in a pullback sense.
Besides, we will prove the optimal regularity of such a family.
The existence of the time-dependent global attractor, which for the model under consideration
turns out to be also invariant under the action of the process,
provides a complete characterization of the regime behavior of the solutions.

\smallskip
\noindent
$\bullet$
The final goal is to understand what happens in the limit $k_t\to m\delta_0$.
In that situation, the convergence of the original problem to \eqref{SDWE} turns out
to be not only formal.
Indeed, we will show that the $t$-sections $A_t$ of the time-dependent global attractor
satisfy in a suitable sense the relation
$$A_t\to \hat A,\quad\text{as }\,t\to\infty,$$
where $\hat A$ is the global attractor of the semigroup
generated by~\eqref{SDWE}, as defined in the classical books \cite{BV,HAL,HAR,TEM}.
This provides a rigorous proof of the closeness of
\eqref{AAA}-\eqref{CCC} to the Kelvin-Voigt model when the integrated kernel $k_t$ approaches
the Dirac mass at zero as $t\to\infty$.

\subsection*{Outline of the paper}
Under the general assumptions of the next Section~\ref{SGA},
we first recall the main result of \cite{Timex} on the generation of the time-dependent
process of solutions. This is done in
Section~\ref{SPTDS}. Our results on the dissipative character of the process
are stated in Section~\ref{SSR}. The subsequent Sections~\ref{SSAF}-\ref{SKV}
are devoted to their proofs. Namely, in Section~\ref{SSAF} we introduce some auxiliary
energy functionals, establishing suitable integral inequalities (the proof of one of which being
quite long and technical, and therefore postponed in the Appendix A).
In Section~\ref{SDIS} we prove the existence of the time-dependent
absorbing set, while the proofs concerning the time-dependent global attractor
are given in Section~\ref{SATT} (existence), and Section~\ref{SREG} (regularity).
Here a key ingredient is a Gronwall-type lemma in integral form, discussed in the Appendix B.
In Section~\ref{SKV} we show the asymptotic closeness to the Kelvin-Voigt model when the kernel
approaches the Dirac mass.
In the final Section~\ref{STMK} we dwell on two particular memory kernels of physical interest
complying with our general assumptions.

\subsection*{Notation}
For $\sigma \in \R$, we define the compactly nested Hilbert spaces
$$\h^\sigma = \D(A^{\sigma/2}),\qquad
\l u,v\r_{\sigma}
=\l A^{\sigma/2}u,A^{\sigma/2}v\r_{\h}.$$
Throughout the paper, the index $\sigma$ will be always omitted whenever zero.
The symbol $\l\cdot,\cdot\r$ will also be used to denote the duality pairing between
$\h^\sigma$ and its dual space $\h^{-\sigma}$.
Then, for every fixed time $t$,
we introduce the weighted $L^2$-spaces, hereafter called memory spaces,
$$\M_t^{\sigma} = L^2_{\mu_t}(\R^+; \h^{\sigma+1}),\qquad
\l\eta,\xi\r_{\M_t^{\sigma}}
=\int_0^\infty \mu_t(s)\l\eta(s),\xi(s)\r_{\sigma+1}\d s.$$
We will also consider the linear operator acting on $\M_t^\sigma$
$$\T_t\eta=-\eta'
\quad\text{with domain}\quad
\D(\T_{t})=\big\{\eta\in{\M^\sigma_t}:\,
\eta'\in\M^\sigma_t,\,\,
\eta(0)=0\big\}.$$
Finally, we define the extended memory spaces
$$\H^\sigma_t = \h^{\sigma+1} \times \h^\sigma \times \M_t^{\sigma},$$
endowed with the usual product norm.
For any $r\geq 0$, we will denote by
$$\BB_t^\sigma(r)=\big\{z\in \H_t^\sigma:\,\|z\|_{\H_t^\sigma}\leq r\,\big\}$$
the closed $r$-ball about zero of $\H_t^\sigma$.

\section{General Assumptions}
\label{SGA}

\noindent
We begin to stipulate our assumptions on the external force $g$, the nonlinear term $f(u)$
and the memory kernel $\mu_t$.

\subsection{Assumptions on $\boldsymbol{g}$ and $\boldsymbol{f}$}
Let $g\in \h$ be independent of time, and  let $f\in \mathcal C^2(\R)$, with
$f(0)=0$, satisfy for some $c\geq 0$
\begin{equation}
\label{hp1-f}
|f''(u)| \leq c(1+|u|),
\end{equation}
along with the dissipation condition
\begin{equation}
\label{hp2-f}
\liminf_{|u|\to\infty}f'(u)>-\lambda_1,
\end{equation}
$\lambda_1>0$ being the first eigenvalue of $A$.
In particular, \eqref{hp2-f} easily implies the relations
\begin{align}
\label{phi-funz1}
&2\l F(u),1\r\geq -(1-\theta)\|u\|^2_1-c_f,\\
\label{phi-funz2}
&2\l f(u),u\r\geq 2\l F(u),1\r -(1-\theta)\|u\|^2_1-c_f,
\end{align}
for some $0<\theta\leq 1$ and $c_f \geq 0$,
where
$$F(u)=\int_0^u f(s)\d s.$$

\begin{remark}
\label{remzeroo}
If $f$ is essentially monotone, i.e.\
\begin{equation}
\label{hp2bis-f}
\inf_{u\in\R}f'(u)>-\lambda_1,
\end{equation}
then $c_f=0$ in formulae \eqref{phi-funz1}-\eqref{phi-funz2}.
\end{remark}

\subsection{Assumptions on the memory kernel}
The map
$$(t,s)\mapsto\mu_t(s):\R\times\R^+\to\R^+$$
satisfies the following axioms.

\begin{itemize}
\item[{\bf (M1)}]
For every fixed $t\in\R$, the map
$s\mapsto\mu_t(s)$ is nonincreasing,
absolutely continuous and summable.
We denote the total mass of $\mu_t$ by
$$
\kappa(t) = \int_0^\infty \mu_t(s)\d s.
$$

\smallskip
\item[{\bf (M2)}]
For every $\tau\in\R$, there exists a function $K_\tau:[\tau,\infty)\to\R^+$,
summable on any interval $[\tau,T]$,
such that
$$
\mu_t(s) \leq K_\tau(t)\mu_\tau(s)
$$
for every $t\geq\tau$ and every $s>0$.

\smallskip
\item[{\bf (M3)}]
For almost every fixed $s>0$,
the map $t\mapsto\mu_t(s)$ is differentiable for all $t\in\R$, and\footnote{Here and in what follows,
the {\it dot} denotes the derivative with respect to time.}
$$(t,s)\mapsto \mu_t(s)\in L^\infty({\mathcal K})\and (t,s)\mapsto \dot\mu_t(s)\in L^\infty({\mathcal K})$$
for every compact set ${\mathcal K}\subset \R\times\R^+$.

\smallskip
\item[{\bf (M4)}]
There exists $\delta>0$ such that
$$\dot\mu_t(s)+\mu_t'(s)+\delta\kappa(t)\mu_t(s)\leq 0$$
for every $t\in\R$ and almost every $s>0$.

\smallskip
\item[{\bf (M5)}]
The function $t\mapsto\kappa(t)$ fulfills
$$\inf_{t\in \R} \kappa(t) > 0.$$

\smallskip
\item[{\bf (M6)}]
The function $t\mapsto\dot\mu_t(s)$ satisfies the uniform integral estimate
$$\sup_{t\in \R}\frac1{[\kappa(t)]^2}\int_0^\infty
|\dot\mu_t(s)|\d s <\infty.$$

\item[{\bf (M7)}]
For every $t\in\R$, the function $s\mapsto\mu_t(s)$ is bounded about zero, with
$$\sup_{t\in \R} \frac{\mu_t(0)}{[\kappa(t)]^2} <\infty.$$

\smallskip
\item[{\bf (M8)}]
For every $a<b\in\R$, there exists $\nu>0$ such that
$$\int_\nu^{1/\nu}\mu_t(s)\d s\geq \frac{\kappa(t)}{2}$$
for every $t\in[a,b]$.
\end{itemize}

\subsection{About axioms (M1)-(M8)}
Axioms {\bf (M1)}-{\bf (M4)} have been introduced in \cite{Timex}, and turn
out to be sufficient in order to obtain a well-posedness result (see the next Theorem~\ref{thm-ex-un}). Actually, such
a result in \cite{Timex} holds by replacing {\bf (M4)} with the weaker
\begin{itemize}
\smallskip
\item[{\bf (M4)$'$}]
There exists a function $M:\R\to\R^+$,
bounded on bounded intervals, such that
$$
\dot\mu_t(s)+\mu_t'(s)\leq M(t)\mu_t(s)
$$
for every $t\in\R$ and almost every $s>0$.
\end{itemize}

\noindent
Note that
in the classical case where the kernel does not depend on $t$ (i.e.\ $\mu_t= \mu$ for all $t$), axiom
{\bf (M4)} boils down to the well-known assumption
$$\mu'(s)+\delta\mu(s)\leq 0,$$
devised in~\cite{DAF} and commonly adopted in the literature thereafter.
Instead, the role of {\bf (M5)}-{\bf (M8)} is more technical, and it will become clear in the Appendix B.
Actually, it is also possible to treat the case of kernels which are unbounded about
zero, by weakening axiom {\bf (M7)}. We prefer to avoid such a choice here, which would introduce annoying
(and unnecessary) complications, see e.g.\ \cite{PAT}. We also point out that we are considering
kernels that do not vanish on $\R^+$, modeling
the so-called infinite delay case. However, without changes in the proofs, our analysis
apply as well to the finite delay case, namely, when $\mu_t(s)$ becomes identically zero for $s$ large enough.

\smallskip
\noindent
We conclude with
some immediate consequences of the axioms that will be useful in the course of the investigation.

\smallskip
\noindent
$\bullet$
In the light of {\bf (M1)}, for every fixed $t$ the function $s\mapsto\mu_t(s)$ is differentiable almost
everywhere with
$\mu_t'\leq 0$. In particular (see e.g.\ \cite{Terreni}),
$$
\l \T_t\eta, \eta\r_{\M_t^{\sigma}}
= \frac12\int_0^\infty \mu'_t(s) \|\eta(s)\|^2_{\sigma+1}\d s\leq 0 ,\quad\forall \eta\in\D(\T_t).
$$
This tells that $\T_t$ is a dissipative operator. Indeed, $\T_t$ turns out to be the
infinitesimal generator of the right-translation semigroup
on $\M_t^{\sigma}$.

\smallskip
\noindent
$\bullet$
For every $\sigma\in\R$ and every $t>\tau$, axiom {\bf (M2)} entails
$$
\|\eta\|^2_{\M_t^{\sigma}}
\leq K_\tau(t)\|\eta\|^2_{\M_\tau^{\sigma}},
\quad\forall \eta \in \M_\tau^{\sigma},
$$
providing the continuous embedding
$\M_\tau^{\sigma} \subset \M_t^{\sigma}$, hence $\H_\tau^{\sigma} \subset \H_t^{\sigma}$.
In particular, $\T_t \supset \T_\tau$, i.e.\
the operators $\{\T_t\}_{t\geq\tau}$ are
increasingly nested extensions of each other.

\section{The Process on Time-Dependent Spaces}
\label{SPTDS}

\noindent
Let us begin with the definition of weak solution from \cite{Timex}.

\begin{definition}
\label{def-sol}
Let $T>\tau \in \R$, and let $z_\tau=(u_\tau,v_\tau,\eta_\tau)\in\H_\tau$ be a fixed vector.
A function
$$z(t) = (u(t), \pt u(t), \eta^t)\in\H_t \quad\text{for a.e. } t\in [\tau,T]$$
is a solution
to problem \eqref{AAA}-\eqref{CCC}
on the time-interval $[\tau, T]$ with initial datum  $z_\tau$ if:
\smallskip
\begin{itemize}
\item[{\rm (i)}] $u \in L^\infty(\tau, T; \h^1)$, $\,\pt u\in  L^\infty(\tau, T; \h)$, $\,\ptt u\in  L^1(\tau, T; \h^{-1})$.
\smallskip
\item[{\rm (ii)}] $u(\tau)=u_\tau$, $\,\pt u(\tau)=v_\tau$.
\smallskip
\item[{\rm (iii)}] The function $\eta^t$ fulfills the representation formula~\eqref{ETA}.
\smallskip
\item[{\rm (iv)}] The function $u(t)$ fulfills \eqref{AAA} in the weak sense,
i.e.\
$$\l\ptt u(t),\phi\r +\l u(t),\phi\r_1 +\int_0^\infty \mu_t(s)\l\eta^t(s),\phi\r_1\d s
+ \l f(u(t)),\phi\r = \l g,\phi\r$$
for almost every $t \in [\tau,T]$ and every test $\phi\in\h^1$.
\end{itemize}
\end{definition}

The main result of \cite{Timex} is the generation of a process of solutions
for problem~\eqref{AAA}-\eqref{CCC}.
Recall that a two-parameter family of operators
$$U(t,\tau):\H_\tau\to\H_t,\quad t\geq\tau,$$
is called a {\it processes on time-dependent spaces}
(see \cite{calimero,katilina,CPT,oscillon,oscillon2}) if
\begin{itemize}
\smallskip
\item[$\diamond$] $U(\tau,\tau)$ is the identity map on ${\H_\tau}$ for every $\tau$;
\smallskip
\item[$\diamond$] $U(t,\tau)U(\tau,s)=U(t,s)$ for every $t\geq\tau\geq s$.
\smallskip
\end{itemize}
This follows from (see \cite{Timex})

\begin{theorem}
\label{thm-ex-un}
For every $T>\tau \in \R$ and every initial datum
$z_\tau = (u_\tau, v_\tau, \eta_\tau) \in \H_\tau$,
problem \eqref{AAA}-\eqref{CCC} admits a unique solution
$$z(t)=(u(t), \pt u(t), \eta^t)=U(t,\tau)z_\tau$$
on the interval $[\tau, T]$.
Besides,
$u  \in \C([\tau, T], \h^1) \cap \C^1([\tau, T], \h)$,
$\eta^t \in\M_t$ for every $t$, and
$$\sup_{t\in[\tau,T]}\|U(t,\tau)z_\tau\|_{\H_t}<C,$$
for some $C>0$ depending only on $T,\tau$ and the $\H_\tau$-norm of $z_\tau$.
Moreover, the map
$$z_\tau\mapsto U(t,\tau)z_\tau$$
is (locally) Lipschitz from $\H_\tau$ to $\H_t$, uniformly with respect to $t\in[\tau,T]$.
\end{theorem}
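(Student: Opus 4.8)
The plan is to run a Faedo--Galerkin scheme together with energy estimates, the one genuinely non-standard point being that the memory component lives in the time-varying space $\M_t$. Fix $T>\tau$, let $\{e_j\}$ be the eigenvectors of $A$, let $\h_n=\mathrm{span}\{e_1,\dots,e_n\}$ and let $P_n$ be the orthogonal projection of $\h$ onto $\h_n$. Rather than carrying $\eta$ as an independent unknown (delicate, since $\pt\eta^t$ is not a priori meaningful on a moving space), I would impose the representation \eqref{ETA} by hand and look for $u_n(t)\in\h_n$ solving
$$\l\ptt u_n,e_j\r+\l u_n,e_j\r_1+\int_0^\infty\mu_t(s)\l\eta_n^t(s),e_j\r_1\d s+\l f(u_n),e_j\r=\l g,e_j\r,\qquad 1\le j\le n,$$
with $\eta_n^t$ reconstructed from $u_n$ and $P_nz_\tau$ via \eqref{ETA}, and $(u_n(\tau),\pt u_n(\tau))=(P_nu_\tau,P_nv_\tau)$. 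By {\bf (M3)} the memory term is a locally Lipschitz Volterra operator of $u_n$, so Carath\'eodory's theorem gives a local solution, which the a~priori bound below extends to all of $[\tau,T]$.

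For the estimates I would use the energy
$$\EE_n(t)=\|\pt u_n(t)\|^2+\|u_n(t)\|_1^2+\|\eta_n^t\|_{\M_t}^2+2\l F(u_n(t)),1\r-2\l g,u_n(t)\r,$$
test the equation with $2\pt u_n(t)$, and exploit $\pt\eta_n^t=-\ps\eta_n^t+\pt u_n(t)$ (valid on both branches of \eqref{ETA}) together with $\l\T_t\eta_n^t,\eta_n^t\r_{\M_t}=\tfrac12\int_0^\infty\mu_t'(s)\|\eta_n^t(s)\|_1^2\d s$ to reach the exact identity
$$\ddt\EE_n(t)=\int_0^\infty\big[\dot\mu_t(s)+\mu_t'(s)\big]\|\eta_n^t(s)\|_1^2\d s.$$
Compared with the autonomous theory the new contribution is the $\dot\mu_t$-term, and axiom {\bf (M4)$'$} is precisely what tames it: $\dot\mu_t+\mu_t'\le M(t)\mu_t$ yields $\ddt\EE_n\le M(t)\|\eta_n^t\|_{\M_t}^2$, while \eqref{phi-funz1} and Young's inequality give $\EE_n(t)+c\ge c_0\big(\|\pt u_n\|^2+\|u_n\|_1^2+\|\eta_n^t\|_{\M_t}^2\big)$ for suitable $c,c_0>0$. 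Hence $\ddt\EE_n\le c_0^{-1}M(t)(\EE_n+c)$, and since $M$ is bounded on $[\tau,T]$ the Gronwall lemma gives $\sup_{[\tau,T]}\EE_n\le Q(\|z_\tau\|_{\H_\tau})$ uniformly in $n$. Using \eqref{phi-funz1} again this bounds $u_n$ in $L^\infty(\tau,T;\h^1)$, $\pt u_n$ in $L^\infty(\tau,T;\h)$, and — via the embedding $\M_\tau\subset\M_t$ from {\bf (M2)} — $\eta_n^t$ in $\M_t$ uniformly in $t$; comparison in the equation with \eqref{hp1-f} and $\h^1\hookrightarrow L^6$ then bounds $\ptt u_n$ in $L^1(\tau,T;\h^{-1})$.

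The rest is routine. Banach--Alaoglu and an Aubin--Lions compactness argument give a subsequence converging weakly-$*$ in the above spaces and strongly in $\C([\tau,T],\h^{1-\eps})$, which is enough to pass to the limit in $f(u_n)$ and, via dominated convergence with {\bf (M2)}--{\bf (M3)}, in the memory integral; the limit $z(t)=(u(t),\pt u(t),\eta^t)$ with $\eta^t$ given by \eqref{ETA} is a solution in the sense of Definition~\ref{def-sol}. For uniqueness and the Lipschitz estimate I would run the same energy computation on the difference $z^1-z^2$ of two solutions: using $\|f(u^1)-f(u^2)\|\le C(1+\|u^1\|_1^2+\|u^2\|_1^2)\|u^1-u^2\|_1$ and the $L^\infty(\tau,T;\h^1)$ bounds just obtained, one gets $\ddt\bar\EE\le(M(t)+C)\bar\EE$ with $\bar\EE=\|\pt\bar u\|^2+\|\bar u\|_1^2+\|\bar\eta^t\|_{\M_t}^2$, so Gronwall yields $\|z^1(t)-z^2(t)\|_{\H_t}\le L(t)\|z_\tau^1-z_\tau^2\|_{\H_\tau}$ with $L$ bounded on $[\tau,T]$, and in particular uniqueness. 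The strong continuity $u\in\C([\tau,T],\h^1)\cap\C^1([\tau,T],\h)$ then follows from the weak continuity contained in (i) together with the continuity of $t\mapsto\EE(t)$, which for the (unique) limit solution holds with equality; and the process identities $U(\tau,\tau)=\mathrm{id}$, $U(t,\tau)U(\tau,s)=U(t,s)$ are immediate from uniqueness. I expect the main obstacle to be exactly the bookkeeping around the moving space $\M_t$: rigorously justifying the formula for $\ddt\|\eta_n^t\|_{\M_t}^2$ (including the $s>t-\tau$ branch of \eqref{ETA} and the differentiation under the integral sign, which is where {\bf (M3)} enters), and then absorbing the $\dot\mu_t$-term through {\bf (M4)$'$} so that the whole problem reduces to a Gronwall inequality with a locally bounded coefficient.
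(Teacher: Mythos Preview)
The paper does not actually prove this theorem here; it is quoted from the companion paper~\cite{Timex} (see the sentence ``This follows from (see~\cite{Timex})'' immediately preceding the statement). So there is no in-paper proof to compare your proposal against.

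That said, your outline is the natural one and is almost certainly close in spirit to what \cite{Timex} does: Galerkin approximation with $\eta_n$ imposed via the representation~\eqref{ETA}, energy bounds from~{\bf (M4)$'$}, compactness, and a Gronwall argument on the difference for uniqueness and Lipschitz dependence. One point worth sharpening is your ``exact identity''
\[
\ddt\EE_n(t)=\int_0^\infty\big[\dot\mu_t(s)+\mu_t'(s)\big]\|\eta_n^t(s)\|_1^2\,\d s.
\]
The present paper emphasizes (opening of Section~\ref{SSAF}) that one \emph{cannot} in general obtain differential identities for the $\eta$-component and must work with integral inequalities such as Lemma~\ref{lemma-eta-norm}. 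The reason is that even after projecting onto $\h_n$, the branch $s>t-\tau$ of~\eqref{ETA} carries $P_n\eta_\tau$, and for a generic $\eta_\tau\in\M_\tau$ this need not lie in $\D(\T_\tau)$; hence the integration by parts underlying $\l\T_t\eta_n,\eta_n\r_{\M_t}=\tfrac12\int\mu_t'\|\eta_n\|_1^2$, as well as the differentiation under the integral producing the $\dot\mu_t$ term, are not a~priori justified. Your identity is formally correct but becomes rigorous only after an additional approximation of $\eta_\tau$ by smooth compactly supported data in $\M_\tau$, followed by passage to the limit in the \emph{integrated} version --- which is precisely Lemma~\ref{lemma-eta-norm}. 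You already flag this as the main obstacle, so the proposal is sound; just be aware that the clean differential form requires that extra density step, and that the language inherited from~\cite{Timex} is the integral one throughout.
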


For a given $z_\tau\in\H_\tau$, we agree to define the energy
of the solution $U(t,\tau)z_\tau$ as
$$
\EE(t,\tau) =\frac12 \|U(t,\tau)z_\tau\|^2_{\H_t}.
$$

\section{Statements of the Results}
\label{SSR}

\subsection{Dissipativity}
First, we discuss the dissipative character of the process. Mathematically speaking, this
means that $U(t,\tau)$
possesses a time-dependent absorbing set, as defined in~\cite{CPT}
(see also \cite{oscillon,oscillon2}).

\begin{definition}
\label{d-absorbing}
A family $\mathfrak B=\{B_t\}_{t\in\R}$ is called a \emph{(uniform) time-dependent absorbing set}
if it is \emph{uniformly bounded}, i.e.
$$\sup_{t\in\R}\|B_t\|_{\H_t}<\infty,$$
and, for every $R>0$, there exists
an elapsed entering time $\tau_\e=\tau_\e(R)\geq0$ such that
$$
t-\tau\geq \tau_\e
\quad\Rightarrow\quad
U(t,\tau)\BB_\tau(R)\subset B_t.
$$
\end{definition}

The existence of the time-dependent absorbing set is an immediate consequence
of the following result.

\begin{theorem}
\label{MAIN-ABS}
There exist constants $\omega>0$, $R_0\geq 0$ and an increasing positive function $\Q$,
all independent of $t\geq \tau$, such that
$$\EE(t,\tau) \leq \Q(R)\e^{-\omega(t-\tau)} + R_0,
$$
whenever $\EE(\tau,\tau)\leq R$.
\end{theorem}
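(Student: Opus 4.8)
The strategy is the classical energy method adapted to the time-dependent setting: introduce a modified energy functional $\Lambda(t)$ that is equivalent to $\EE(t,\tau)$ (up to additive constants depending on the data of $f,g$), and show it satisfies a differential inequality of the form $\ddt \Lambda + 2\omega\Lambda \le c$ for suitable constants $\omega>0$ and $c\ge0$ independent of $t\ge\tau$; Gronwall then yields the stated exponential decay. The price of making $\Lambda$ genuinely dissipative is the addition of lower-order correction terms to the basic energy, the standard one being a multiple of $\l\pt u,u\r$ (to extract a $-\|u\|_1^2$ term from the elastic part, via the equation) together with a memory-correction functional, tuned against the nonlinearity through \eqref{phi-funz1}--\eqref{phi-funz2}.

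The key steps, in order, would be: (1) Formally test \eqref{AAA} with $\pt u + \varepsilon u$ and pair the equation for $\eta$ (i.e.\ the representation \eqref{ETA}, which on the level of the differential identity behaves like $\pt\eta^t = \T_t\eta^t + \pt u(t)$) against $\eta^t$ in $\M_t$; this produces the identity $\ddt\EE$ plus the crucial dissipative contribution from the memory, namely $\l\T_t\eta^t,\eta^t\r_{\M_t} = \tfrac12\int_0^\infty \mu_t'(s)\|\eta^t(s)\|_1^2\,\d s \le 0$. (2) Here is where the time-dependence bites, and where axioms {\bf (M3)}--{\bf (M4)} enter: differentiating $\|\eta^t\|_{\M_t}^2$ in $t$ also generates the term $\tfrac12\int_0^\infty \dot\mu_t(s)\|\eta^t(s)\|_1^2\,\d s$, which is not sign-definite; one absorbs it using {\bf (M4)}, which gives $\dot\mu_t(s)+\mu_t'(s)\le -\delta\kappa(t)\mu_t(s)$, so that $\dot\mu_t + \mu_t'$ contributes $-\delta\kappa(t)\|\eta^t\|_{\M_t}^2$, a genuine damping of the memory component (uniform from below by {\bf (M5)}). (3) Choose $\varepsilon>0$ small enough that the $\varepsilon\l\pt u,u\r$ correction is dominated by the energy (keeping $\Lambda\sim\EE$) while still producing a coercive $-\varepsilon\|u\|_1^2$ and $-\varepsilon\|\pt u\|^2$; the nonlinear terms $\l f(u),\pt u+\varepsilon u\r$ are handled by writing $\l f(u),\pt u\r = \ddt\l F(u),1\r$ (moved into $\Lambda$) and controlling $\varepsilon\l f(u),u\r$ from below via \eqref{phi-funz2}, at the cost of the constant $c_f$; the forcing $\l g,\pt u+\varepsilon u\r$ is absorbed by Young's inequality into the energy and a constant depending on $\|g\|$. (4) Assemble everything into $\ddt\Lambda + 2\omega\Lambda \le c$, integrate, and translate back to $\EE$ using the two-sided equivalence $\Lambda \ge \tfrac12\EE - c_0$ and $\Lambda \le 2\EE + c_1$; this gives exactly $\EE(t,\tau)\le \Q(R)\e^{-\omega(t-\tau)}+R_0$ with $\Q$ increasing in $R=\EE(\tau,\tau)$.

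The main obstacle is step (2), i.e.\ making the formal differentiation of the $t$-dependent memory norm rigorous: because the phase space $\H_t$ — and in particular $\M_t$ — varies with $t$, the quantity $\ddt\|\eta^t\|_{\M_t}^2$ is not a priori meaningful, and one cannot naively differentiate $\EE(t,\tau)$. This is precisely the difficulty flagged in the introduction. The way around it, as the paper's outline announces, is to avoid differentiating the energy directly and instead work with auxiliary functionals for which integral (not differential) inequalities can be established by careful regularization/density arguments — approximating by more regular kernels or solutions, using axioms {\bf (M3)} to justify passing the $t$-derivative under the integral on compact time intervals, and {\bf (M6)}--{\bf (M8)} to control the resulting error terms uniformly. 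I would expect the bulk of the technical work (the part the paper defers to Appendix A) to consist in deriving the controlled integral inequality for the memory contribution; granting that inequality, the recursive/bootstrapping argument that upgrades it to the uniform exponential estimate with $t$- and $\tau$-independent constants, and the final Gronwall step, are comparatively routine.
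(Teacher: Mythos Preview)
Your outline is on the right track but step (3) contains a real gap. The correction $\eps\l\pt u,u\r$ does \emph{not} produce a $-\eps\|\pt u\|^2$ term: differentiating $\Phi=2\l u,\pt u\r$ and using the equation gives $\ddt\Phi = 2\|\pt u\|^2 - 2\|u\|_1^2 + \ldots$, so $\Phi$ supplies coercivity in $\|u\|_1$ but contributes $\|\pt u\|^2$ with the \emph{wrong} sign. Unlike the damped wave equation, here there is no direct $\pt u$ damping, so testing \eqref{AAA} with $\pt u$ yields no dissipation of the kinetic energy either. The missing idea is the specific memory-correction functional
\[
\Psi(t) = -\frac{2}{\kappa(t)}\int_0^\infty \mu_t(s)\l\eta^t(s),\pt u(t)\r\,\d s.
\]
Its role is not to handle the nonlinearity (as you suggest) but to transfer damping from the memory to the velocity: formally differentiating $\Psi$ and using $\pt\eta^t=-\partial_s\eta^t+\pt u$ produces a term $-\frac{2}{\kappa(t)}\int_0^\infty\mu_t(s)\|\pt u\|^2\,\d s = -2\|\pt u\|^2$. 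The paper then sets $\Lambda=\LL+2\eps(\Phi+4\Psi)$, the factor $4$ chosen so that the $-4\|\pt u\|^2$ from $\Psi$ dominates the $+2\|\pt u\|^2$ from $\Phi$.

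Establishing the integral inequality for $\Psi$ (Lemma~\ref{lemma-Psi}) is the hard technical step and is what the paper defers to Appendix~A --- not the basic memory-norm inequality, which is already available from~\cite{Timex}. The difficulty is that $\ddt\Psi$ generates several error terms (from $\dot\kappa(t)$, from $\dot\mu_t$, and from the integration by parts in $s$), and controlling them uniformly in $t$ is exactly where axioms {\bf (M5)}--{\bf (M8)} enter; the argument proceeds via a compactly supported cut-off $\mu_t^\eps=\phi_\eps\mu_t$ of the kernel and a passage to the limit. Two smaller points you may want to sharpen: as you anticipate, the paper never writes $\ddt\Lambda$ but works with integral inequalities $\Lambda(b)+2\eps\int_a^b\Lambda\le\Lambda(a)+\eps Q(b-a)$ and a dedicated integral Gronwall lemma (Appendix~B); and the rate $\eps$ in Lemma~\ref{lemma-Lambda} initially depends on $R$ (through the a~priori bound of Proposition~\ref{prop-bound}, needed to control the nonlinearity in $\Psi$), and is upgraded to an $R$-independent $\omega$ by a short bootstrap at the end.
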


Indeed, after Theorem \ref{MAIN-ABS}, Definition \ref{d-absorbing} applies
by merely taking
$$B_t=\BB_t(r_0)
\quad\text{with}\quad r_0>\sqrt{2R_0}\,.$$
In absence of a forcing term, and for $f$ essentially monotone,
the theorem holds with $R_0=0$,
yielding the exponential decay of the energy.

\begin{corollary}
\label{corMAIN-ABS}
If in addition $g=0$ and $f$ satisfies \eqref{hp2bis-f}, then
$$\EE(t,\tau) \leq \Q(R)\e^{-\omega(t-\tau)},
$$
whenever $\EE(\tau,\tau)\leq R$.
\end{corollary}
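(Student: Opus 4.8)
The statement is a refinement of Theorem~\ref{MAIN-ABS}, obtained by tracking how the additive constant $R_0$ depends on the data. The plan is to re-run the proof of Theorem~\ref{MAIN-ABS} (carried out in Sections~\ref{SSAF}--\ref{SDIS}), keeping an eye on where $g$ and the constant $c_f$ of \eqref{phi-funz1}--\eqref{phi-funz2} enter, and to observe that $R_0$ is, up to a multiplicative constant independent of $t\ge\tau$, a function of $\|g\|$ and $c_f$ alone. Since $g=0$ by hypothesis, and since Remark~\ref{remzeroo} guarantees that \eqref{hp2bis-f} forces $c_f=0$ in \eqref{phi-funz1}--\eqref{phi-funz2}, every such term disappears and the decay estimate becomes homogeneous.

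Concretely, the proof of Theorem~\ref{MAIN-ABS} produces an auxiliary functional $\Lambda(t)$, built out of the energy $\EE(t,\tau)$ plus controlled lower-order corrections (a term of the form $\l\pt u,u\r$ and a memory-type term), enjoying a two-sided bound
\[
\alpha\,\EE(t,\tau)-c\,c_f\ \le\ \Lambda(t)\ \le\ \Q_0\big(\EE(t,\tau)\big)+c\big(c_f+\|g\|^2\big),
\]
for suitable constants $\alpha>0$, $c\ge0$ and an increasing function $\Q_0$, all independent of $t\ge\tau$, together with a differential inequality
\[
\ddt\Lambda(t)+\omega\,\Lambda(t)\ \le\ c\big(c_f+\|g\|^2\big).
\]
Putting $g=0$ and $c_f=0$, the lower bound becomes $\alpha\,\EE(t,\tau)\le\Lambda(t)$: here one uses precisely \eqref{phi-funz1} with $c_f=0$, which gives $\|u\|_1^2+2\l F(u),1\r\ge\theta\|u\|_1^2\ge 0$, so that the potential part of the energy is nonnegative and $\Lambda$ is genuinely coercive on $\H_t$ with no additive defect; the upper bound becomes $\Lambda(t)\le\Q_0\big(\EE(t,\tau)\big)$, the growth of $\Q_0$ coming only from the polynomial control of $F$ via \eqref{hp1-f} and the embedding $\h^1\hookrightarrow L^6(\Omega)$; and the differential inequality turns into the homogeneous $\ddt\Lambda+\omega\Lambda\le 0$.

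The Gronwall-type lemma of Appendix~B — whose hypotheses involve only the kernel axioms {\bf (M5)}--{\bf (M8)} and are therefore untouched by the present restriction — then yields $\Lambda(t)\le\Lambda(\tau)\,\e^{-\omega(t-\tau)}$ uniformly in $\tau$; the recursive step used to make $\omega$ and $\Q_0$ independent of $\tau$ is purely structural and survives verbatim. Combining with the two-sided bound, $\EE(\tau,\tau)\le R$ gives $\Lambda(\tau)\le\Q_0(R)$ and hence
\[
\EE(t,\tau)\ \le\ \tfrac1\alpha\,\Lambda(t)\ \le\ \tfrac1\alpha\,\Q_0(R)\,\e^{-\omega(t-\tau)},
\]
which is the assertion after renaming $\Q(R)=\tfrac1\alpha\Q_0(R)$.

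The only point requiring genuine care is the verification that dropping $c_f$ really does make $\Lambda$ coercive with no additive constant — equivalently, that no hidden constant re-enters through the corrective functionals or through the appendix estimates. This is exactly what \eqref{phi-funz1}--\eqref{phi-funz2} with $c_f=0$ provide for the potential term, while all kernel-related quantities in Appendix~B are homogeneous of degree zero in the data $g,f$ and hence play no role; so this is the step I expect to demand the most attention, though no new computation beyond revisiting the chain of estimates is needed.
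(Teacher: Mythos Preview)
Your approach is essentially the paper's own: track the constants $Q_0,Q_1,Q$ through the proof of Theorem~\ref{MAIN-ABS} and observe that all of them vanish when $g=0$ and $c_f=0$ (the latter by Remark~\ref{remzeroo}), so that $R_0=0$. One correction worth flagging: the paper does \emph{not} derive a differential inequality for $\Lambda$ --- as stressed at the start of Section~\ref{SSAF}, the memory component prevents differentiating in $t$, and the argument proceeds via the \emph{integral} inequality of Lemma~\ref{lemma-Lambda}, to which the integral Gronwall Lemma~\ref{lemma-new-gw} is then applied (that lemma is pure real analysis and involves no kernel axioms; axioms {\bf (M5)}--{\bf (M8)} enter earlier, in Appendix~A, when establishing the $\Psi$-estimate).
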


\subsection{The global attractor}
We then deepen the longterm analysis of the system,
looking for the time-dependent attractor. This is the object characterizing the regime behavior
of a process defined on a time-dependent family of spaces~\cite{CPT,calimero,oscillon, oscillon2}.

\begin{definition}
\label{def-pulla}
The {\it time-dependent global attractor} for $U(t,\tau)$ is the smallest
family $\att=\{A_t\}_{t\in\R}$
with the following properties:
\begin{itemize}
\item[{\rm (i)}] Each section $A_t$ is compact in $\H_t$.
\item[{\rm (ii)}] $\att$ is \emph{pullback attracting}, namely, it is uniformly bounded and
the limit\footnote{We denote the Hausdorff semidistance
of two (nonempty) sets $B,C\subset \H_t$ by
$$\di_{\H_t}(B,C)=\sup_{x\in B}\,\inf_{y\in C}\,\|x-y\|_{\H_t}.$$}
$$\lim_{\tau\to-\infty}\Big[\di_{\H_t}\big(U(t,\tau) C_\tau,A_t\big)\Big]=0$$
holds for every uniformly bounded family $\mathfrak C=\{C_t\}_{t\in\R}$ and every $t\in\R$.
\end{itemize}
\end{definition}

The existence of the (invariant) time-dependent global attractor for our problem
reads as follows.

\begin{theorem}
\label{MAIN-ATTRACTOR}
The process $U(t,\tau):\H_\tau\to\H_t$ possesses
the time-dependent global attractor $\att=\{A_t\}_{t\in\R}$. Besides, the attractor is invariant,
i.e.\
$$U(t,\tau)A_\tau=A_t, \quad \forall t\geq \tau.$$
\end{theorem}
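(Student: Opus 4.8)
The plan is to verify the abstract criteria for the existence of a time-dependent global attractor from the theory developed in \cite{CPT,oscillon}. Recall that, for a process acting on time-dependent spaces, the existence of the (minimal, pullback attracting) family $\att$ with compact sections follows once one establishes: (a) a time-dependent absorbing set $\mathfrak B = \{B_t\}$ for the process, and (b) some form of asymptotic compactness (pullback or, in the presence of a Lyapunov functional, asymptotic smoothing) of $U(t,\tau)$ on the absorbing family. Ingredient (a) is already in hand: by Theorem~\ref{MAIN-ABS} the balls $B_t = \BB_t(r_0)$ with $r_0 > \sqrt{2R_0}$ form a uniform time-dependent absorbing set. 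The bulk of the work is therefore (b), and the natural route is a \emph{decomposition} of the solution operator into a uniformly decaying part and a uniformly (pullback) compact part, in the spirit of the standard treatment of viscoelastic equations \cite{Visco,GMPZ}, here carried out carefully in the time-dependent phase spaces $\H_t$.

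Concretely, I would split the solution with initial datum $z_\tau \in B_\tau$ as $U(t,\tau)z_\tau = L(t,\tau)z_\tau + N(t,\tau)z_\tau$, where $L$ solves a linear problem of the same structure but with the nonlinearity replaced by its ``bad'' part (say, the datum decaying to zero) and $N$ collects the smoothing contribution, including the ``good'' part of $f$ and the forcing $g$. The first step is then a uniform decay estimate $\|L(t,\tau)z_\tau\|_{\H_t} \le \Phi(r_0)\e^{-\gamma(t-\tau)}$, which is obtained by energy estimates entirely analogous to those yielding Theorem~\ref{MAIN-ABS}, using the dissipativity of $\T_t$ and axiom {\bf (M4)} to control the memory component. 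The second step is a uniform higher-regularity bound $\|N(t,\tau)z_\tau\|_{\H_t^{1/2}} \le C(r_0)$ (or into $\H^\sigma_t$ for some $\sigma>0$): for the $u$ and $\pt u$ components one exploits the subcritical growth \eqref{hp1-f} of $f$ together with the Sobolev embeddings in $\R^3$, while the delicate point is propagating regularity to the memory variable $\eta^t$, which requires estimating $\T_t\eta^t$ and hence invoking the additional axioms {\bf (M5)}--{\bf (M8)} exactly as anticipated in the paper's outline (the Gronwall-type lemma of Appendix~B). Since the embedding $\H^\sigma_t \hookrightarrow \H_t$ is compact uniformly in $t$ on uniformly bounded families (a consequence of the compact nesting of the $\h^\sigma$ and of axiom {\bf (M2)} controlling the memory norms), this yields the required pullback asymptotic compactness.

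From the decomposition plus the absorbing set, the abstract machinery of \cite{CPT} produces a family $\att=\{A_t\}$ that is pullback attracting, has compact sections, and is minimal; I would simply quote that theorem rather than reprove it. For invariance, $U(t,\tau)A_\tau = A_t$, the standard argument applies: using that the process is (locally) Lipschitz and continuous in its arguments by Theorem~\ref{thm-ex-un}, one shows that $\{U(t,\tau)A_\tau\}$ is itself a pullback attracting family with compact sections, so minimality forces $A_t \subset U(t,\tau)A_\tau$; the reverse inclusion follows from the attracting property together with a diagonal/backward-orbit argument, noting that the $t$-sections of $\att$ are bounded and that $U$ maps bounded sets to bounded sets. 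A subtlety worth flagging is that invariance genuinely uses the \emph{backward} solvability in a weak sense built into the history formulation \eqref{ETA}, so I would state it by constructing, for each $a \in A_t$, a complete bounded trajectory lying in $\att$ and passing through $a$ at time $t$.

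\textbf{Main obstacle.} The hard part will be the uniform regularity estimate for the memory component of $N(t,\tau)z_\tau$: because the memory space $\M_t^\sigma$ itself varies with $t$, one cannot directly differentiate $\eta^t$ in time, and the higher-order energy functional must be designed so that the terms coming from the $t$-dependence of $\mu_t$ are absorbed using {\bf (M3)}, {\bf (M6)} and {\bf (M7)} (all suitably normalized by $\kappa(t)^2$), with the final closure of the estimate resting on the integral Gronwall lemma of Appendix~B. Everything else is, in essence, a careful transcription of the classical viscoelasticity arguments into the time-dependent functional framework.
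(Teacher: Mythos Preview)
Your overall strategy---absorbing set plus decomposition into a decaying piece and a regularizing piece, then appeal to the abstract criterion of \cite{CPT}---is exactly the paper's route, and your decay and $\H_t^\sigma$-regularity steps are essentially correct (the paper uses $\sigma=1/3$ and a splitting $f=f_0+f_1$ with $f_0$ monotone, but that is a matter of implementation).

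There is, however, a genuine gap at the point where you pass from ``$N(t,\tau)z_\tau$ bounded in $\H_t^\sigma$'' to ``pullback asymptotic compactness''. You assert that the embedding $\H_t^\sigma\hookrightarrow\H_t$ is compact; it is not. While $\h^{\sigma+1}\hookrightarrow\h^1$ and $\h^\sigma\hookrightarrow\h$ are compact, the memory component lives in $\M_t^\sigma=L^2_{\mu_t}(\R^+;\h^{\sigma+1})$, and the embedding $\M_t^\sigma\subset\M_t$ \emph{fails} to be compact because the underlying domain $\R^+$ is unbounded (see \cite{PZ}). Axiom {\bf (M2)} only yields the continuous inclusion $\M_\tau^\sigma\subset\M_t^\sigma$; it says nothing about compactness of $\M_t^\sigma$ in $\M_t$. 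So bounded $\H_t^\sigma$-balls, while pullback attracting, do \emph{not} have compact sections, and the abstract theorem from \cite{CPT} cannot be invoked at this stage.

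The missing ingredient is an extra argument producing, inside the $\H_t^\sigma$-ball, a genuinely compact set $C_t\subset\H_t$ that is still pullback attracting. In the paper this is done by importing, essentially verbatim, the construction of \cite[Lemma~7.2]{Visco}: one exploits the explicit representation formula~\eqref{ETA} for $\eta^t$ to obtain, for the regularizing piece, a uniform tail estimate in $s$ together with control of $\T_t\eta^t$ (i.e.\ $\partial_s\eta^t$) in $\M_t$; this combination is what forces relative compactness of the memory component. Your ``Main obstacle'' paragraph anticipates some difficulty with the memory variable but misidentifies it as a regularity-propagation issue handled by {\bf (M5)}--{\bf (M8)}; the actual obstruction is the lack of compact embedding, and the cure is the tail/derivative control just described.

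A minor point: invariance is cheaper than you suggest. Once the attractor exists, \cite[Theorem~5.6]{CPT} gives invariance directly from continuity of $z_\tau\mapsto U(t,\tau)z_\tau$, which you already have from Theorem~\ref{thm-ex-un}; no backward-solvability or diagonal argument is needed.
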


According to \cite[Theorem 3.2]{calimero},
the invariant time-dependent global attractor is characterized as
the set of all {\it complete bounded trajectories}
({\cbt}) of the process, that is,
$$A_t=\big\{z(t): \,\text{$z$ {\cbt} of $U(t,\tau)$}\big\},$$
where a {\cbt} of $U(t,\tau)$ is a map
$$t\mapsto z(t)=(u(t),\pt u(t),\eta^t)\in \H_t$$
satisfying
$$
\sup_{t\in\R}\|z(t)\|_{\H_t}<\infty \and z(t)=U(t,\tau)z(\tau)\,\,\,\, \forall t\geq \tau\in\R.
$$

\begin{corollary}
\label{Corsquaz}
Given any {\cbt} $(u,\pt u,\eta)$, the equality
\begin{equation}
\label{squaz}
\eta^t(s)=u(t-s)-u(t)
\end{equation}
holds for every $t\in\R$ and every $s>0$.
In particular, \eqref{AAA}-\eqref{ETA} take the form~\eqref{mff}.
\end{corollary}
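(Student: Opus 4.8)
The plan is to exploit the characterization of the invariant time-dependent attractor as the set of complete bounded trajectories (\cbt). Fix a \cbt\ $z(t)=(u(t),\pt u(t),\eta^t)$, so that $\sup_{t\in\R}\|z(t)\|_{\H_t}<\infty$ and $z(t)=U(t,\tau)z(\tau)$ for all $\tau\le t$. The point is that on a \cbt\ the past history is genuinely generated by $u$ itself, with no ``spurious'' initial memory datum surviving. Concretely, I would first go back to the representation formula \eqref{ETA}: for the piece of the evolution started at time $\tau$, one has
\begin{equation*}
\eta^t(s)=u(t-s)-u(t)\quad\text{for }0<s\le t-\tau,
\end{equation*}
while for $s>t-\tau$ the value of $\eta^t(s)$ still depends on the datum $\eta_\tau$ through $\eta_\tau(s-t+\tau)+u(t)-u_\tau$. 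So for every fixed $t$ and every fixed $s>0$, choosing any $\tau<t-s$ already gives $\eta^t(s)=u(t-s)-u(t)$. Since $z$ is a \cbt, such a $\tau$ exists (indeed all sufficiently negative $\tau$ work), and the identity \eqref{squaz} follows for all $t\in\R$ and all $s>0$.

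The only subtlety is that the equality $\eta^t(s)=u(t-s)-u(t)$ for $0<s\le t-\tau$ is, a priori, an identity in the memory space $\M_t$, hence an equality for almost every $s$ in that range, and one should make sure it is compatible across different choices of $\tau$ and that the right-hand side $u(t-s)-u(t)$ is a genuine, well-defined function of $s$. This is where the regularity in Theorem~\ref{thm-ex-un} is used: along a \cbt\ the map $t\mapsto u(t)$ lies in $\C(\R,\h^1)\cap\C^1(\R,\h)$ (apply the theorem on each interval $[\tau,T]$ and use uniqueness and the uniform bound), so $s\mapsto u(t-s)-u(t)$ is a continuous $\h^1$-valued function on $\R^+$ vanishing at $s=0$, and the pointwise-in-$s$ identity is unambiguous. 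One also checks it is consistent: enlarging $\tau$ only extends the range $0<s\le t-\tau$ on which the same formula holds, so there is no clash, and letting $\tau\to-\infty$ exhausts all $s>0$. I would also note in passing that the boundedness $\sup_t\|z(t)\|_{\H_t}<\infty$ forces the contribution of the ``tail'' term to be negligible in the limit, which is implicitly what makes the history term in \eqref{AAA} coincide with the convolution in \eqref{mff}.

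Finally, once \eqref{squaz} is established, substituting $\eta^t(s)=u(t-s)-u(t)$ into the history integral of \eqref{AAA} gives
\begin{equation*}
\int_0^\infty\mu_t(s)A\eta^t(s)\d s=\int_0^\infty\mu_t(s)A\big(u(t-s)-u(t)\big)\d s
=-\kappa(t)Au(t)+\int_0^\infty\mu_t(s)Au(t-s)\d s,
\end{equation*}
and recalling $k_t(s)=\int_s^\infty\mu_t(y)\d y$ so that $k_t(0)=\kappa(t)$ and $k_t'(s)=-\mu_t(s)$, this is exactly $\big[k_t(0)\big]Au(t)+\int_0^\infty k_t'(s)Au(t-s)\d s$ moved to the correct side; plugging back into \eqref{AAA} yields \eqref{mff}. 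I do not expect a serious obstacle here: the whole argument is essentially an unwinding of the definition \eqref{ETA} combined with the \cbt\ characterization, and the only care needed is the measure-theoretic one of passing from the a.e.-in-$s$ statement to the everywhere statement, which is handled by the continuity of $u$ provided by Theorem~\ref{thm-ex-un}.
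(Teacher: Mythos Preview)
Your approach is essentially the same as the paper's: use the representation formula~\eqref{ETA} on the interval $[\tau,t]$ to get the identity for $0<s\le t-\tau$, and then let $\tau\to-\infty$ (equivalently, pick $\tau<t-s$) to cover all $s>0$. The paper gives this argument in a single sentence immediately after the statement, without the additional measure-theoretic and regularity remarks you supply; those remarks are reasonable but not strictly needed for the level of rigor the paper adopts.
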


Indeed, being $(u,\pt u,\eta)$ a {\cbt}, for any fixed $t>\tau$
we readily see from~\eqref{ETA} that such an equality is true
for all $0<s\leq t-\tau$,
and letting $\tau\to-\infty$ the claim follows.

\subsection{Regularity}
The next theorem is concerned with the regularity of the attractor.

\begin{theorem}
\label{regMAIN-ATTRACTOR}
The sections of the time-dependent global attractor $\att=\{A_t\}_{t\in\R}$
(belong to and) are
uniformly bounded in $\H^1_t$, namely,
$$\sup_{t\in\R}\|A_t\|_{\H_t^1}<\infty.$$
\end{theorem}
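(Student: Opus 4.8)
The plan is to obtain the $\H^1_t$-bound on the attractor via the classical decomposition of the process into a uniformly exponentially decaying part and a uniformly $\H^1_t$-bounded (hence ``smoothing'') part, adapted to the time-dependent setting. Fix a uniformly bounded family $\mathfrak C=\{C_t\}$ contained in the absorbing balls $\BB_t(r_0)$ produced by Theorem~\ref{MAIN-ABS}; by invariance and the {\cbt} characterization it suffices to bound $\|U(t,\tau)z_\tau\|_{\H^1_t}$ uniformly in $t$ for initial data $z_\tau\in\BB_\tau(r_0)$ as $t-\tau\to\infty$. Since $f\in\mathcal C^2(\R)$ and $f(0)=0$, the growth condition \eqref{hp1-f} together with the $\h^1$-bound on $u$ (which sits in $H^1(\Omega)\hookrightarrow L^6(\Omega)$ in dimension three) gives $f(u)\in L^\infty(\tau,\infty;\h^{1-\epsilon})$ or, after splitting $f(u)=\ell(u)+(f(u)-\ell(u))$ with $\ell$ a suitable linear piece absorbed into $A$, a Lipschitz lower-order term; this is the standard mechanism by which the nonlinearity does not obstruct the bootstrap.

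Concretely, write $U(t,\tau)z_\tau=L(t,\tau)z_\tau+N(t,\tau)z_\tau$, where $L(t,\tau)(u_\tau,v_\tau,\eta_\tau)=(v(t),\pt v(t),\zeta^t)$ solves the \emph{linear} homogeneous problem
\begin{equation}
\ptt v+Av+\int_0^\infty\mu_t(s)A\zeta^t(s)\,\d s+\lambda v=0,
\end{equation}
with the same initial data and with $\zeta$ governed by the analogue of \eqref{ETA}, $\lambda>0$ chosen using \eqref{hp2-f} so that the linearized energy is coercive, and $N(t,\tau)z_\tau=(w(t),\pt w(t),\xi^t)$ solves the nonhomogeneous problem with forcing $g+\lambda v-f(u)$ and null initial data. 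For the linear part, multiplying the equation by $\pt v$ (plus the standard Dafermos-type multipliers that exploit axiom \textbf{(M4)} and the dissipativity of $\T_t$ recalled in Section~\ref{SGA}) yields, exactly as in the proof of Theorem~\ref{MAIN-ABS} but now with no forcing and a genuinely coercive energy, an estimate of the form
\begin{equation}
\|L(t,\tau)z_\tau\|^2_{\H_t}\leq Cr_0^2\,\e^{-\omega(t-\tau)};
\end{equation}
here the uniform-in-$t$ control of the memory term is where axioms \textbf{(M5)}--\textbf{(M8)} and the Gronwall-type lemma of Appendix~B enter.

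For the nonlinear part $N(t,\tau)z_\tau$, I would run the same energy scheme one derivative higher, i.e. test with $A\pt w$ and the corresponding higher-order Dafermos multipliers, to obtain a differential inequality for $\|N(t,\tau)z_\tau\|^2_{\H^1_t}$ of the form $\ddt\Lambda+\omega'\Lambda\leq C(\|f(u)\|^2_{\h^1}+\|g\|^2+\lambda^2\|v\|^2_{\h^1})$, where $\Lambda$ is equivalent to the $\H^1_t$-energy. The right-hand side is bounded: $\|g\|$ is fixed, $\|v\|_{\h^1}$ is controlled by the linear estimate, and $\|f(u)\|_{\h^1}\leq\|f'(u)\nabla u\|\leq C(1+\|u\|^2_{L^6})\|\nabla u\|_{L^6}$-type bounds reduce, via \eqref{hp1-f} and the uniform $\h^1$-bound from Theorem~\ref{MAIN-ABS}, to a uniform constant (possibly after the linear-splitting trick above so that only a globally Lipschitz piece appears at the $\h^1$ level). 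Integrating and using that $\BB_\tau(r_0)\subset\H_\tau$ has bounded $\H^1_\tau$-norm on the relevant sets — or, for the attractor itself, starting from a {\cbt} so that $\Lambda$ is already finite and bounded for all past times — gives $\sup_{t}\|N(t,\tau)z_\tau\|_{\H^1_t}\leq M$ uniformly. Combining the two parts: for a {\cbt} $z$, $\|z(t)\|_{\H^1_t}\leq\|L(t,\tau)z(\tau)\|_{\H^1_t}+\|N(t,\tau)z(\tau)\|_{\H^1_t}$, and letting $\tau\to-\infty$ the first term must be shown to stay bounded (using that $z(\tau)$ lies in the absorbing ball, so its $\H_\tau$-norm — not $\H^1_\tau$-norm — is bounded, whence one uses the $\H_t\to\H^1_t$ smoothing of the nonlinear part and the exponential decay of the linear part in $\H_t$ rather than $\H^1_t$); this yields $\sup_{t\in\R}\|A_t\|_{\H^1_t}<\infty$.

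\textbf{Main obstacle.} The delicate point is the $\H^1_t$-level energy estimate for the memory component: the natural phase space $\M^1_t$ itself moves with $t$, so one cannot differentiate $\xi^t$ in time in the naive way (precisely the difficulty flagged in the Introduction), and the Dafermos multiplier technique must be carried out through the representation formula \eqref{ETA} and the nested-extension property $\T_t\supset\T_\tau$, keeping every constant independent of $t$. Making the higher-order analogue of \textbf{(M4)} and the boundedness axioms \textbf{(M5)}--\textbf{(M8)} deliver a $t$-uniform exponential-type inequality — via the integral Gronwall lemma of Appendix~B rather than a pointwise one — is the technical heart of the argument; the nonlinear term, by contrast, is routine in dimension three given \eqref{hp1-f}.
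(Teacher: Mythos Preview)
Your decomposition-and-bootstrap strategy is the right template, and your identification of the memory-space difficulty is on target. But there is a genuine gap at the nonlinear step that the paper's proof is specifically designed to avoid.

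\medskip
\textbf{The gap.} You assert that the forcing in the $\H^1_t$-energy inequality for the $N$-part is uniformly bounded because
\[
\|f(u)\|_{1}=\|f'(u)\nabla u\|\leq C\big(1+\|u\|^2_{L^6}\big)\|\nabla u\|_{L^6}
\]
reduces, via \eqref{hp1-f} and the uniform $\h^1$-bound from Theorem~\ref{MAIN-ABS}, to a constant. It does not: the absorbing-set estimate only gives $u\in\h^1$, hence $\nabla u\in L^2$, \emph{not} $\nabla u\in L^6$. In three dimensions with the critical growth $|f''(u)|\leq c(1+|u|)$, the product $|u|^2\,\nabla u$ (or, in the alternative arrangement, $|u|^2\,\pt u$) cannot be placed in $L^2$ from $u\in\h^1$ and $\pt u\in\h$ alone; any H\"older splitting forces either $u\in L^\infty$ or one extra derivative on $u$ or $\pt u$, neither of which is available. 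The ``linear-splitting trick'' you mention does not rescue this: subtracting a linear piece $\ell(u)$ leaves the full cubic part intact, and putting a genuinely nonlinear $f_0$ into the decaying factor destroys the $\H^1_t$-level decay you would need there. Likewise, the parenthetical fallback ``starting from a {\cbt} so that $\Lambda$ is already finite'' is circular: finiteness of the $\H^1_t$-energy along a {\cbt} is exactly the conclusion to be proved.

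\medskip
\textbf{What the paper does instead.} The paper closes this gap by a two-stage bootstrap. In the existence proof of the attractor (Section~\ref{SATT}) the nonlinearity is split as $f=f_0+f_1$ with $f_1$ globally Lipschitz and $f_0$ vanishing on $[-1,1]$; this yields a compact attracting family bounded in $\H^{1/3}_t$, hence $\sup_{t}\|A_t\|_{\H^{1/3}_t}<\infty$. Only then, for Theorem~\ref{regMAIN-ATTRACTOR}, does one take initial data on the attractor (so $u\in\h^{4/3}$, $\pt u\in\h^{1/3}$ uniformly) and run the $\H^1_t$-energy for the ``smoothing'' part with the full $f$. The key estimate becomes
\[
2\l f'(u)\pt u, Aw\r
\leq C\big(1+\|u\|_{L^{18}}^2\big)\|\pt u\|_{L^{18/7}}\|Aw\|
\leq \frac{\eps}{2}\|w\|_2^2+\frac{C}{\eps},
\]
using the embeddings $\h^{4/3}\subset L^{18}$ and $\h^{1/3}\subset L^{18/7}$ (note $2/18+7/18=1/2$), which are precisely what the intermediate $\H^{1/3}_t$ regularity supplies. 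The final passage from ``$U_1$-part uniformly in $\BB^1_t(r)$'' to ``$A_t\subset\BB^1_t(r)$'' is then the Hausdorff-distance argument you sketched: by invariance, $\di_{\H_t}(A_t,\BB^1_t(r))\leq C\e^{-\omega(t-\tau)/2}\to 0$ as $\tau\to-\infty$.

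In short, your outline is missing the intermediate $\H^{1/3}_t$ step; without it the $\H^1_t$ estimate for the compact part does not close at the critical exponent.
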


\begin{remark}
\label{NEWEQ}
For any {\cbt} $(u,\pt u,\eta)$,
equality \eqref{squaz} together with the regularity of Theorem~\ref{regMAIN-ATTRACTOR}
ensure that
$$\eta^t\in\D(\T_t),\quad\forall t\in\R.
$$
Besides,
the formal integration by parts~\eqref{FIBP}, and so
the passage from~\eqref{mff} to~\eqref{mff2}, becomes rigorous.
Thus, the function $u(t)$ satisfies~\eqref{mff2}
in the weak sense for almost every $t\in\R$.
\end{remark}

\subsection{Recovering Kelvin-Voigt}
We finally discuss the case when
$k_t\to m\delta_0$ for some $m>0$, that is,\footnote{Condition \eqref{deltaconv}
is the same as saying that the measure $k_t(s)\d s$ on $[0,\infty)$ converges weakly to the measure
$m\delta_0$.}
\begin{equation}
\label{deltaconv}
\lim_{t\to\infty}\int_\eps^\infty k_t(s)\d s=
\begin{cases}
m &\text{if }\eps=0,\\
0 &\text{if }\eps>0.
\end{cases}
\end{equation}
Accordingly, in the longtime our problem collapses into
the Kelvin-Voigt model of viscoelastic solids~\eqref{SDWE}.

It is well-known that equation~\eqref{SDWE}, often referred to as
{\it strongly damped wave equation}, generates a $\C_0$-semigroup
of solutions
$$S(t):\h^1\times \h\to \h^1\times \h,$$
possessing the global attractor $\hat A$ in the classical sense.
Besides, $\hat A$ is a bounded subset of $\h^2\times \h^1$, and
coincides with the sections at (any) time $t_0\in\R$ of the set of all {\cbt} of $S(t)$
(see, e.g.\ \cite{carv-chol,EdenKala,ghimarz,massat,pasqu,PZsdwe,webb}). Namely, for any fixed $t_0\in\R$,
$$\hat A=\big\{\hat z(t_0):\,\text{$\hat z$ {\cbt} of $S(t)$}\big\}.$$
Recall that
a {\cbt} of the semigroup $S(t)$ is a map  (see \cite{HAR})
$$t\mapsto \hat z(t)=(\hat u(t),\pt\hat u(t))\in \h^1\times \h$$
satisfying
$$
\sup_{t\in\R}\|\hat z(t)\|_{\h^1\times \h}<\infty \and \hat z(t+\tau)=S(t)\hat z(\tau)\,\,\,\,
\forall t\geq 0,\,\forall\tau\in\R.
$$

Our last theorem establishes the closeness of the longterm dynamics of \eqref{AAA}-\eqref{CCC}
to the one of the ``limit problem" \eqref{SDWE} when $k_t\to m\delta_0$.

\begin{theorem}
\label{limitece}
Let \eqref{deltaconv} hold. Then,
for any sequence $(u_n,\partial_t u_n, \eta_n)$ of {\cbt} of $U(t,\tau)$ and any $t_n\to\infty$,
there exists a {\cbt} $(\hat u,\partial_t \hat u)$ of $S(t)$ such that
the convergence
\begin{equation}
\label{conv-cc}
\sup_{t\in [-T,T]}\,\Big[\|u_n(t+t_n)-\hat u(t)\|_{\h^1}+\|\partial_t u_n(t+t_n)-\partial_t \hat u(t)\|_{\h}\Big]\to 0
\end{equation}
holds up to a subsequence as $n\to\infty$ for every $T>0$.
\end{theorem}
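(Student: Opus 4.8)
The plan is to pass to the limit in the equation \eqref{mff} satisfied by a {\cbt} of $U(t,\tau)$, taking advantage of the fact that along {\cbt}'s we have the explicit representation $\eta^t(s)=u(t-s)-u(t)$ from Corollary~\ref{Corsquaz}, so the memory term becomes $\int_0^\infty k_t'(s)Au(t-s)\,\d s$. First I would fix a sequence $z_n=(u_n,\partial_t u_n,\eta_n)$ of {\cbt}'s and $t_n\to\infty$, and introduce the time-shifted functions $w_n(t)=u_n(t+t_n)$. By Theorem~\ref{regMAIN-ATTRACTOR} each $z_n(t)$ lies in a fixed bounded subset of $\H^1_t$, uniformly in $t$ and $n$; hence $w_n$ is bounded in $L^\infty(\R;\h^2)$ and $\partial_t w_n$ is bounded in $L^\infty(\R;\h^1)$. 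Differentiating the equation (which is legitimate by Remark~\ref{NEWEQ} and the extra regularity) one also gets a uniform bound on $\partial_{tt}w_n$ in $L^\infty(\R;\h)$, or at least in $L^\infty$ of a lower-order space, enough to invoke Aubin--Lions on each compact interval $[-T,T]$. This yields a subsequence and a limit $\hat u$ with $w_n\to\hat u$ strongly in $\C([-T,T];\h^1)$ and $\partial_t w_n\to\partial_t\hat u$ strongly in $\C([-T,T];\h)$, which is exactly the convergence \eqref{conv-cc}; a diagonal argument over $T\in\N$ makes it hold for every $T$.

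Next I would identify the limit equation. Writing \eqref{mff} for $w_n$ at time $t+t_n$ and testing with $\phi\in\h^1$, all the ``local'' terms $\langle\partial_{tt}w_n,\phi\rangle$, $\langle w_n,\phi\rangle_1$, $\langle f(w_n),\phi\rangle$ pass to the limit by the strong convergences above together with the growth bound \eqref{hp1-f}. The only delicate piece is the memory contribution
$$
\big[1+k_{t+t_n}(0)\big]\langle w_n(t),\phi\rangle_1+\int_0^\infty k_{t+t_n}'(s)\langle w_n(t-s),\phi\rangle_1\,\d s,
$$
which I would rewrite, via the integration by parts \eqref{FIBP}, as $\langle w_n(t),\phi\rangle_1+\int_0^\infty k_{t+t_n}(s)\langle \partial_t w_n(t-s),\phi\rangle_1\,\d s$. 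For the latter integral I would split $\int_0^\infty=\int_0^\eps+\int_\eps^\infty$: on $[\eps,\infty)$ the hypothesis \eqref{deltaconv} forces $\int_\eps^\infty k_{t+t_n}(s)\,\d s\to0$ while $\partial_t w_n$ is uniformly bounded in $\h$, so that part vanishes; on $[0,\eps]$, using $\partial_t w_n(t-s)=\partial_t w_n(t)-\int_{t-s}^t\partial_{tt}w_n(\sigma)\,\d\sigma$ and the uniform bound on $\partial_{tt}w_n$, the integral is $k_{t+t_n}(0)\,\langle\partial_t w_n(t),\phi\rangle_1$ up to an error controlled by $\eps$ times a constant, uniformly in $n$ (here the uniform-in-$t$ regularity is essential). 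Since $\int_0^\eps k_{t+t_n}(s)\,\d s\to m$ by \eqref{deltaconv} and $\partial_t w_n(t)\to\partial_t\hat u(t)$ in $\h$, letting $n\to\infty$ and then $\eps\to0$ gives $m\,\langle\partial_t\hat u(t),\phi\rangle_1$; this shows $\hat u$ is a weak solution of \eqref{SDWE} on every $[-T,T]$, hence on all of $\R$.

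It then remains to check that $(\hat u,\partial_t\hat u)$ is a genuine {\cbt} of $S(t)$. Boundedness $\sup_{t\in\R}\|(\hat u(t),\partial_t\hat u(t))\|_{\h^1\times\h}<\infty$ is inherited from the uniform-in-$n$, uniform-in-$t$ bounds on the $w_n$'s; and the forward identity $\hat z(t+\tau)=S(t)\hat z(\tau)$ follows because $\hat u$ solves \eqref{SDWE} with the initial data $(\hat u(\tau),\partial_t\hat u(\tau))$ and $S(t)$ is the (unique) solution semigroup of \eqref{SDWE}, so by uniqueness the trajectory coincides with $S(\cdot)\hat z(\tau)$. I expect the main obstacle to be the memory-term passage, and specifically making the ``concentration at $s=0$'' rigorous: the pointwise convergence \eqref{deltaconv} of $k_t(s)\,\d s$ to $m\delta_0$ does not by itself let one integrate a generic $\h$-valued function of $s$ against it, so one genuinely needs the equicontinuity in time of $s\mapsto\partial_t w_n(t-s)$ — i.e.\ the uniform $\partial_{tt}$ bound coming from the $\H^1_t$-regularity of the attractor — to absorb the near-zero part. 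A secondary technical point is justifying the differentiation of \eqref{mff2} in time to get that bound, which should follow along the lines of the higher-order estimates already used for Theorem~\ref{regMAIN-ATTRACTOR}, applied now to the {\cbt} rather than to a general trajectory.
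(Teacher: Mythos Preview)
Your outline is correct and follows the same overall architecture as the paper: shift, extract a limit by compactness from the $\H^1_t$ regularity of the attractor, and pass to the limit in the memory term. Two tactical differences are worth noting. First, you propose to obtain the $\partial_{tt}w_n$ bound by differentiating the equation; the paper instead reads it off directly by comparison in \eqref{mff2n}, using only that $\int_0^\infty k_{t+t_n}(s)\,\d s\leq 2m$ for large $n$. This gives $\partial_{tt}w_n$ bounded in $L^\infty(-T,T;\h^{-1})$, not in $\h$ as you tentatively claim (the memory term $\int k_{t+t_n}(s)A\partial_t w_n(t-s)\,\d s$ lands only in $\h^{-1}$ with the available regularity $\partial_t w_n\in\h^1$), and that is already enough for Simon--Aubin. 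Second, for the memory convergence the paper does not invoke equicontinuity of $\varphi_n=\l\partial_t w_n,A\phi\r$ via a $\partial_{tt}$ bound as you do; it splits at $s=1$ and writes $\varphi_n=\varphi+(\varphi_n-\varphi)$ on $[0,1]$, letting the uniform convergence $\varphi_n\to\varphi$ in $\C([t-1,t])$ kill the difference and the weak convergence $k_{t+t_n}(s)\,\d s\to m\delta_0$ handle the continuous limit $\varphi$. Your route also works (with the obvious correction that the main contribution on $[0,\eps]$ is $\big(\int_0^\eps k_{t+t_n}(s)\,\d s\big)\l\partial_t w_n(t),\phi\r_1$, not $k_{t+t_n}(0)$ times that), but it forces you to take $\phi$ regular enough that $\l\partial_{tt}w_n,\phi\r_1$ makes sense with $\partial_{tt}w_n\in\h^{-1}$, i.e.\ $\phi\in\h^3$, and then conclude by density; the paper's splitting sidesteps this by never pairing $\partial_{tt}w_n$ against anything in the memory analysis.
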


Defining the canonical projection from $\H_t$ onto $\h^1\times \h$ by
${\mathbb P}_t (u,v,\eta)=(u,v)$, the theorem above produces an immediate corollary.

\begin{corollary}
If \eqref{deltaconv} holds, then we have the convergence
$$\lim_{t\to\infty}\Big[\di_{\h^2\times \h^1}\big({\mathbb P}_t A_t,\hat A\big)\Big]=0.$$
\end{corollary}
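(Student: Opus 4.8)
The plan is to argue by contradiction, feeding Theorem~\ref{limitece} with trajectories running through the attractor. Suppose the assertion fails; then there are $\eps>0$ and a sequence $t_n\to\infty$ with $\di_{\h^2\times\h^1}(\mathbb P_{t_n}A_{t_n},\hat A)\geq 2\eps$, so one can select points $x_n=(a_n,b_n)\in\mathbb P_{t_n}A_{t_n}$ with $\|x_n-y\|_{\h^2\times\h^1}\geq\eps$ for every $y\in\hat A$. By the characterization of the invariant time-dependent global attractor as the set of {\cbt} of $U(t,\tau)$ recalled after Theorem~\ref{MAIN-ATTRACTOR}, each $x_n$ is attained along a {\cbt}: there exists a {\cbt} $z_n=(u_n,\pt u_n,\eta_n)$ of $U(t,\tau)$ with $\mathbb P_{t_n}z_n(t_n)=(u_n(t_n),\pt u_n(t_n))=x_n$.

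Next I would apply Theorem~\ref{limitece} to the sequence $(z_n)$ of {\cbt} and to $t_n\to\infty$: passing to a subsequence, there is a {\cbt} $(\hat u,\pt\hat u)$ of $S(t)$ for which \eqref{conv-cc} holds for every $T>0$. Evaluating \eqref{conv-cc} at $t=0$ gives $x_n\to\hat z(0):=(\hat u(0),\pt\hat u(0))$ in $\h^1\times\h$; moreover $\hat z(0)\in\hat A$, being the value at a time of a {\cbt} of $S(t)$. If this convergence were available in the stronger norm of $\h^2\times\h^1$, we would immediately get $\|x_n-\hat z(0)\|_{\h^2\times\h^1}\to0$ with $\hat z(0)\in\hat A$, against the choice of $x_n$, and the proof would be complete.

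Hence the real task — and the main obstacle — is to promote the convergence at $t=0$ from $\h^1\times\h$ to $\h^2\times\h^1$. The starting point is Theorem~\ref{regMAIN-ATTRACTOR}: every {\cbt} takes values in $A_t$, so it is uniformly bounded in $\H^1_t$, and the shifted displacements $w_n(t):=u_n(t+t_n)$ satisfy, uniformly in $n$, $w_n\in L^\infty(\R;\h^2)$ and $\pt w_n\in L^\infty(\R;\h^1)$, while by Remark~\ref{NEWEQ} they solve the shifted form of \eqref{mff2} on the whole line. On a fixed interval, say $[-1,0]$, I would then compare $w_n$ with the Kelvin--Voigt trajectory $\hat u$ as in Theorem~\ref{limitece}, but now at the level of the regular energy of $q_n:=w_n-\hat u$: adding and subtracting $mA\pt w_n$, the function $q_n$ solves a strongly damped wave equation whose forcing is $mA\pt w_n-\int_0^\infty k_{t+t_n}(s)A\pt w_n(t-s)\d s$ together with $f(w_n)-f(\hat u)$. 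The datum $q_n(-1)$ is uniformly bounded and tends to zero in $\h^1\times\h$; the nonlinear term is controlled by the $\h^1$-convergence of $w_n$ and the growth \eqref{hp1-f}; and the damping discrepancy is made small by the concentration \eqref{deltaconv} of the kernels. The genuine difficulty is that this discrepancy lies one order of regularity below what a plain $\h^2\times\h^1$-energy estimate would accommodate — precisely the obstruction met in the proof of Theorem~\ref{regMAIN-ATTRACTOR} — so, to absorb it, one must re-employ the devices developed there: the splitting of the solution into an $\H_t$-decaying part plus a part compact in the regular norm, and the Gronwall-type lemma in integral form of Appendix~B. This should give $\|q_n(0)\|_{\h^2}+\|\pt q_n(0)\|_{\h^1}\to0$, that is $x_n\to\hat z(0)$ in $\h^2\times\h^1$, which is the contradiction sought.

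In brief, I expect everything but the regular-level comparison of the previous paragraph to be routine: the soft part is the contradiction scheme, while the hard part is gaining the extra half-derivative in the convergence, which calls for combining the parabolic regularizing effect of \eqref{SDWE} with the attractor-regularity machinery already at our disposal.
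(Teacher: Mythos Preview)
Your contradiction scheme is exactly how the paper argues, only the paper is terser: it says that for every $t_n\to\infty$, applying Theorem~\ref{limitece} to {\cbt}s passing through points of $A_{t_n}$ shows (up to a subsequence) that $\di_{\h^2\times\h^1}(\mathbb P_{t_n}A_{t_n},\hat A)\to 0$, and this subsequence principle gives the full limit. That is the whole proof in the paper --- one sentence.

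Where you diverge is in flagging the norm mismatch: \eqref{conv-cc} only delivers convergence of $(u_n(t_n),\pt u_n(t_n))$ to $(\hat u(0),\pt\hat u(0))\in\hat A$ in $\h^1\times\h$, whereas the Corollary is stated with the $\h^2\times\h^1$ semidistance. The paper does not address this discrepancy at all; it simply asserts that \eqref{conv-cc} yields the $\h^2\times\h^1$ convergence. So either the displayed norm in the Corollary is a slip for $\h^1\times\h$ (in which case the paper's one-line proof is complete and your additional work is unnecessary), or the authors have overlooked precisely the gap you describe. Your instinct that boundedness in $\h^2\times\h^1$ plus convergence in $\h^1\times\h$ does not by itself force strong convergence in $\h^2\times\h^1$ is correct; one only gets weak convergence there and strong convergence in intermediate spaces by interpolation.

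If the stronger statement is truly intended, your outline for closing the gap --- comparing the shifted {\cbt} with the limit Kelvin--Voigt trajectory at the level of the $\h^2\times\h^1$ energy, exploiting the smoothing of the strong damping together with the decomposition and integral-Gronwall machinery of Section~\ref{SREG} and Appendix~B --- is a sensible plan, though you have only sketched it and the control of the ``damping discrepancy'' term $mA\pt w_n-\int_0^\infty k_{t+t_n}(s)A\pt w_n(t-s)\,\d s$ at the regular level would need to be made precise. In short: the soft part of your argument matches the paper, and the hard part you propose goes beyond what the paper actually supplies.
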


Indeed, \eqref{conv-cc} says in particular that for every $t_n\to\infty$
the convergence
$$\di_{\h^2\times \h^1}\big({\mathbb P}_{t_n} A_{t_n},\hat A\big)\to 0$$
holds (up to a subsequence) as $n\to\infty$. This is clearly enough to draw the desired conclusion.

\medskip
The proofs of the results stated above will be carried out in the next Sections \ref{SSAF}-\ref{SKV}.

\subsection*{A word of warning}
In the forthcoming proofs, we will denote by $C$ and $\Q$ a {\it generic} positive constant
and a {\it generic} increasing positive function, respectively, both independent of $t\geq \tau$.
We will use several times (possibly without explicit mention)
the Young, H\"older and
Poincar\'e inequalities,
as well as the standard Sobolev embeddings, such as  $\h^1\subset L^6(\Omega)$.
Besides, we will perform several energy estimates, which are rigorously justified within
the Galerkin approximation scheme detailed in \cite{Timex}.

\section{Some Auxiliary Functionals}
\label{SSAF}

\noindent
As we said in the Introduction, one of the main technical difficulties inherent in
the formulation of our problem is that we do not (cannot) have a differential
equation ruling the evolution of the additional variable $\eta^t$. As a direct consequence,
we are unable to draw directly differential inequalities,
essential to produce any kind of energy estimates. The strategy to overcome this obstacle
is to produce suitable integral inequalities, that would hold {\it if we had}
suitable differential estimates (which we don't).
To this end,
let $\tau\in\R$ be fixed and let $(p_\tau,q_\tau,\psi_\tau)\in \H_\tau$ be a sufficiently
regular initial datum.
We consider for $t>\tau$ the equation
\begin{equation}
\label{sis-p-psi}
\ptt p(t) + Ap(t) + \ds{\int_0^\infty \mu_t(s) A\psi^t(s)\d s}+ \gamma(t) = 0.
\end{equation}
Here, $\gamma$ is a certain forcing term (possibly depending on $p$), while
\begin{equation}
\label{rep-p-psi}
\psi^t(s) =
\begin{cases}
p(t) - p(t-s),                     &s \leq t- \tau,\\
\psi_{\tau}(s-t+\tau) + p(t) - p_\tau, &s > t -\tau.
\end{cases}
\end{equation}
The equation is supplemented with the initial conditions
\begin{equation}
\label{in-cond-p}
\begin{cases}
p(\tau)=p_\tau,\\
\pt p(\tau)=q_\tau,\\
\psi^\tau=\psi_\tau.
\end{cases}
\end{equation}
Assuming that \eqref{sis-p-psi}-\eqref{in-cond-p} admits a sufficiently regular global solution
$$(p(t),\pt p(t),\psi^t)\in \H_t$$
on $[\tau,\infty)$, we establish some crucial integral inequalities
involving the triplet $(p,\pt p,\psi)$. The first one, implied by
\eqref{rep-p-psi}-\eqref{in-cond-p} only, comes from \cite[Section 5]{Timex}.

\begin{lemma}
\label{lemma-eta-norm}
For all $\sigma\in [0,1]$ and every $b>a\geq\tau$,
we have
$$
\|\psi^b\|^2_{\M_b^{\sigma}}
-\int_a^b\!\!\int_0^\infty\big[\dot\mu_t(s)+\mu'_t(s)\big]\|\psi^t(s)\|^2_{\sigma+1}\d s\,\d t
\leq \|\psi^a\|^2_{\M_a^{\sigma}}
+2\int_a^b\l\pt p(t),\psi^t\r_{\M_t^{\sigma}}\d t.
$$
\end{lemma}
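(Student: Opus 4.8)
The plan is to mimic, at the integral level, the standard ``energy identity'' for the memory component that one would obtain by multiplying a differential equation for $\psi^t$ by $\psi^t$ in $\M_t^\sigma$. Since we do not have such an equation, I would instead work directly from the representation formula \eqref{rep-p-psi}, splitting the integral over the internal variable $s$ into the two regimes $s\le t-\tau$ and $s>t-\tau$, and compute the time derivative of $t\mapsto\|\psi^t\|^2_{\M_t^\sigma}$ in a weak/a.e.\ sense. Concretely, I would first establish the pointwise-in-time differential inequality
$$
\ddt\|\psi^t\|^2_{\M_t^{\sigma}}
\leq \int_0^\infty\big[\dot\mu_t(s)+\mu'_t(s)\big]\|\psi^t(s)\|^2_{\sigma+1}\d s
+2\l\pt p(t),\psi^t\r_{\M_t^{\sigma}},
$$
and then integrate it over $[a,b]$ to get the claimed inequality. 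The gain in $\dot\mu_t+\mu'_t$ (rather than an equality with extra terms) is exactly what one expects: the $\dot\mu_t$ piece records the $t$-dependence of the weight in the memory space, while the $\mu'_t$ piece is the dissipative contribution of translation in $s$, and the boundary term at $s=t-\tau$ has a favorable sign (it is essentially $-\mu_t(t-\tau)\|\psi_\tau(0^+)\|^2_{\sigma+1}$-type, or simply vanishes because $\psi^t$ is continuous across $s=t-\tau$ with value $p(t)-p(t-\tau)\to p(t)-p_\tau$).

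For the actual computation I would differentiate under the integral sign. Writing
$$
\|\psi^t\|^2_{\M_t^{\sigma}}=\int_0^{t-\tau}\mu_t(s)\|p(t)-p(t-s)\|^2_{\sigma+1}\d s
+\int_{t-\tau}^\infty\mu_t(s)\|\psi_\tau(s-t+\tau)+p(t)-p_\tau\|^2_{\sigma+1}\d s,
$$
the $t$-derivative produces: (a) the terms $\int\dot\mu_t(s)\|\psi^t(s)\|^2_{\sigma+1}\d s$ from differentiating the weight (legitimate by {\bf (M3)}); (b) the Leibniz boundary terms at $s=t-\tau$ from the two integrals, which cancel each other by continuity of $\psi^t(\cdot)$ at $s=t-\tau$; (c) in the first integral, the term from differentiating $p(t)-p(t-s)$ in $t$, which after the change of variable $s\mapsto t-s$ and an integration by parts in $s$ yields $\int_0^{t-\tau}\mu_t'(s)\|\psi^t(s)\|^2_{\sigma+1}\d s$ plus a boundary term at $s=t-\tau$ plus the cross term $2\int_0^{t-\tau}\mu_t(s)\l\pt p(t),\psi^t(s)\r_{\sigma+1}\d s$; (d) in the second integral, differentiating $\psi_\tau(s-t+\tau)$ in $t$ gives $-\psi_\tau'$, which again integrates by parts in $s$ to $\int_{t-\tau}^\infty\mu_t'(s)\|\psi^t(s)\|^2_{\sigma+1}\d s$ up to boundary terms, together with the cross term $2\int_{t-\tau}^\infty\mu_t(s)\l\pt p(t),\psi^t(s)\r_{\sigma+1}\d s$. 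Collecting (c) and (d) gives the full $\int_0^\infty\mu_t'(s)\|\psi^t(s)\|^2_{\sigma+1}\d s$ and the full cross term $2\l\pt p(t),\psi^t\r_{\M_t^\sigma}$, while the leftover boundary contributions at $s=t-\tau$ and at $s=0,\infty$ combine with sign $\le 0$ (using $\mu_t\ge0$, summability, and $\mu_t(s)\|\psi^t(s)\|^2\to0$ as $s\to\infty$, which follows from $\psi^t\in\M_t^\sigma$). This is precisely the differential inequality above.

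I expect the main obstacle to be \emph{rigor in the differentiation step}: $\psi^t$ is only a weak solution, $p$ is not classically differentiable in the required topology, and the change of variables plus integration by parts in $s$ has to be justified — in particular the boundary term at $s=t-\tau$, where $\psi_\tau$ need only be an $L^2_{\mu_\tau}$ function and may not have a well-defined trace. The standard remedy, which I would adopt (and which is presumably why the lemma is quoted from \cite[Section 5]{Timex}), is to carry out the computation on smooth data within the Galerkin scheme — for regular $\psi_\tau$ everything above is legitimate and the boundary term at $s=t-\tau$ equals $-\mu_t(t-\tau)\|p(t)-p(t-\tau)\|^2_{\sigma+1}\le 0$ — and then pass to the limit, using that the inequality is stable under the weak convergence of the approximations and that $K_\tau(\cdot)$ from {\bf (M2)} controls the memory norms uniformly. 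A secondary technical point is handling $\sigma\in[0,1]$ uniformly, but since the identity is linear in $\psi^t$ and only uses the $\h^{\sigma+1}$-norm of its values, the argument is insensitive to $\sigma$.
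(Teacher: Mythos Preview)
The paper does not supply its own proof of this lemma; it simply quotes the result from \cite[Section~5]{Timex}, the companion paper. Your proposed strategy---differentiating $t\mapsto\|\psi^t\|^2_{\M_t^\sigma}$ directly from the representation formula \eqref{rep-p-psi}, extracting the $\dot\mu_t+\mu'_t$ contribution, isolating the cross term $2\l\pt p,\psi^t\r_{\M_t^\sigma}$, and justifying everything on smooth Galerkin data before passing to the limit---is the natural one and is essentially what \cite{Timex} does.

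One small correction to your bookkeeping: for regular data with $\psi_\tau(0)=0$, the boundary contributions at $s=t-\tau$ do not produce a strictly negative term of the form $-\mu_t(t-\tau)\|p(t)-p(t-\tau)\|^2_{\sigma+1}$; they cancel \emph{exactly}. The cleanest way to see this is to perform the change of variables $y=t-s$ in the first piece and $r=s-t+\tau$ in the second \emph{before} differentiating in $t$. Then no interior boundary at $s=t-\tau$ ever appears, the Leibniz term at $y=t$ in the first piece is $\mu_t(0)\|p(t)-p(t)\|^2_{\sigma+1}=0$, and one obtains directly
$$
\ddt\|\psi^t\|^2_{\M_t^\sigma}=\int_0^\infty[\dot\mu_t(s)+\mu'_t(s)]\|\psi^t(s)\|^2_{\sigma+1}\d s+2\l\pt p(t),\psi^t\r_{\M_t^\sigma}
$$
as an \emph{equality} on smooth data. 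The inequality form in the statement is simply what survives the passage to the limit (and accommodates the possibly unbounded $\mu_t(0)$ in the more general setting of \cite{Timex}, where {\bf (M7)} is not assumed). This harmless imprecision aside, your outline is sound.
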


\begin{remark}
\label{int-eta}
On account of {\bf (M4)}, we deduce
in particular the integrability on $[a,b]$ of the map
$$t\mapsto \kappa(t)\|\psi^t\|^2_{\M_t^{\sigma}}.$$
\end{remark}

In the next two lemmas we state two further integral inequalities, in terms of
the auxiliary functionals
\begin{align}
\label{PHI}
\Phi(t) &= 2\l p(t),\pt p(t) \r, \\
\label{PSI}
\Psi(t) &= -\frac{2}{\kappa(t)}
\int_0^\infty\mu_t(s)\l\psi^t(s),\pt p(t)\r\d s,
\end{align}
which are easily seen to satisfy the estimate
\begin{equation}
\label{contr-Phi-Psi}
|\Phi(t)| + |\Psi(t)|
\leq C\big[\|p(t)\|^2_1 + \|\pt p(t)\|^2 + \|\psi^t\|^2_{\M_t}\big].
\end{equation}
Indeed, the less obvious control of $|\Psi|$ follows from the H\"older-type inequality
(which will actually occur several times in the forthcoming calculations)
$$\int_0^\infty \mu_t(s)\|\psi^t(s)\|_1\d s
\leq \sqrt{\kappa(t)}\|\psi^t\|_{\M_t},
$$
together with an application of {\bf (M5)}.

\begin{lemma}
\label{lemma-Phi-inc}
For every $b>a\geq\tau$ and every $\varpi \in (0,1]$, the functional $\Phi$ satisfies
\begin{align*}
\Phi(b) +
(2-\varpi)\int_a^b\|p(t) \|^2_1\d t
&\leq \Phi(a)+2\int_a^b\| \pt p(t) \|^2\d t\\
&\quad + \frac1\varpi\int_a^b\kappa(t)\| \psi^t \|^2_{\M_t}\d t
- 2\int_a^b\l\gamma(t),p(t)\r\d t.
\end{align*}
\end{lemma}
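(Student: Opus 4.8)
The plan is to multiply the evolution equation \eqref{sis-p-psi} by $2p(t)$ in $\h$ and integrate on $[a,b]$, treating the memory term carefully. Testing \eqref{sis-p-psi} with $2p(t)$ gives the pointwise identity
$$\ddt\big[2\l p(t),\pt p(t)\r\big] - 2\|\pt p(t)\|^2 + 2\|p(t)\|^2_1 + 2\int_0^\infty\mu_t(s)\l\psi^t(s),p(t)\r_1\d s + 2\l\gamma(t),p(t)\r = 0,$$
which, with the notation \eqref{PHI}, reads $\ddt\Phi(t) = 2\|\pt p(t)\|^2 - 2\|p(t)\|^2_1 - 2\int_0^\infty\mu_t(s)\l\psi^t(s),p(t)\r_1\d s - 2\l\gamma(t),p(t)\r$. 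Integrating over $[a,b]$ produces $\Phi(b)-\Phi(a)$ on the left, the first three terms of the right-hand side of the claimed inequality (up to sign), and the memory cross-term, which is the only term that needs estimating.

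The heart of the matter is thus the estimate of $\ds\int_a^b\!\!\int_0^\infty\mu_t(s)\l\psi^t(s),p(t)\r_1\d s\,\d t$ from below (equivalently, controlling its absolute value). Here I would use the Cauchy--Schwarz inequality in the weighted space $\M_t$ in the form already quoted in the excerpt, $\int_0^\infty\mu_t(s)\|\psi^t(s)\|_1\d s\leq\sqrt{\kappa(t)}\,\|\psi^t\|_{\M_t}$, together with $\|p(t)\|_1\leq\|p(t)\|_1$ to get
$$\Big|\int_0^\infty\mu_t(s)\l\psi^t(s),p(t)\r_1\d s\Big|\leq \sqrt{\kappa(t)}\,\|\psi^t\|_{\M_t}\,\|p(t)\|_1\leq \frac{\varpi}{2}\|p(t)\|^2_1 + \frac{1}{2\varpi}\kappa(t)\|\psi^t\|^2_{\M_t},$$
the last step being Young's inequality with the parameter $\varpi\in(0,1]$ chosen precisely to match the statement. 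Multiplying by $2$ and integrating, the $\varpi\|p(t)\|^2_1$ contribution is absorbed into the coefficient of $\int_a^b\|p(t)\|^2_1\d t$, downgrading its coefficient from $2$ to $2-\varpi$, while the $\frac1\varpi\kappa(t)\|\psi^t\|^2_{\M_t}$ term appears on the right as claimed; the forcing term $-2\int_a^b\l\gamma(t),p(t)\r\d t$ is simply carried along unchanged.

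The main (mild) obstacle is not any single estimate but the justification of the computations: the identity obtained by testing with $2p(t)$ is only formal for general weak solutions, since it requires $\pt p\in\h^1$ and enough regularity of $\psi^t$ to make sense of $\int_0^\infty\mu_t(s)\l\psi^t(s),p(t)\r_1\d s$ and of the time-derivative of $\Phi$. As the ``word of warning'' already flags, these manipulations are to be read as carried out on the Galerkin approximants of \cite{Timex}, where $p$, $\pt p$ and $\psi^t$ are smooth, the identity is genuine, and then one passes to the limit using lower semicontinuity of the norms on the left and the a priori bounds; the integrability needed to pass to the limit in the memory term on the right is exactly the one recorded in Remark~\ref{int-eta}. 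No differential inequality for $\psi^t$ is invoked anywhere, which is consistent with the stated philosophy of the section — only \eqref{sis-p-psi} itself and the elementary bound \eqref{contr-Phi-Psi} on $\Phi$ are used.
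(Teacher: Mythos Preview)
Your proof is correct and follows exactly the paper's approach: test \eqref{sis-p-psi} with $2p$, obtain the differential identity for $\Phi$, estimate the memory cross-term via Cauchy--Schwarz plus Young with parameter $\varpi$, and integrate on $[a,b]$. Your additional remarks on the Galerkin justification and the role of Remark~\ref{int-eta} are appropriate and more explicit than the paper's own (terse) version.
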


\begin{lemma}
\label{lemma-Psi}
For every $b>a\geq\tau$ and every $\varpi \in (0,1]$,
the functional $\Psi$ satisfies
$$\begin{aligned}
\Psi(b) + \int_a^b\|\pt p(t)\|^2\d t
& \leq \Psi(a) -M\int_a^b
\int_0^\infty[\dot\mu_t(s)+\mu_t'(s)]
\|\psi^t(s)\|^2_1\d s\,\d t\\
&\quad + \varpi\int_a^b\| p(t) \|^2_1\d t
+\frac{C}{\varpi}\int_a^b\kappa(t)\|\psi^t \|^2_{\M_t}\d t\\
&\quad+\int_a^b\frac{2}{\kappa(t)}
\int_0^\infty\mu_t(s)\l\psi^t(s),\gamma(t)\r\d s\,\d t.
\end{aligned}$$
The positive constants $M$ and $C$ depend only on the structural assumptions on the memory kernel.
\end{lemma}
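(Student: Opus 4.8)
The plan is to prove Lemma~\ref{lemma-Psi} by differentiating $\Psi(t)$ formally along solutions of \eqref{sis-p-psi}, treating the memory term carefully since we do not possess a genuine differential equation for $\psi^t$. First I would compute
$$\dds\Psi(t) = \frac{\dot\kappa(t)}{\kappa(t)}\Psi(t)
-\frac{2}{\kappa(t)}\int_0^\infty \dot\mu_t(s)\l\psi^t(s),\pt p(t)\r\d s
-\frac{2}{\kappa(t)}\int_0^\infty \mu_t(s)\l\ps\psi^t(s),\pt p(t)\r\d s
-\frac{2}{\kappa(t)}\int_0^\infty \mu_t(s)\l\psi^t(s),\ptt p(t)\r\d s,$$
where the appearance of $\ps\psi^t$ in place of $\pt\psi^t$ is justified because, by the representation formula \eqref{rep-p-psi}, one has $\pt\psi^t(s)=\pt p(t)-\ps\psi^t(s)$ in the relevant regime; the term $\pt p(t)\int_0^\infty\mu_t(s)\l\psi^t(s),\cdot\r$ contributes the cross term. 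Substituting \eqref{sis-p-psi} for $\ptt p(t)$ replaces the last integral by terms involving $Ap(t)$, the double memory integral $\int\int\mu_t\mu_t\l A\psi,\psi\r$, and the forcing $\gamma(t)$; the $A p(t)$ term produces, after the H\"older-type inequality $\int_0^\infty\mu_t(s)\|\psi^t(s)\|_1\d s\leq\sqrt{\kappa(t)}\|\psi^t\|_{\M_t}$ and Young's inequality with parameter $\varpi$, the contributions $\varpi\|p(t)\|_1^2$ and $\tfrac{C}{\varpi}\kappa(t)\|\psi^t\|_{\M_t}^2$.

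The crucial point is extracting the coercive term $+\|\pt p(t)\|^2$ on the left. This comes from the integral $-\tfrac{2}{\kappa(t)}\int_0^\infty\mu_t(s)\l\pt\psi^t(s),\pt p(t)\r\d s$: writing $\pt\psi^t(s)=\pt p(t)-\ps\psi^t(s)$ (in the main regime $s\leq t-\tau$, and with a boundary correction otherwise), the piece $-\tfrac{2}{\kappa(t)}\l\pt p(t),\pt p(t)\r\int_0^\infty\mu_t(s)\d s = -2\|\pt p(t)\|^2$ is exactly what we need, after moving it to the left-hand side. The remaining piece $+\tfrac{2}{\kappa(t)}\int_0^\infty\mu_t(s)\l\ps\psi^t(s),\pt p(t)\r\d s$ must then be re-absorbed: I would integrate by parts in $s$ in a companion manipulation, or more directly estimate it together with the $\dot\mu_t$ term, using {\bf (M3)}, {\bf (M6)} and {\bf (M7)} to control $\mu_t(0)$ and $\int|\dot\mu_t|\d s$ in terms of $[\kappa(t)]^2$, so that these pieces are dominated by $C\kappa(t)\|\psi^t\|_{\M_t}^2$ plus a small multiple of $\|\pt p(t)\|^2$ which can be kept on the left with coefficient $1$ rather than $2$. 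The term $-M\int_a^b\int_0^\infty[\dot\mu_t(s)+\mu_t'(s)]\|\psi^t(s)\|_1^2\d s\,\d t$ on the right-hand side is the natural majorant (recall $\dot\mu_t+\mu_t'\leq 0$ by {\bf (M4)}, so this is a genuinely nonnegative quantity) absorbing the dissipative $\int\mu_t'\|\psi^t\|_1^2$ contribution that arises when one integrates by parts the $\ps\psi^t$ term against $\mu_t$.

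The honest difficulty is that none of this differentiation is legitimate as written: $\psi^t$ lives in the moving space $\M_t$ and $\pt\psi^t$ is not defined. The rigorous route is the one flagged in the ``word of warning'': carry out all computations within the Galerkin scheme of \cite{Timex}, where $\psi^t$ is a genuine smooth function of $(t,s)$ satisfying a bona fide transport equation $\pt\psi^t=-\ps\psi^t+\pt p$ on $(0,t-\tau)$ with the tail governed by the initial history, so that the identities above are exact at the approximate level; one then integrates in $t$ over $[a,b]$, estimates every term uniformly in the Galerkin index using {\bf (M2)}, {\bf (M3)}, {\bf (M5)}--{\bf (M7)}, and passes to the limit, the inequality being preserved because all the problematic terms appear with the correct sign or are controlled by lower-order quantities. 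The boundary-in-$s$ terms at $s=0$ (where $\psi^t(0)=0$) and the matching at $s=t-\tau$ between the two branches of \eqref{rep-p-psi} must be checked to contribute nonpositively or to be absorbable; this bookkeeping, rather than any single estimate, is where the length and technicality reside — which is presumably why the authors relegate the full argument to Appendix~A.
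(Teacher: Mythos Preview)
Your high-level plan---differentiate $\Psi$, substitute \eqref{sis-p-psi} for $\ptt p$, use the transport identity $\pt\psi=\pt p-\ps\psi$ to extract $-2\|\pt p\|^2$, and control the remainders via {\bf (M5)}--{\bf (M7)}---matches the paper's structure. But two points are genuinely off.

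First, you misidentify the origin of the $-M\int\int[\dot\mu_t+\mu_t']\|\psi\|_1^2$ term. Integrating by parts the $\ps\psi$ piece against $\mu_t$ produces the \emph{cross term} $-\int\mu_t'\l\psi,\pt p\r\d s$, not a quadratic $\int\mu_t'\|\psi\|_1^2\d s$. Combined with the $\dot\mu_t$ contribution, what you actually face is
\[
-\frac{2}{\kappa(t)}\int_0^\infty[\dot\mu_t(s)+\mu_t'(s)]\l\psi(s),\pt p\r\d s,
\]
and your claim that this is dominated by $C\kappa(t)\|\psi\|_{\M_t}^2$ plus a small multiple of $\|\pt p\|^2$ is false: there is no assumed upper bound of the form $-(\dot\mu_t+\mu_t')\leq C\kappa\mu_t$. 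The paper instead applies Young and then Cauchy--Schwarz with weight $-[\dot\mu_t+\mu_t']$, giving
\[
\frac{C}{[\kappa(t)]^2}\int_0^\infty -[\dot\mu_t+\mu_t']\d s\cdot\int_0^\infty -[\dot\mu_t+\mu_t']\|\psi\|_1^2\d s,
\]
and the first factor is bounded by {\bf (M6)}--{\bf (M7)} (via $\int|\dot\mu_t|\leq C\kappa^2$ and $\mu_t(0)\leq C\kappa^2$). This is precisely why the lemma states the $-M$ term separately from the $\frac{C}{\varpi}\kappa\|\psi\|_{\M_t}^2$ term: they are not interchangeable.

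Second, your rigorization plan relies only on spatial Galerkin, but the delicate issues live in the $s$-variable, not in the spatial one. Even with $p,\psi_\tau$ smooth, passing $\d/\d t$ through $\int_0^\infty\mu_t(s)\l\psi^t(s),\pt p\r\d s$ and integrating by parts in $s$ on an unbounded interval are not automatic. The paper's actual proof introduces a compactly supported cutoff $\phi_\eps(s)$, works with $\mu_t^\eps=\phi_\eps\mu_t$ so that all these manipulations become elementary, and then sends $\eps\to 0$ via dominated convergence. The residual ``commutator'' term carrying $\phi_\eps'$ is shown to vanish using {\bf (M8)} and the near-origin regularity of $\ps\psi^t$. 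This $s$-regularization, not the Galerkin approximation, is where the technical length resides.
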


The proof of Lemma \ref{lemma-Psi} is very technical and requires several approximation steps; it is therefore postponed in the
Appendix B in full detail.
On the contrary, the rather standard proof of the first lemma is reported here below for the reader's convenience.

\begin{proof}[Proof of Lemma \ref{lemma-Phi-inc}]
Multiplying equation \eqref{sis-p-psi} by $2p$ in
$\h$ we draw
$$
\ddt\Phi(t) - 2\|\pt p(t)\|^2
+ 2\|p(t)\|^2_1 + 2\l\gamma(t),p(t)\r
= - 2\int_0^\infty\mu_t(s)\l\psi^t(s),p(t)\r_1\d s.
$$
Estimating the right-hand side by
$$
- 2\int_0^\infty \mu_t(s)\l \psi(s), p (t)\r_1 \d s
\leq \varpi\|p(t)\|_1^2
+ \frac1\varpi\kappa(t)\|\psi^t\|^2_{\M_t},
$$
and integrating on $[a,b]$ the claim follows.
\end{proof}

Finally, we tailor the inequalities of Lemmas \ref{lemma-Phi-inc} and \ref{lemma-Psi}
to the special case where
$$
\gamma = f(p) - g,
$$
with $f$ and $g$ as in our problem.

\begin{lemma}
\label{lemma-Phi-Psi}
For the particular $\gamma$ above, the previous inequalities enhance to
\begin{align}
\label{p-mult-bis}
&\Phi(b) +
\bigg(1 + \frac\theta2\bigg)\int_a^b\|p(t) \|^2_1\d t
+ 2\int_a^b\l F(p(t)),1\r\d t - 2\int_a^b\l g,p(t)\r\d t \\
\notag
&\quad \leq \Phi(a)+2\int_a^b\| \pt p(t) \|^2\d t
+ \frac{2}{\theta}\int_a^b\kappa(t)\| \psi^t \|^2_{\M_t}\d t  + c_f(b-a),
\end{align}
with $\theta$ and $c_f$ given by \eqref{phi-funz2}, and
\begin{align}
\label{pt-mult-bis}
\Psi(b) + \int_a^b\|\pt p(t)\|^2\d t
&\leq \Psi(a)-M\int_a^b\int_0^\infty[\dot\mu_t(s)+\mu_t'(s)] \|\psi^t(s)\|^2_1\d s\,\d t\\
\notag
&\quad +2\varpi\int_a^b\| p(t) \|^2_1\d t+ \varpi\|g\|^2(b-a)\\
\notag
&\quad+\frac{1}{\varpi}\int_a^b \Q(\|p(t)\|_1)\kappa(t)\|\psi^t \|^2_{\M_t}\d t.
\end{align}
\end{lemma}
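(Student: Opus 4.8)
The plan is to derive Lemma~\ref{lemma-Phi-Psi} as a direct specialization of Lemmas~\ref{lemma-Phi-inc} and~\ref{lemma-Psi} to the choice $\gamma=f(p)-g$, absorbing the resulting forcing contributions by means of the structural assumptions \eqref{hp1-f} and \eqref{phi-funz2} on $f$, the Sobolev embedding $\h^1\subset L^6(\Omega)$, the uniform lower bound {\bf (M5)} on $\kappa(t)$, and Young's inequality. No new idea enters; the content is a careful bookkeeping of constants.

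For \eqref{p-mult-bis} I would start from Lemma~\ref{lemma-Phi-inc}, whose last term now reads $-2\int_a^b\l f(p(t)),p(t)\r\d t+2\int_a^b\l g,p(t)\r\d t$. Using \eqref{phi-funz2} in the form
$$-2\l f(p),p\r\leq -2\l F(p),1\r+(1-\theta)\|p\|_1^2+c_f,$$
I would move $2\int_a^b\l F(p(t)),1\r\d t$ and $-2\int_a^b\l g,p(t)\r\d t$ to the left-hand side, and absorb $(1-\theta)\int_a^b\|p(t)\|_1^2\d t$ into the coefficient $(2-\varpi)$ of $\int_a^b\|p(t)\|_1^2\d t$, which then becomes $1+\theta-\varpi$. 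The only choice to be made is $\varpi=\theta/2$: it turns this coefficient into $1+\theta/2$ and turns $1/\varpi$ into $2/\theta$, giving exactly \eqref{p-mult-bis}, with the term $c_f(b-a)$ inherited from the above use of \eqref{phi-funz2}.

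For \eqref{pt-mult-bis} I would start from Lemma~\ref{lemma-Psi} with the same $\gamma$, so that the extra quantity to be controlled is $\int_a^b\frac{2}{\kappa(t)}\int_0^\infty\mu_t(s)\l\psi^t(s),f(p(t))-g\r\d s\,\d t$. Two elementary ingredients are needed: first, by Poincar\'e and the H\"older-type inequality recalled right after \eqref{contr-Phi-Psi},
$$\int_0^\infty\mu_t(s)\|\psi^t(s)\|\,\d s\leq C\int_0^\infty\mu_t(s)\|\psi^t(s)\|_1\,\d s\leq C\sqrt{\kappa(t)}\,\|\psi^t\|_{\M_t};$$
second, \eqref{hp1-f} together with $f(0)=0$ gives the pointwise bound $|f(u)|\leq c\,|u|(1+|u|^2)$, whence, by H\"older's inequality and $\h^1\subset L^6(\Omega)$,
$$\|f(p(t))\|\leq \|p(t)\|_1\,\Q(\|p(t)\|_1).$$
Inserting these and applying Young's inequality, the $f(p)$-part is bounded by $\varpi\|p(t)\|_1^2+\frac{1}{\varpi\kappa(t)}\Q(\|p(t)\|_1)\|\psi^t\|_{\M_t}^2$ and the $g$-part by $\varpi\|g\|^2+\frac{C}{\varpi\kappa(t)}\|\psi^t\|_{\M_t}^2$. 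Finally, since {\bf (M5)} yields $\frac{1}{\kappa(t)}\leq\frac{\kappa(t)}{\kappa_0^2}$ with $\kappa_0=\inf_{t\in\R}\kappa(t)>0$, every $\|\psi^t\|_{\M_t}^2$-term — including the $\frac{C}{\varpi}\kappa(t)\|\psi^t\|_{\M_t}^2$ already present in Lemma~\ref{lemma-Psi} — may be merged into a single $\frac1\varpi\int_a^b\Q(\|p(t)\|_1)\kappa(t)\|\psi^t\|_{\M_t}^2\d t$; the new $\varpi\int_a^b\|p(t)\|_1^2\d t$ merges with the one from Lemma~\ref{lemma-Psi} into $2\varpi\int_a^b\|p(t)\|_1^2\d t$; and $\varpi\|g\|^2(b-a)$ is precisely the remaining term in \eqref{pt-mult-bis}.

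I do not expect a genuine obstacle; the points deserving attention are the bookkeeping of the generic constants hidden in $C$ and $\Q$, the specific choice $\varpi=\theta/2$ in the first inequality, and the repeated use of {\bf (M5)} to trade bare powers of $\kappa(t)$ so that all memory-norm terms end up carrying the common factor $\kappa(t)$.
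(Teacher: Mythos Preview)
Your proposal is correct and follows essentially the same route as the paper: for \eqref{p-mult-bis} the paper also applies Lemma~\ref{lemma-Phi-inc} with $\varpi=\theta/2$ together with \eqref{phi-funz2}, and for \eqref{pt-mult-bis} it likewise bounds $\|\gamma\|\leq \Q(\|p\|_1)\|p\|_1+\|g\|$ via \eqref{hp1-f}, uses the H\"older-type inequality $\int_0^\infty\mu_t(s)\|\psi^t(s)\|\d s\leq C\sqrt{\kappa(t)}\|\psi^t\|_{\M_t}$, applies {\bf (M5)} and Young, and substitutes into Lemma~\ref{lemma-Psi}. The only cosmetic difference is that the paper invokes {\bf (M5)} upfront to replace $2/\kappa(t)$ by a constant (so the factor $\kappa(t)$ appears directly after Young), whereas you carry $1/\kappa(t)$ and convert it via $1/\kappa(t)\leq \kappa(t)/\kappa_0^2$ at the end; the outcome is identical.
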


\begin{proof}
Inequality \eqref{p-mult-bis} follows from Lemma \ref{lemma-Phi-inc}
by choosing $\varpi=\theta/2$ and exploiting \eqref{phi-funz2}.
In order to prove \eqref{pt-mult-bis},
we first note that, by \eqref{hp1-f} together with $f(0)=0$,
$$\|\gamma\|\leq \|f(p)\|+\|g\|\leq \Q(\|p\|_1)\|p\|_1+\|g\|.
$$
Hence, making use of {\bf (M5)},
\begin{align*}
&\frac{2}{\kappa(t)}
\int_0^\infty\mu_t(s)\l\psi^t(s),\gamma(t)\r\d s\\
&\quad\leq C\|\gamma(t)\|\int_0^\infty\mu_t(s)
\|\psi^t(s)\|\d s \\
&\quad\leq C\big[\Q(\|p(t)\|_1)\|p(t)\|_1+\|g\|\big]\sqrt{\kappa(t)}\|\psi^t\|_{\M_t}\\
&\quad\leq \varpi\|p(t)\|^2_1+\varpi\|g\|^2
+\frac1\varpi \Q(\|p(t)\|_1)\kappa(t)\|\psi^t\|^2_{\M_t}.
\end{align*}
Substituting the result in Lemma \ref{lemma-Psi} we are done.
\end{proof}

\section{Dissipativity: Proof of Theorem \ref{MAIN-ABS}}
\label{SDIS}

\noindent
Throughout the section, let $\tau\in\R$ be an arbitrarily fixed initial time,
and let $z_\tau\in\H_\tau$ be an arbitrary initial datum for which
$$\EE(\tau,\tau)\leq R,\quad\text{for some given}\,R\geq 0.$$
We preliminarily show that the energy of the system remains bounded.

\begin{proposition}
\label{prop-bound}
For every $t \geq \tau$
we have the estimate
$$\EE(t,\tau)\leq \Q(R).$$
\end{proposition}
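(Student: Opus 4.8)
The plan is to run a standard multiplier/energy estimate for the ``full'' problem \eqref{AAA}--\eqref{CCC} (i.e.\ with $p=u$, $\psi=\eta$, $\gamma=f(u)-g$), combining the basic energy identity with the auxiliary functionals of Section~\ref{SSAF}, but keeping everything in integral form because there is no differential equation for $\eta^t$. First I would write down the basic energy relation obtained by multiplying \eqref{AAA} by $2\pt u(t)$ in $\h$: this produces, in integral form on $[\tau,t]$, the identity
\[
\|z(t)\|^2_{\H_t} + 2\l F(u(t)),1\r
- 2\int_\tau^t\!\!\int_0^\infty\big[\dot\mu_r(s)+\mu_r'(s)\big]\|\eta^r(s)\|^2_1\,\d s\,\d r
= \|z(\tau)\|^2_{\H_\tau} + 2\l F(u_\tau),1\r + 2\l g,u(t)-u_\tau\r,
\]
where the memory dissipation term is handled exactly as in Lemma~\ref{lemma-eta-norm} with $\sigma=0$ (using that $\dot\mu+\mu'\le 0$ by {\bf (M4)}, so this term has the right sign). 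By \eqref{phi-funz1} the quantity $\|u(t)\|^2_1 + 2\l F(u(t)),1\r$ controls $\theta\|u(t)\|^2_1 - c_f$ from below, and $2\l g,u(t)\r$ is absorbed by a small fraction of $\|u(t)\|^2_1$ via Young and Poincar\'e. Hence the basic identity already yields an \emph{a priori} bound of the form $\EE(t,\tau)\le C\,\EE(\tau,\tau) + \Q(\EE(\tau,\tau)) + C(1+t-\tau)$ — bounded on bounded time intervals but growing in $t-\tau$, which is not yet what we want.

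To upgrade this to a bound uniform in $t-\tau$ I would add a small multiple of $\Phi$ (Lemma~\ref{lemma-Phi-inc}/\eqref{p-mult-bis}) and a small multiple of $\Psi$ (Lemma~\ref{lemma-Psi}/\eqref{pt-mult-bis}) to the energy, forming a modified functional $\LL(t)=\EE(t,\tau) + \nu\Phi(t) + \nu'\Psi(t)$ with $\nu,\nu'>0$ small. Estimate \eqref{contr-Phi-Psi} guarantees that for $\nu,\nu'$ small enough $\LL(t)$ is equivalent to $\EE(t,\tau)$ (up to additive constants). Combining the three integral inequalities, the ``bad'' terms $+2\int\|\pt p\|^2$ from $\Phi$ are killed by the $+\int\|\pt p\|^2$ on the left of the $\Psi$-inequality (choosing $\nu'$ suitably larger than $\nu$), the terms $+\varpi\int\|p\|^2_1$ from $\Psi$ are killed by the $+(1+\theta/2)\int\|p\|^2_1$ from $\Phi$ for $\varpi$ small, and the terms $\int\kappa(t)\|\eta^t\|^2_{\M_t}$ appearing with a positive sign are controlled by the memory-dissipation term $-\int\!\int[\dot\mu+\mu']\|\eta^t\|^2_1$ via axiom {\bf (M4)}, which gives $-[\dot\mu_t(s)+\mu_t'(s)]\ge \delta\kappa(t)\mu_t(s)$ and hence $-\int\!\int[\dot\mu_t+\mu_t']\|\eta^t(s)\|^2_1\,\d s\,\d t \ge \delta\int\kappa(t)\|\eta^t\|^2_{\M_t}\,\d t$. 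The one genuinely nonlinear obstacle is the factor $\Q(\|p(t)\|_1)$ multiplying $\kappa(t)\|\eta^t\|^2_{\M_t}$ in \eqref{pt-mult-bis}: to absorb it one must already know that $\|u(t)\|_1$ stays bounded, which is exactly what the basic energy identity (localized on each interval, or used bootstrap-style) provides — on any interval where $\EE(\cdot,\tau)\le \Q(R)$, the factor $\Q(\|u(t)\|_1)$ is itself bounded by $\Q(R)$, and then $\delta$ (divided by $\Q(R)$, with $\nu'$ chosen accordingly) still dominates it.

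The final step is a standard continuation/bootstrap argument. Let me define $\bar t = \sup\{t\ge\tau : \EE(s,\tau)\le \Lambda \text{ for all } s\in[\tau,t]\}$ for a constant $\Lambda=\Q(R)$ to be fixed; by Theorem~\ref{thm-ex-un} and continuity in time $\bar t>\tau$. On $[\tau,\bar t]$ the nonlinear factor is controlled as above, so the combined inequality for $\LL$ reads, after dropping the good nonnegative terms on the left,
\[
\LL(t) \le \LL(\tau) + C_\Lambda\Big[(t-\tau) - \text{(controlled)}\Big] \le C\,\EE(\tau,\tau) + \Q(R),
\]
where now \emph{no} term grows with $t-\tau$ (all such terms have been absorbed). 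Using the equivalence of $\LL$ and $\EE$, this yields $\EE(t,\tau)\le \Q(R)$ on $[\tau,\bar t]$ with a constant strictly smaller than $\Lambda$ if $\Lambda$ was chosen large enough; hence $\bar t=\infty$ by the usual open-closed argument, and the Proposition follows. The main obstacle, as indicated, is orchestrating the coefficients $\nu,\nu',\varpi$ so that every sign works out simultaneously while keeping the nonlinear factor $\Q(\|u\|_1)$ under the \emph{a priori} bound — i.e.\ making the bootstrap self-consistent; everything else is routine Young/Poincar\'e/Sobolev bookkeeping of the kind the paper already flags.
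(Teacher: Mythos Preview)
Your very first step already \emph{is} the paper's proof, but you misread its conclusion. The identity you write down,
\[
\|z(t)\|^2_{\H_t} + 2\l F(u(t)),1\r - \text{(nonnegative dissipation)} \leq \|z_\tau\|^2_{\H_\tau} + 2\l F(u_\tau),1\r + 2\l g,u(t)-u_\tau\r,
\]
does \emph{not} produce any term growing like $C(1+t-\tau)$. Since $g$ is time-independent, the right-hand side involves only $2\l g,u(t)\r$ and $2\l g,u_\tau\r$; the first is bounded by $\frac{\theta}{2}\|u(t)\|_1^2 + C\|g\|^2$ via Young/Poincar\'e and absorbed into the left (using \eqref{phi-funz1} to get $\|u(t)\|_1^2 + 2\l F(u(t)),1\r \geq \theta\|u(t)\|_1^2 - c_f$), while the second is controlled by $\Q(R)$. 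So the basic energy identity alone gives $\EE(t,\tau)\le\Q(R)$ uniformly in $t$. Equivalently --- and this is how the paper phrases it --- the Lyapunov functional $\LL(t)=\|u\|_1^2+\|\pt u\|^2+2\l F(u),1\r-2\l g,u\r+\|\eta^t\|^2_{\M_t}$ is nonincreasing along trajectories, and the equivalence $\theta\EE - Q_0\le\LL\le\Q(\EE)$ finishes the proof in one line.

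Your subsequent machinery (adding $\nu\Phi+\nu'\Psi$, balancing coefficients, running a continuation argument to control the nonlinear factor $\Q(\|u\|_1)$) is essentially the proof of the \emph{stronger} Theorem~\ref{MAIN-ABS}, where one really does need $\Phi$ and $\Psi$ to manufacture damping. For Proposition~\ref{prop-bound} this is overkill: the paper's proof of Theorem~\ref{MAIN-ABS} actually \emph{invokes} Proposition~\ref{prop-bound} precisely to bound $\Q(\|u(t)\|_1)$ in \eqref{pt-mult-bis}, so importing that argument here forces the circularity you then try to resolve by bootstrap. Your continuation sketch is plausible but vague (the line ``$\LL(t)\le\LL(\tau)+C_\Lambda[(t-\tau)-\text{(controlled)}]$'' hides exactly the point where terms like $c_f(b-a)$ and $\varpi\|g\|^2(b-a)$ must be handled via the integral Gronwall lemma, not simply ``absorbed''). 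The fix is to drop all of it: your first identity, read correctly, is the whole proof.
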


\begin{proof}
For $t\geq\tau$, we define the (Lyapunov) functional
$$
\LL(t) = L(t)+\| \eta^t \|^2_{\M_t},
$$
where
$$L(t) = \|u(t)\|^2_1 + \|\pt u(t)\|^2 + 2\l F(u(t)),1\r - 2\l g,u(t)\r.$$
Exploiting the inequalities (the first one being a consequence of \eqref{hp1-f})
\begin{align*}
2\l F(u),1\r &\leq C\big(1+\|u\|_1^4\big),\\
2|\l g,u\r| &\leq \frac\theta2\|u\|^2_1 + C\|g\|^2,
\end{align*}
we deduce from \eqref{phi-funz1} that
\begin{equation}
\label{contr-E-Z}
\theta \EE(t,\tau)-Q_0\leq \LL(t)
\leq \Q(\EE(t,\tau)),
\end{equation}
for some $Q_0\geq 0$. In particular,
$$Q_0=0\qquad\text{if}\qquad g=0\,\,\,\text{and}\,\,\,c_f=0.$$
Testing  equation \eqref{AAA} with $\pt u$ and integrating on $[a,b]$, we get
$$L(b)+2\int_a^b\l\pt u(t),\eta^t\r_{\M_t}\d t = L(a),$$
for every $b> a\geq \tau$.
On the other hand, an application of Lemma \ref{lemma-eta-norm}
with $\sigma=0$ yields
$$
\|\eta^b\|^2_{\M_b}
-\int_a^b\!\!\int_0^\infty\big[\dot\mu_t(s)+\mu'_t(s)\big]\|\eta^t(s)\|^2_{1}\d s\,\d t
\leq \|\eta^a\|^2_{\M_a}
+2\int_a^b\l\pt u(t),\eta^t\r_{\M_t}\d t.
$$
Adding the two inequalities, we are led to
\begin{equation}
\label{lemma-E}
\LL(b)-\int_a^b\!\!\int_0^\infty\big[\dot\mu_t(s)+\mu'_t(s)\big]\|\eta^t(s)\|^2_{1}\d s\,\d t\leq \LL(a).
\end{equation}
In particular, since by {\bf (M4)} the integral in the left-hand side is positive,
$$
\LL(t) \leq \LL(\tau),
$$
and the desired conclusion follows from~\eqref{contr-E-Z}.
\end{proof}

For $\eps\in(0,1]$ to be suitably fixed later, we introduce the further functional
$$
\Lambda(t) = \LL(t) + 2\eps\big[\Phi(t) + 4\Psi(t)\big] + Q_0,
$$
with $Q_0$ as in \eqref{contr-E-Z} and $\Phi,\Psi$ defined in Section \ref{SSAF} by the choice
$(p,\pt p,\psi)=(u,\pt u,\eta)$.
Due to \eqref{contr-Phi-Psi} and \eqref{contr-E-Z}, and by the proposition above,
the inequality
\begin{equation}
\label{contr-E-Lambda}
\frac{\theta}{2}\EE(t,\tau)\leq
\Lambda(t) \leq \Q(\EE(t,\tau))\leq \Q(R)
\end{equation}
is easily seen to hold, provided that $\eps$ is small enough.

\begin{lemma}
\label{lemma-Lambda}
There exist a constant $Q\geq 0$ (independent of $R$)
and $\eps=\eps(R)>0$ small enough such that for every $b>a\geq\tau$
$$
\Lambda(b)+2\eps\int_r^t\Lambda(y)\d y \leq \Lambda(a)+\eps Q(b-a).
$$
If in addition $g=0$ and $c_f=0$, then $Q=0$.
\end{lemma}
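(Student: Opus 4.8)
\noindent The plan is to combine the global energy estimate already obtained in the proof of Proposition~\ref{prop-bound} with the two multiplier inequalities of Lemma~\ref{lemma-Phi-Psi}, each weighted by $\eps$, and then turn the resulting estimate into a differential inequality in integral form for $\Lambda$. Concretely, recall that~\eqref{lemma-E} together with {\bf (M4)} gives, for every $b>a\geq\tau$, the bound $\LL(b)+\int_a^b D(t)\,\d t\leq\LL(a)$, where $D(t):=-\int_0^\infty[\dot\mu_t(s)+\mu_t'(s)]\|\eta^t(s)\|^2_1\,\d s\geq\delta\kappa(t)\|\eta^t\|^2_{\M_t}\geq 0$, the inequality being {\bf (M4)} once more. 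Next, I apply Lemma~\ref{lemma-Phi-Psi} with $(p,\pt p,\psi)=(u,\pt u,\eta)$ and $\gamma=f(u)-g$, obtaining~\eqref{p-mult-bis} and~\eqref{pt-mult-bis}; I observe that the term $-M\int_a^b\!\int_0^\infty[\dot\mu_t+\mu_t']\|\eta^t(s)\|^2_1\,\d s\,\d t$ appearing in~\eqref{pt-mult-bis} is exactly $M\int_a^b D(t)\,\d t$, and that, thanks to Proposition~\ref{prop-bound}, $\Q(\|u(t)\|_1)\leq\Q(R)$ for all $t\geq\tau$.

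I then form the linear combination (energy estimate)$\,+\,2\eps\times$\eqref{p-mult-bis}$\,+\,8\eps\times$\eqref{pt-mult-bis}, bounding $\Lambda(b)-\Lambda(a)$ from above, and add $2\eps\int_a^b\Lambda(y)\,\d y$ to both sides, expanding this last integral through the definitions of $\Lambda$ and of $\LL$. Several cancellations occur: the $\int_a^b\l F(u),1\r\d t$ and $\int_a^b\l g,u\r\d t$ contributions coming from $2\eps\times$\eqref{p-mult-bis} cancel \emph{exactly} those produced by $2\eps\int_a^b\LL$; the $\int_a^b\|\pt u\|^2\,\d t$ terms add up to the strictly negative coefficient $-2\eps$; the $D$-terms add up to $-(1-8\eps M)\int_a^b D(t)\,\d t$, which for $\eps$ small is $\leq-\tfrac12\int_a^b D(t)\,\d t\leq-\tfrac\delta2\int_a^b\kappa(t)\|\eta^t\|^2_{\M_t}\,\d t$; and, choosing $\varpi=\theta/32$, the coefficient of $\int_a^b\|u(t)\|^2_1\,\d t$ becomes $-\tfrac\theta2\eps<0$. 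It then remains to absorb, for $\eps=\eps(R)$ small enough, three ``error'' contributions into these negative terms: the two positive $\int_a^b\kappa(t)\|\eta^t\|^2_{\M_t}\,\d t$ terms (of weights $\tfrac{4\eps}{\theta}$, from~\eqref{p-mult-bis}, and $\tfrac{8\eps}{\varpi}\Q(R)$, from~\eqref{pt-mult-bis}) against $-\tfrac\delta2\int_a^b\kappa(t)\|\eta^t\|^2_{\M_t}\,\d t$; the term $2\eps\int_a^b\|\eta^t\|^2_{\M_t}\,\d t$ against the same, using {\bf (M5)} in the form $\kappa(t)\geq\kappa_0>0$; and the term $4\eps^2\int_a^b[\Phi+4\Psi]\,\d t$, controlled via~\eqref{contr-Phi-Psi}, against the (negative) $\|u\|^2_1$, $\|\pt u\|^2$ and $\kappa\|\eta\|^2_{\M_t}$ terms. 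The point I would emphasize is that the sole $R$-dependent factor, $\Q(R)$, multiplies only $\int_a^b\kappa(t)\|\eta^t\|^2_{\M_t}\,\d t$, which is precisely the quantity dominated by the memory dissipation $\int_a^b D(t)\,\d t$ furnished by {\bf (M4)}; hence the $R$-dependence is entirely traded against the smallness of $\eps$ and never reaches the final constant.

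After all cancellations and absorptions, every term carrying $\int_a^b\|u\|^2_1$, $\int_a^b\|\pt u\|^2$ or $\int_a^b\kappa\|\eta\|^2_{\M_t}$ has a non-positive coefficient, and the only term surviving on the right-hand side is $\eps\big(2c_f+8\varpi\|g\|^2+2Q_0\big)(b-a)$. Setting $Q:=2c_f+\tfrac14\|g\|^2+2Q_0$ — a constant depending only on $f$ and $g$, hence independent of $R$, and vanishing when $g=0$ and $c_f=0$, since in that case $Q_0=0$ by~\eqref{contr-E-Z} — we reach $\Lambda(b)+2\eps\int_a^b\Lambda(y)\,\d y\leq\Lambda(a)+\eps Q(b-a)$, as claimed. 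As usual, these energy manipulations are carried out on the Galerkin approximants, as in~\cite{Timex}, and then passed to the limit. The main difficulty is purely organisational: one must keep careful track of the many constants in order to certify that $Q$ (and not merely the smallness threshold for $\eps$) is independent of $R$; the structural fact making this possible is the placement of the bad factor $\Q(R)$ exactly in front of the dissipated memory quantity.
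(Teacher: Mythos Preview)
Your proposal is correct and follows essentially the same approach as the paper: form the combination \eqref{lemma-E}$\,+\,2\eps\times$\eqref{p-mult-bis}$\,+\,8\eps\times$\eqref{pt-mult-bis} with $\varpi=\theta/32$, use Proposition~\ref{prop-bound} to bound $\Q(\|u\|_1)\leq\Q(R)$, and absorb all remaining integral terms via {\bf (M4)}, {\bf (M5)} and \eqref{contr-Phi-Psi} by taking $\eps=\eps(R)$ small. The only cosmetic difference is organizational: the paper first combines \eqref{p-mult-bis}$\,+\,4\times$\eqref{pt-mult-bis} into a single inequality displaying $\int_a^b\LL+\tfrac\theta2\int_a^b\EE$ on the left, then multiplies by $2\eps$, adds \eqref{lemma-E}, and packages the leftover terms into two quantities ${\mathfrak I}_1$ (controlling $4\eps[\Phi+4\Psi]$ against $\theta\EE$) and ${\mathfrak I}_2$ (the memory dissipation balance), showing each is nonnegative---whereas you add $2\eps\int\Lambda$ to both sides and do the term-by-term cancellation and absorption directly.
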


\begin{proof} From
Proposition \ref{prop-bound},
$$\Q(\|u(t)\|_1)\leq \Q(R).$$
Hence,
recalling that $\theta\leq 1$ and
using {\bf (M5)}, collecting \eqref{p-mult-bis} and \eqref{pt-mult-bis} with $\varpi=\theta/32$ we infer that
\begin{align*}
&\Phi(b)+4\Psi(b)+\int_a^b\LL(t)\d t+\frac\theta2\int_a^b\EE(t,\tau)\d t\\
&\leq \Phi(a)+4\Psi(a)-4M\int_a^b\int_0^\infty[\dot\mu_t(s)+\mu_t'(s)] \|\eta^t(s)\|^2_1\d s\,\d t\\
&\quad + \Q(R)\int_a^b\kappa(t)\| \eta^t \|^2_{\M_t}\d t  + Q_1(b-a),
\end{align*}
for some $Q_1\geq 0$, which equals zero when $g=0$ and $c_f=0$.
Therefore, taking into account \eqref{lemma-E}, we arrive at
$$
\Lambda(b)+2\eps \int_a^b\Lambda(t)\d t+{\mathfrak I}_1+{\mathfrak I}_2\leq \Lambda(a)+\eps Q(b-a),
$$
where $Q=2(Q_0+Q_1)$ and
\begin{align*}
{\mathfrak I}_1 &=\eps \int_a^b \Big(\theta\EE(t,\tau)-4\eps\big[\Phi(t) + 4\Psi(t)\big]\Big)\d t\\
{\mathfrak I}_2 &=-(1-8\eps M)\int_a^b\!\!\int_0^\infty\big[\dot\mu_t(s)+\mu'_t(s)\big]\|\eta^t(s)\|^2_{1}\d s\,\d t
- \eps \Q(R)\int_a^b\kappa(t)\| \eta^t \|^2_{\M_t}\d t.
\end{align*}
Observe that $Q=0$ if $g=0$ and $c_f=0$.
The proof is finished if we show that ${\mathfrak I}_1$ and ${\mathfrak I}_2$ are nonnegative.
Concerning ${\mathfrak I}_1$, this easily follows from \eqref{contr-Phi-Psi} up to taking $\eps$ sufficiently small.
Coming to ${\mathfrak I}_2$, we first apply {\bf (M4)} to get
$${\mathfrak I}_2\geq \big[\delta(1-8\eps M)- \eps \Q(R)\big]\int_a^b\kappa(t)\| \eta^t \|^2_{\M_t}\d t,$$
up to taking $\eps$ small enough such that $1-8\eps M>0$.
A further reduction of $\eps$, so that
$$\delta(1-8\eps M)\geq \eps \Q(R),$$
yields the claim. Note that the obtained value of $\eps$ depends on $R$.
\end{proof}

\subsection*{Proofs of Theorem \ref{MAIN-ABS} and Corollary \ref{corMAIN-ABS}}
After Lemma~\ref{lemma-Lambda}, we are in a position to
apply the Gronwall-type Lemma \ref{lemma-new-gw} in the Appendix B
with $q_1=0$ and $q_2=\eps Q$, to get
$$\Lambda(t)\leq \Lambda(\tau)\e^{-\eps(t-\tau)} + \frac{\eps Q \e^{\eps}}{1-\e^{-\eps}}.$$
Therefore, by \eqref{contr-E-Lambda},
$$\EE(t,\tau)\leq \frac\theta2\Lambda(t)\leq \Q(R)\e^{-\eps(t-\tau)} +R_0,$$
where
$$R_0=\frac{\theta Q}{2}\sup_{\eps\in(0,1]}\frac{\eps \e^{\eps}}{1-\e^{-\eps}}.
$$
The constant $R_0$, which is independent of $R$, equals zero when $g=0$ and $c_f=0$.
The latter occurs whenever \eqref{hp2bis-f} holds
(see Remark~\ref{remzeroo}). In fact, such a result is already enough in order to
obtain the time-dependent absorbing set. However, by means of a simple trick, it is possible to show
that the decay rate $\eps=\eps(R)$ is actually independent of $R$. This is particularly relevant
in the case $R_0=0$, for it gives the exponential decay of the energy.
To this end, we observe that there is an elapsed time $\tau_\star=\tau_\star(R)\geq 0$ such that
$$\EE(t_\star,\tau)\leq R_0+1 \qquad\text{where}\qquad
t_\star=\tau_\star+\tau.$$
Hence, for $t\geq t_\star$,
$$
\EE(t,\tau)
= \frac12 \| U(t,t_\star)U(t_\star,\tau)z_\tau\|^2_{\H_t}
\leq \Q(R_0+1)\e^{-\eps(t-t_\star)} + R_0,
$$
where $\eps=\eps(R_0+1)$.
On the other hand, if $t\in[\tau,t_\star]$,
$$
\EE(t,\tau)\leq \Q(R).
$$
Then, collecting the two inequalities, for every
$t \geq \tau$ we obtain
$$
\EE(t,\tau)
\leq \Q(R)\e^{-\eps(t-t_\star)} + R_0
= \Q(R)\e^{\eps \tau_\star}\e^{-\eps(t-\tau)}+ R_0
=\Q(R)\e^{-\eps(t-\tau)}+ R_0.
$$
In summary, the desired estimate holds
by setting
$$\omega=\eps(R_0+1),$$
which is now independent of $R$.
\qed

\medskip
We conclude the section by showing the existence of a dissipation integral, which will be needed later.

\begin{lemma}
\label{lemma-diss-ut}
For every $\eps\in(0,1]$ and every $b>a\geq \tau$,
$$
\int_a^b \| \pt u(t)\|^2\d t
\leq \eps(b- a) + \frac{\Q(R)}{\eps^2}.
$$
\end{lemma}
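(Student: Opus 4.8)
The plan is to read off the dissipation integral for $\pt u$ directly from the ``recovered dissipation'' estimate \eqref{pt-mult-bis}, applied to the solution itself, i.e.\ with the choice $(p,\pt p,\psi)=(u,\pt u,\eta)$. With this choice \eqref{pt-mult-bis} becomes
$$\int_a^b\|\pt u\|^2\d t\leq \big[\Psi(a)-\Psi(b)\big]-M\!\int_a^b\!\!\int_0^\infty[\dot\mu_t(s)+\mu_t'(s)]\|\eta^t(s)\|^2_1\d s\,\d t+2\varpi\!\int_a^b\!\|u\|^2_1\d t+\varpi\|g\|^2(b-a)+\frac1\varpi\!\int_a^b\!\Q(\|u\|_1)\kappa(t)\|\eta^t\|^2_{\M_t}\d t,$$
and the idea is that every term on the right is either bounded by a constant of the form $\Q(R)$, independent of $b-a$, or controlled by $\varpi\,\Q(R)(b-a)$; then choosing $\varpi$ of order $\eps/\Q(R)$ will produce exactly the asserted inequality.

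Concretely, I would first invoke Proposition \ref{prop-bound} to get $\EE(t,\tau)\leq\Q(R)$ for all $t\geq\tau$; this yields $\|u(t)\|_1^2\leq\Q(R)$, $\Q(\|u(t)\|_1)\leq\Q(R)$, and, through \eqref{contr-Phi-Psi}, the pointwise bound $|\Psi(t)|\leq\Q(R)$, hence $\Psi(a)-\Psi(b)\leq\Q(R)$. Next I would combine the Lyapunov inequality \eqref{lemma-E} with \eqref{contr-E-Z} and Proposition \ref{prop-bound} to obtain $\LL(a)-\LL(b)\leq\Q(R)$, and then use axiom {\bf (M4)} twice: once in the form $-[\dot\mu_t(s)+\mu_t'(s)]\geq0$, which gives $-\int_a^b\int_0^\infty[\dot\mu_t(s)+\mu_t'(s)]\|\eta^t(s)\|^2_1\,\d s\,\d t\leq\LL(a)-\LL(b)\leq\Q(R)$; and once in the stronger form $-[\dot\mu_t(s)+\mu_t'(s)]\geq\delta\kappa(t)\mu_t(s)$, which gives $\int_a^b\kappa(t)\|\eta^t\|^2_{\M_t}\,\d t\leq\frac1\delta\big(\LL(a)-\LL(b)\big)\leq\Q(R)$. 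Thus the second term in the displayed inequality is $\leq\Q(R)$ and the last one is $\leq\Q(R)/\varpi$.

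Putting these bounds together, and absorbing $\varpi\|g\|^2(b-a)$ into $\varpi\,\Q(R)(b-a)$ (since $\Q$ may depend on the data), one is left with $\int_a^b\|\pt u\|^2\d t\leq\Q(R)+\varpi\,\Q(R)(b-a)+\Q(R)/\varpi$. Taking $\varpi=\eps/\Q(R)\in(0,1]$ — legitimate since $\eps\leq1$ and one may assume $\Q(R)\geq1$ — makes the middle term $\leq\eps(b-a)$ and the last one equal to $\Q(R)^2/\eps$; recalling that $\Q$ is \emph{generic} and $\eps\leq1$, the remaining $\Q(R)+\Q(R)^2/\eps$ are then absorbed into a single expression of the form $\Q(R)/\eps^2$, which is the claim. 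I expect the only slightly delicate point to be the bookkeeping of the generic constants, so that the coefficient of $(b-a)$ comes out exactly $\eps$ while the residual stays of order $\Q(R)/\eps^2$; no tool beyond \eqref{pt-mult-bis}, \eqref{lemma-E} and Proposition \ref{prop-bound} is required, in accord with the lemma being stated as a brief closing remark of the section.
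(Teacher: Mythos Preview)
Your argument is correct and follows essentially the same strategy as the paper: apply \eqref{pt-mult-bis} to $(u,\pt u,\eta)$, control all terms via Proposition~\ref{prop-bound}, \eqref{contr-Phi-Psi}, \eqref{lemma-E} and {\bf (M4)}, and then rescale $\varpi=\eps/\Q(R)$. The only organizational difference is that the paper packages $\LL+\varpi^2\Psi$ into a single functional $\Upsilon$ and uses {\bf (M4)} to make the combined remainder $\mathfrak I$ nonnegative, whereas you bound the two integrals $-M\!\int\!\!\int[\dot\mu_t+\mu_t']\|\eta^t\|_1^2$ and $\int\kappa\|\eta^t\|_{\M_t}^2$ separately by $\Q(R)$ directly from \eqref{lemma-E}; this is slightly more direct and even yields the marginally sharper intermediate form $\Q(R)+\varpi\Q(R)(b-a)+\Q(R)/\varpi$ before the final substitution.
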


\begin{proof}
For $\varpi\in(0,1]$, we consider the functional
$$
\Upsilon(t) = \L(t) + \varpi^2\Psi(t).
$$
Exploiting \eqref{contr-Phi-Psi}, \eqref{pt-mult-bis}, \eqref{lemma-E} and Proposition~\ref{prop-bound}, we deduce
the integral inequality
$$
\varpi^2\int_a^b\|\pt u(t)\|^2\d t +{\mathfrak I}
\leq \varpi^3\Q(R)(b-a)+\Upsilon(a)-\Upsilon(b)\leq \varpi^3\Q(R)(b-a)+\Q(R),
$$
having set
$${\mathfrak I}=-(1-\varpi^2M)\int_a^b\!\!\int_0^\infty[\dot\mu_t(s)+\mu_t'(s)]\|\eta^t(s)\|^2_1\d s\,\d t
-\varpi \Q(R)\int_a^b \kappa(t)\|\eta^t \|^2_{\M_t}\d t.$$
Owing to {\bf (M4)}, up to possibly reducing $\varpi$,
$${\mathfrak I}\geq \big[\delta(1-\varpi^2 M)-\varpi\Q(R)\big]\int_a^b \kappa(t)\|\eta^t\|^2_{\M_t}\d t\geq 0.
$$
Therefore,
$$
\int_a^b\|\pt u(t)\|^2\d t
\leq \varpi\Q(R)(b-a)+\frac{\Q(R)}{\varpi^2}.
$$
If $\eps>0$ is small enough, up to taking  $\varpi=\eps/\Q(R)$ we obtain the desired inequality,
which then clearly holds for every
$\eps\in(0,1]$.
\end{proof}

\section{The Global Attractor: Proof of Theorem \ref{MAIN-ATTRACTOR}}
\label{SATT}

\noindent
We show that the
process $U(t,\tau):\H_\tau\to\H_t$ possesses
the time-dependent global attractor $\att$.
The proof leans on an abstract result devised in \cite{CPT}, saying
that $\att$ exists if and only if there is
a {\it pullback attracting family}\footnote{The notion
of pullback attracting family is given in Definition~\ref{def-pulla}.} ${\mathfrak C}=\{C_t\}_{t\in\R}$
whose sections $C_t$ are compact in $\H_t$.
Such an attractor $\att$ is invariant as well. Indeed, we know from
\cite[Theorem 5.6]{CPT} that, whenever it exists, the time-dependent global
attractor of a continuous-in-space process is always invariant.
And in our case, in the light of Theorem~\ref{thm-ex-un}, our process $U(t,\tau)$ is
even Lipschitz continuous.

\smallskip
Accordingly, the aim is to find a pullback attracting family with compact sections.
To this end,
let $\mathfrak{B} = \{B_t\}_{t \in \R}$ be a time-dependent absorbing set for the process $U(t,\tau)$.
In what follows, $\tau\in\R$ is an arbitrarily fixed
starting time, and $z_\tau \in B_\tau$ an arbitrarily fixed initial datum.
Throughout this section, the generic constant
$C> 0$ may depend on $\mathfrak{B}$, but is independent of $\tau$ and of the particular choice of $z_\tau$.
By Proposition \ref{prop-bound},
\begin{equation}
\label{bdd-U-B0}
\|U(t,\tau)z_\tau\|_{\H_t}\leq C, \quad\forall t\geq\tau.
\end{equation}
Following a pretty standard procedure (see e.g.\ \cite{LSD}), we split
the nonlinearity $f$ into the sum
$$f(u) = f_0(u) +f_1(u),$$
where
$f_1\in \C^2(\R)$ is globally Lipschitz with $f_1(0) = 0$, while
$f_0 \in \C^2(\R)$ vanishes inside $[-1,1]$ and
fulfills the inequalities
\begin{align}
\label{hp2-phi}
&|f_0''(u)| \leq C|u|,\\
\label{hp1-phi}
&f'_0(u)\geq 0.
\end{align}
Then, we decompose the solution $U(t,\tau)z_\tau$ as
$$U(t,\tau)z_\tau=U_0(t,\tau)z_\tau+U_1(t,\tau)z_\tau,$$
where
$$U_0(t,\tau)z_\tau = (v(t), \pt v(t), \xi^t)\and
U_1(t,\tau)z_\tau = (w(t), \pt w(t), \zeta^t)$$
solve the problems
\begin{equation}
\label{sis-mem-v}
\begin{cases}
\ptt v(t) + Av(t) + \displaystyle{\int_0^\infty \mu_t(s) A\xi^t(s)\d s}
+ f_0(v(t))= 0, \\
\noalign{\vskip1mm}
U_0(\tau,\tau)z_\tau = z_\tau,
\end{cases}
\end{equation}
where
$$\xi^t(s) =
\begin{cases}
v(t) - v(t-s),                     &s \leq t- \tau,\\
\xi_{\tau}(s-t+\tau) + v(t) - v_\tau, &s > t -\tau,
\end{cases}$$
and
\begin{equation}
\label{sis-mem-w}
\begin{cases}
\ptt w(t) + Aw(t) + \displaystyle{\int_0^\infty \mu_t(s) A\zeta^t(s)\d s}
+ f_0(u(t)) - f_0(v(t)) +f_1(u(t))= g, \\
\noalign{\vskip1mm}
U_1(\tau,\tau)z_\tau = 0,
\end{cases}
\end{equation}
where
$$\zeta^t(s) =
\begin{cases}
w(t) - w(t-s),                     &s \leq t- \tau,\\
\zeta_{\tau}(s-t+\tau) + w(t) - w_\tau, &s > t -\tau.
\end{cases}$$
Note that $U_0(t,\tau)$ is itself a process. Besides, due to~\eqref{hp1-phi},
Corollary~\ref{corMAIN-ABS} applies as well
to~\eqref{sis-mem-v}, providing the exponential decay
\begin{equation}
\label{dec-exp-B0}
\|U_0(t,\tau)z_\tau\|^2_{\H_t}\leq
C\e^{-\omega(t-\tau)}.
\end{equation}

\begin{lemma}
\label{bound-wt-1-3}
For every $t\geq\tau$, we have the estimate
$$\EE_1(t)=\frac12\| U_1(t,\tau)z_\tau\|^2_{\H^{1/3}_t}\leq C.
$$
\end{lemma}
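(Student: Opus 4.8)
The statement is equivalent to the uniform bound $\EE_1(t)\le C$ for all $t\ge\tau$, and the plan is to obtain it by reproducing at the regularity level $\H^{1/3}_t$ the dissipative scheme already used for Proposition~\ref{prop-bound} and Theorem~\ref{MAIN-ABS}, now applied to system~\eqref{sis-mem-w}; the whole point is that the extra forcing terms created by the splitting $f=f_0+f_1$ turn out to be controllably small. First I would collect the two facts that make this possible. From~\eqref{bdd-U-B0}, \eqref{dec-exp-B0} and the triangle inequality one gets the uniform bounds $\|u(t)\|_1,\|v(t)\|_1,\|w(t)\|_1\le C$; since $f_1$ is globally Lipschitz with $f_1(0)=0$, this already yields $\|f_1(u(t))\|_1\le C(1+\|u(t)\|_1)\le C$. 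The key (and only nonstandard) ingredient is the fractional bound
$$\|f_0(u(t))-f_0(v(t))\|_{1/3}\le \Q\big(\|u(t)\|_1+\|v(t)\|_1\big)\|w(t)\|_1\le C,$$
which I would prove by writing $f_0(u)-f_0(v)=\beta w$ with $\beta=\int_0^1 f_0'(v+\theta w)\d\theta$, using~\eqref{hp2-phi} (hence $|f_0'(r)|\le C|r|^2$ and $|f_0''(r)|\le C|r|$, as $f_0$ vanishes on $[-1,1]$) to get $\beta$ bounded in $L^3(\Omega)\cap W^{1,3/2}(\Omega)$, and then interpolating the resulting $L^2$-bound on $\beta w$ against the $L^{3/2}$-bound on $\nabla(\beta w)$ via the three-dimensional embeddings $\h^1\subset L^6$ and $W^{1,3/2}\subset L^3$.

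Then I would run the energy estimate for~\eqref{sis-mem-w} exactly as in Section~\ref{SDIS}, only multiplying by $A^{1/3}\pt w$ instead of $\pt w$: since there is no differential equation for $\zeta^t$, one couples the resulting identity with Lemma~\ref{lemma-eta-norm} taken with $\sigma=1/3$ and $(p,\pt p,\psi)=(w,\pt w,\zeta)$. The time-independent term $g$ is harmless here because $\l g,A^{1/3}\pt w\r=\ddt\l g,A^{1/3}w\r$, so it gets absorbed into a corrected energy
$$\L_1(t)=\|w(t)\|^2_{4/3}+\|\pt w(t)\|^2_{1/3}+\|\zeta^t\|^2_{\M_t^{1/3}}-2\l g,A^{1/3}w(t)\r,$$
which by Poincar\'e is equivalent to $2\EE_1(t)$. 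To turn the memory dissipation into decay of the whole energy, I would introduce the level-$1/3$ analogues $\Phi_{1/3}(t)=2\l w(t),\pt w(t)\r_{1/3}$ and $\Psi_{1/3}(t)=-\frac{2}{\kappa(t)}\int_0^\infty\mu_t(s)\l\zeta^t(s),\pt w(t)\r_{1/3}\d s$ of the functionals of Section~\ref{SSAF}; they satisfy the $\sigma=1/3$ versions of Lemmas~\ref{lemma-Phi-inc}--\ref{lemma-Psi} with verbatim the same proofs (multiplying~\eqref{sis-mem-w} by $A^{1/3}w$, resp.\ by $\frac{2}{\kappa(t)}\int_0^\infty\mu_t(s) A^{1/3}\zeta^t(s)\d s$). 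In these two estimates the forcing $\gamma=f_0(u)-f_0(v)+f_1(u)-g$ enters only through pairings of the type $\l\gamma,A^{1/3}w\r$ and $\frac{2}{\kappa}\int_0^\infty\mu_t\l\zeta^t,\gamma\r$, each of which — using the two bounds above, {\bf (M5)} and the H\"older-type inequality of Section~\ref{SSAF} — is controlled by $C\sqrt{\EE_1(t)}$; likewise, the only right-hand side of the basic energy identity is $\l f_0(u)-f_0(v)+f_1(u),A^{1/3}\pt w\r\le C\sqrt{\EE_1(t)}$, the fractional bound on $f_0(u)-f_0(v)$ being exactly what is needed to pair it against $A^{1/3}\pt w\in\h^{-1/3}$.

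At this point, forming $\Lambda_1(t)=\L_1(t)+2\eps[\Phi_{1/3}(t)+4\Psi_{1/3}(t)]+Q_0'$ with $\eps$ small (and $Q_0'$ chosen so that $\Lambda_1\ge0$), one gets, just as in the proof of Lemma~\ref{lemma-Lambda}, that $\Lambda_1$ is equivalent to $\EE_1$ and that
$$\Lambda_1(b)+2\eps'\!\int_a^b\Lambda_1(t)\d t\le\Lambda_1(a)+C(b-a),\qquad b>a\ge\tau,$$
the only novelty being the terms $C\sqrt{\EE_1}$, which Young's inequality swallows after a further reduction of $\eps$. Since $U_1(\tau,\tau)z_\tau=0$, the initial value $\Lambda_1(\tau)=Q_0'$ is a universal constant, so the Gronwall-type Lemma~\ref{lemma-new-gw} of Appendix~B gives $\EE_1(t)\le C\Lambda_1(t)\le C$ for every $t\ge\tau$, with $C$ independent of $\tau$ and of $z_\tau\in B_\tau$. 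The main obstacle is the fractional estimate on $f_0(u)-f_0(v)$: hypothesis~\eqref{hp2-phi} only places this difference in $\h$ a priori, and extracting the extra $1/3$ derivative — which is precisely why the target regularity is $\H^{1/3}_t$ rather than $\H^1_t$ — is what forces the interpolation argument above; a secondary, purely bookkeeping, difficulty is writing out the level-$1/3$ counterparts of the Section~\ref{SSAF} functionals and checking, via the representation formula~\eqref{rep-p-psi}, that $\zeta^t$ genuinely lies in $\M_t^{1/3}$, so that all the pairings involved are legitimate.
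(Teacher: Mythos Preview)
Your scheme has a genuine gap at exactly the point you flag as the main obstacle: the fractional bound $\|f_0(u)-f_0(v)\|_{1/3}\le C$ does \emph{not} follow from $u,v,w\in\h^1$. Writing $\beta w$ as you do, H\"older gives $\beta\nabla w\in L^{6/5}$ (from $\beta\in L^3$, $\nabla w\in L^2$) and $w\nabla\beta\in L^{6/5}$ (from $w\in L^6$, $\nabla\beta\in L^{3/2}$), so $\nabla(\beta w)\in L^{6/5}$ only, not $L^{3/2}$. In three dimensions $W^{1,6/5}$ has Sobolev index $1-\tfrac{5}{2}=-\tfrac{3}{2}$, strictly below the index $\tfrac{1}{3}-\tfrac{3}{2}=-\tfrac{7}{6}$ of $H^{1/3}$, so $W^{1,6/5}\not\hookrightarrow H^{1/3}$. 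Equivalently, the trilinear rule $H^1\cdot H^1\cdot H^1\hookrightarrow L^2$ is sharp, and the cubic $f_0(u)-f_0(v)$ cannot be placed in $\h^{1/3}$ from $H^1$ data alone. Hence the pairing $\l f_0(u)-f_0(v),A^{1/3}\pt w\r$ in your basic energy identity is not controlled by $C\sqrt{\EE_1}$, and the argument does not close.

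The paper avoids this by a different mechanism. Rather than estimating $\l\gamma,A^{1/3}\pt w\r$, it absorbs the coupling $2\l\gamma,A^{1/3}w\r$ into the Lyapunov functional $L_1$; upon time-differentiation this produces $2\l\gamma,A^{1/3}\pt w\r$, which cancels the bad term coming from the equation, and one is left only with $2\l\dot\gamma,A^{1/3}w\r$. The resulting pieces $\l[f_0'(u)-f_0'(v)]\pt u,A^{1/3}w\r$, $\l f_0'(v)\pt w,A^{1/3}w\r$, $\l f_1'(u)\pt u,A^{1/3}w\r$ are then estimated, via the embeddings $\h^{4/3}\subset L^{18}$ and $\h^{2/3}\subset L^{18/5}$, by $Cq(t)\L_1(t)+Cq(t)$ with $q(t)=\|\pt u(t)\|+\|v(t)\|_1^2$. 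The crucial input here is the dissipation integral of Lemma~\ref{lemma-diss-ut} (together with~\eqref{dec-exp-B0}), which shows $\int_a^b q\le\eps(b-a)+C\eps^{-3}$ and thus supplies the nontrivial $q_1$ in the Gronwall-type Lemma~\ref{lemma-new-gw}. Your plan, with $q_1\equiv0$, never invokes this lemma and cannot succeed without it.
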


\begin{proof}
For $t\geq\tau$, we consider the functional
$$
\LL_1(t) = L_1(t)+\| \zeta^t \|^2_{\M^{1/3}_t},
$$
where
$$L_1(t)=\|w(t)\|^2_{4/3}+\|\pt w(t)\|^2_{1/3}
+ 2\l \gamma(t), A^{1/3}w(t)\r,$$
having defined
$$\gamma(t)=f_0(u(t)) - f_0(v(t)) +f_1(u(t))-g.$$
Collecting \eqref{hp1-f}, \eqref{bdd-U-B0}, \eqref{hp2-phi}
and \eqref{dec-exp-B0}, we learn that
\begin{equation}
\label{bdd-f-phi}
\| \gamma(t)\|\leq C,
\end{equation}
and we readily deduce the controls
\begin{equation}
\label{contr-U1-E1}
\frac32\EE_1(t) - C
\leq \LL_1(t)
\leq \frac52\EE_1(t)+ C.
\end{equation}
A multiplication in $\h^{1/3}$ of the first equation of \eqref{sis-mem-w} by $2\pt w$
gives
$$
\begin{aligned}
&\ddt L_1 + 2\l\zeta,\pt w\r_{\M_t^{1/3}}\\
&\quad=2\l [f_0'(u) - f_0'(v)]\pt u, A^{1/3}w\r
+ 2\l f_0'(v)\pt w, A^{1/3}w\r
+ 2\l f'_1(u)\pt u, A^{1/3}w\r.
\end{aligned}
$$
The estimate of the right-hand side is completely standard, by means of the
(uniform) bounds \eqref{bdd-U-B0}, \eqref{hp2-phi}, \eqref{dec-exp-B0}, $|f'_1|\leq C$,
along with the Sobolev embeddings
$$\h^\sigma \subset L^{6/(3-2\sigma)}(\Omega),\quad \sigma\in \textstyle (0,\frac32).$$
Namely,
\begin{align*}
2\l [f_0'(u) - f_0'(v)]\pt u, A^{1/3}w\r
&\leq C\big(1+\|u\|_{L^6}+\|v\|_{L^6}\big)\|\pt u\|\|w\|_{L^{18}}\|A^{1/3}w\|_{L^{18/5}}\\
&\leq C\|\pt u\|\|w\|^2_{4/3},\\
\noalign{\vskip1mm}
2\l f_0'(v)\pt w, A^{1/3}w\r
&\leq C\|v\|_{L^6}^2\|\pt w\|_{L^{18/7}}\|A^{1/3}w\|_{L^{18/5}}\\
&\leq C\|v\|_1^2\big(\|w\|_{4/3}^2+\|\pt w\|_{1/3}^2\big),\\
\noalign{\vskip1mm}
2\l f'_1(v)\pt u, A^{1/3}w\r
&\leq C\|\pt u \| + C\|\pt u \|\| w \|_{4/3}^2.
\end{align*}
In summary, defining
$$q(t)=\|\pt u(t)\|+\|v(t)\|_1^2,$$
and recalling \eqref{contr-U1-E1},
the right-hand side above is bounded by
$Cq+Cq\L_1$.
Note that, by
Lemma~\ref{lemma-diss-ut} and \eqref{dec-exp-B0},
\begin{equation}
\label{montisola}
\int_a^b \| q(t)\|\d t
\leq \eps(b- a) + \frac{C}{\eps^3},
\end{equation}
for every $b>a\geq \tau$ and every $\eps\in(0,1]$.
Hence,
an integration on $[a,b]$ yields
$$L_1(b)+2\int_a^b\l\pt w(t),\zeta^t\r_{\M_t^{1/3}}\d t
\leq L_1(a)+C\int_a^b q(t) \L_1(t)\d t+C\int_a^b q(t)\d t.$$
At the same time, from Lemma \ref{lemma-eta-norm}
for $\sigma=1/3$,
$$
\|\zeta^b\|^2_{\M_b^{1/3}}
-\int_a^b\!\!\int_0^\infty\big[\dot\mu_t(s)+\mu'_t(s)\big]\|\zeta^t(s)\|^2_{4/3}\d s\,\d t
\leq \|\zeta^a\|^2_{\M_a^{1/3}}
+2\int_a^b\l\pt w(t),\zeta^t\r_{\M_t^{1/3}}\d t.
$$
Adding the two integral inequalities,
we end up with
\begin{align}
\label{ineq-E1}
&\L_1(b)
-\int_a^b\!\!\int_0^\infty\big[\dot\mu_t(s)+\mu'_t(s)\big]\|\zeta^t(s)\|^2_{4/3}\d s\,\d t\\
\notag
&\quad\leq \L_1(a)+ C\int_a^b q(t) \L_1(t)\d t + C\int_a^b q(t)\d t.
\end{align}
For $\eps\in (0,1]$ to be chosen later, we introduce the further functional
$$\Lambda_1(t)= \LL_1(t)
+ 2\eps\big[\Phi(t) +4\Psi(t)\big],$$
where
$\Phi$ and $\Psi$ are defined as in Section \ref{SSAF}, using the triplet
$$(p,\pt p, \psi)=
(A^{1/6}w,A^{1/6}\pt w, A^{1/6}\zeta).$$
In particular, \eqref{contr-Phi-Psi} now reads
\begin{equation}
\label{contr-Phi-Psi-1}
|\Phi(t)|+|\Psi(t)|
\leq C\EE_1(t).
\end{equation}
Consequently,
owing to \eqref{contr-U1-E1},
for $\eps$ sufficiently small
$\Lambda_1$ is controlled by
\begin{equation}
\label{contr-U1-L1}
\EE_1(t) - C
\leq \Lambda_1(t)
\leq 3\EE_1(t) + C.
\end{equation}
Exploiting Lemmas \ref{lemma-Phi-inc} and \ref{lemma-Psi} with $\varpi=1/10$, we infer that
\begin{align*}
&\Phi(b)+4\Psi(b)+\frac32\int_a^b\|w(t)\|^2_{4/3}\d t+2\int_a^b\|\pt w(t)\|^2_{1/3}\d t\\
&\leq \Phi(a)+4\Psi(a)-4M\int_a^b\int_0^\infty[\dot\mu_t(s)+\mu_t'(s)] \|\zeta^t(s)\|^2_{4/3}\d s\,\d t
+ C\int_a^b\kappa(t)\| \zeta^t \|^2_{\M_t^{1/3}}\d t
\\
&\quad-
2\int_a^b \l \gamma(t), A^{1/3} w(t)\r\d t+
\int_a^b\frac{2}{\kappa(t)}\int_0^\infty\mu_t(s)
\l \gamma(t),A^{1/3}\zeta^t(s)\r\d s\d t.
\end{align*}
This inequality, making use of {\bf (M5)}, \eqref{bdd-f-phi}
and \eqref{contr-U1-L1}, enhances to
\begin{align*}
&\Phi(b)+4\Psi(b)+\int_a^b\Lambda_1(t)\d t\\
&\leq \Phi(a)+4\Psi(a)-4M\int_a^b\int_0^\infty[\dot\mu_t(s)+\mu_t'(s)] \|\zeta^t(s)\|^2_{4/3}\d s\,\d t\\
&\quad + C\int_a^b\kappa(t)\| \zeta^t \|^2_{\M_t^{1/3}}\d t
+C(b-a),
\end{align*}
and taking into account \eqref{ineq-E1}-\eqref{contr-U1-L1}
we arrive at
$$
\Lambda_1(b)+2\eps \int_a^b\Lambda_1(t)\d t+
{\mathfrak I}\leq \Lambda_1(a)+C\int_a^b q(t) \Lambda_1(t)\d t + C\int_a^b \big[q(t)+1\big]\d t,
$$
where
\begin{align*}
{\mathfrak I} &=-(1-8\eps M)\int_a^b\!\!\int_0^\infty\big[\dot\mu_t(s)+\mu'_t(s)\big]\|\zeta^t(s)\|^2_{4/3}\d s\,\d t
- \eps C\int_a^b\kappa(t)\| \zeta^t \|^2_{\M_t^{1/3}}\d t.
\end{align*}
Owing to {\bf (M4)}, we get
$${\mathfrak I}\geq \big[\delta(1-8\eps M)- \eps C\big]\int_a^b\kappa(t)\| \zeta^t \|^2_{\M_t^{1/3}}\d t,$$
which is nonnegative up to fixing $\eps$ sufficiently small that
$$\delta(1-8\eps M)\geq \eps C. $$
In conclusion,
$$\Lambda_1(b)-\Lambda_1(a) + 2\eps\int_a^b\Lambda_1(t)\d t
\leq \int_a^b q_1(t) \Lambda_1(t)\d t +\int_a^b q_2(t)\d t, $$
with
$$q_1(t)= C q(t) \and q_2(t)=Cq(t)+C.$$
Keeping in mind
\eqref{montisola},
we can apply the Gronwall-type
Lemma~\ref{lemma-new-gw}. Since $\Lambda_1(\tau)=0$,
this gives
$$\Lambda_1(t) \leq  C, \quad \forall t \geq \tau.$$
The claim then follows from \eqref{contr-U1-L1}.
\end{proof}

\subsection*{Proof of Theorem \ref{MAIN-ATTRACTOR}}
Since $\mathfrak{B} = \{B_t\}_{t \in \R}$ is a time-dependent absorbing set,
collecting~\eqref{dec-exp-B0} and Lemma~\ref{bound-wt-1-3} we infer
that the family of $\H_t^{1/3}$-balls
$$\mathfrak{B}^\star = \big\{\BB_t^{1/3}(r)\big\}_{t \in \R}$$
is pullback attracting provided that $r>0$ is sufficiently large, for
$$\di_{\H_t}\big(U(t,\tau)B_\tau, \BB_t^{1/3}(r)\big)
\leq \sup_{z_\tau\in B_\tau}\|U_0(t,\tau)z_\tau\|_{\H_t}
\leq C\e^{-\frac{\omega}{2}(t-\tau)}.$$
Unfortunately, this is not enough to conclude.
Indeed, although
closed balls of $\H_t^{1/3}$ are uniformly bounded\footnote{Note that,
for any $\sigma>0$,
the embedding constant of $\M_t^{\sigma}\subset \M_t $ is independent of $t$.}, they fail to be compact in $\H_t$,
due to the lack of compactness of the embedding $\M_t^{1/3}\subset \M_t $ (see \cite{PZ}).
On the other hand, up to possibly enlarge $r$,
it is possible to find, for every fixed $t$, a compact set $C_t\subset \BB_t^{1/3}(r)$
such that $\mathfrak{C}=\{C_t\}_{t\in\R}$ is still pullback attracting.
The argument goes word by word as in the proof of Lemma~7.2 in \cite{Visco},
where the same model is considered for a constant-in-time memory kernel,
and is therefore omitted.
Accordingly, the existence of
the time-dependent global attractor
$\att=\{A_t\}_{t\in\R}$
is attained.
\qed

\section{Regularity: Proof of Theorem \ref{regMAIN-ATTRACTOR}}
\label{SREG}

\noindent
In the previous proof, we found a
pullback attracting family $\mathfrak{C}=\{C_t\}_{t\in\R}$
whose sections $C_t$ are compact and uniformly bounded in
$\H_t^{1/3}$. Since, due to
the minimality property of the attractor, the inclusion $A_t \subset C_t$ holds for every $t\in\R$,
it follows that
$$
\sup_{t\in\R}\|A_t\|_{\H_t^{1/3}}<\infty,
$$
meaning that the sections $A_t$ are uniformly bounded in $\H_t^{1/3}$.
The next step is to improve such a regularity.
To this end, we exploit once more the decomposition strategy discussed above, but
for initial data on the attractor. Namely,
for arbitrarily fixed $\tau\in\R$
and
$z_\tau \in A_\tau$,
let us write
$$U(t,\tau)z_\tau=U_0(t,\tau)z_\tau+U_1(t,\tau)z_\tau,$$
where $U_0(t,\tau)$ and $U_1(t,\tau)$ satisfy \eqref{sis-mem-v} and \eqref{sis-mem-w}, respectively,
with
$$f_0(u)=0\and f_1(u)=f(u).$$
In particular, Corollary~\ref{corMAIN-ABS} applies to
$U_0(t,\tau)$, yielding the exponential decay
\begin{equation}
\label{dec-exp-A}
\|U_0(t,\tau)z_\tau\|^2_{\H_t}\leq
C\e^{-\omega(t-\tau)}.
\end{equation}
Here and in what follows, the generic constant $C>0$ depends only on the attractor $\att$
(and is independent of $\tau$ and $z_\tau \in A_\tau$).

\begin{lemma}
\label{lemma-wt-1}
For every $t\geq\tau$, we have the estimate
$$\EE_2(t)=\frac12\|U_1(t,\tau)z_\tau\|_{\H_t^1}^2\leq C.
$$
\end{lemma}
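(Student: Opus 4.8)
The plan is to adapt the argument of Lemma~\ref{bound-wt-1-3} to the higher level $\H_t^1$, exploiting that $z_\tau$ now lies on the attractor. Since $\att$ is invariant (Theorem~\ref{MAIN-ATTRACTOR}) and, as recalled at the beginning of this section, $\sup_{t}\|A_t\|_{\H_t^{1/3}}<\infty$, the solution $U(t,\tau)z_\tau=(u(t),\pt u(t),\eta^t)$ stays in $A_t$, whence
$$
\|u(t)\|_{4/3}+\|\pt u(t)\|_{1/3}\leq C,\qquad\forall\, t\geq\tau,
$$
with $C$ depending only on $\att$. With the choice $f_0=0$, $f_1=f$, the triplet $U_1(t,\tau)z_\tau=(w,\pt w,\zeta)$ solves, with null initial datum,
$$
\ptt w(t)+Aw(t)+\int_0^\infty\mu_t(s)A\zeta^t(s)\d s+\gamma(t)=0,\qquad\gamma(t)=f(u(t))-g .
$$
Using \eqref{hp1-f} (hence $|f'(u)|\leq C(1+|u|^2)$) together with the Sobolev embeddings $\h^{4/3}\subset L^{18}$ and $\h^{1/3}\subset L^{18/7}$, the above bounds on $u$ give at once $\|\gamma(t)\|\leq C$ and $\|\pt\gamma(t)\|=\|f'(u(t))\pt u(t)\|\leq C$; note that the constant-in-time force $g$ enters $\gamma$ but not $\pt\gamma$.

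I would then work with
$$
\LL_2(t)=\|w(t)\|_2^2+\|\pt w(t)\|_1^2+2\l\gamma(t),Aw(t)\r+\|\zeta^t\|^2_{\M_t^1},
$$
for which $\|\gamma\|\leq C$ yields $\frac32\EE_2(t)-C\leq\LL_2(t)\leq\frac52\EE_2(t)+C$. Multiplying the equation for $w$ by $2\pt w$ in $\h^1$ and rewriting $2\l\gamma,\pt w\r_1$ through the cross-term in $\LL_2$, one obtains
$$
\ddt\Big[\|w\|_2^2+\|\pt w\|_1^2+2\l\gamma(t),Aw(t)\r\Big]+2\l\zeta^t,\pt w\r_{\M_t^1}=2\l\pt\gamma(t),Aw(t)\r\leq\eps\|w(t)\|_2^2+\frac{C}{\eps},
$$
by $\|\pt\gamma\|\leq C$ and Young's inequality. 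Integrating on $[a,b]$ and adding Lemma~\ref{lemma-eta-norm} with $\sigma=1$, I would reach an inequality of the form
$$
\LL_2(b)-\int_a^b\!\!\int_0^\infty[\dot\mu_t(s)+\mu_t'(s)]\|\zeta^t(s)\|_2^2\d s\,\d t\leq\LL_2(a)+\eps\int_a^b\|w(t)\|_2^2\d t+C(b-a).
$$

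To close the estimate I would introduce the corrected functional $\Lambda_2(t)=\LL_2(t)+2\eps[\Phi(t)+4\Psi(t)]$, with $\Phi,\Psi$ as in Section~\ref{SSAF} built from the triplet $(A^{1/2}w,A^{1/2}\pt w,A^{1/2}\zeta)$ — so that $\|p\|_1=\|w\|_2$, $\|\psi\|_{\M_t}=\|\zeta\|_{\M_t^1}$ and, by \eqref{contr-Phi-Psi}, $|\Phi|+|\Psi|\leq C\EE_2$, hence $\EE_2-C\leq\Lambda_2\leq 3\EE_2+C$ for $\eps$ small. The forcing in the $p$-equation is $A^{1/2}\gamma$: in applying Lemmas~\ref{lemma-Phi-inc} and~\ref{lemma-Psi} the fractional power must be shifted onto $w$ and $\zeta^t(s)$ rather than onto $\gamma$ (since $g\notin\h^1$, so $\|\gamma\|_1=\infty$), so that the forcing terms are controlled by $\|\gamma\|\,\|w\|_2$ and, via the H\"older-type bound and {\bf (M5)}, by $\|\gamma\|\,\|\zeta^t\|_{\M_t^1}$, hence — using $\|\gamma\|\leq C$ — by $\eps\|w\|_2^2+\eps\|\zeta^t\|^2_{\M_t^1}+C/\eps$. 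Applying Lemmas~\ref{lemma-Phi-inc}--\ref{lemma-Psi} with $\varpi$ fixed, then fixing $\eps$ small (so that $1-8\eps M>0$ and the bad constants are dominated), the coercive terms $\int_a^b(\|w\|_2^2+\|\pt w\|_1^2)\d t$ together with {\bf (M4)}--{\bf (M5)} (turning the dissipation integral into a positive multiple of $\int_a^b\kappa(t)\|\zeta^t\|^2_{\M_t^1}\d t$) absorb the $\eps$-leftovers and produce the term $2\eps\int_a^b\Lambda_2(t)\d t$, giving
$$
\Lambda_2(b)-\Lambda_2(a)+2\eps\int_a^b\Lambda_2(t)\d t\leq C(b-a).
$$
Since $U_1(\tau,\tau)z_\tau=0$ forces $\Lambda_2(\tau)=0$, the Gronwall-type Lemma~\ref{lemma-new-gw} (with $q_1=0$, $q_2=C$) yields $\Lambda_2(t)\leq C$ for all $t\geq\tau$, and the claim follows from $\EE_2\leq\Lambda_2+C$.

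I expect the main obstacle to be the handling of the forcing at this higher regularity. Unlike in Lemma~\ref{bound-wt-1-3}, where the nonlinearity entered through a difference $f_0(u)-f_0(v)$ that freed one (small, exponentially decaying) power of $w$ and allowed $\pt u$ to be taken only in $\h$ via the dissipation integral of Lemma~\ref{lemma-diss-ut}, here $f_1=f$ provides no such structure: the bound $\|f'(u)\pt u\|\leq C$ rests squarely on the $\H_t^{1/3}$-regularity of the attractor just established, and one must carry the non-smooth force $g$ (hence $\gamma$) in $\h$ throughout — never differentiating it or hitting it with $A^{1/2}$ — so that every forcing contribution remains absorbable; arranging this alongside the usual ordered choice of $\varpi$ and then $\eps$ is the delicate part.
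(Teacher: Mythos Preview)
Your proposal is correct and follows the same route as the paper's proof: the same functional $\LL_2$, the same key bound $\|f'(u)\pt u\|\leq C$ obtained from the $\H_t^{1/3}$-regularity of the attractor via the embeddings $\h^{4/3}\subset L^{18}$ and $\h^{1/3}\subset L^{18/7}$, the same corrected functional $\Lambda_2=\LL_2+2\eps[\Phi+4\Psi]$ built from $(A^{1/2}w,A^{1/2}\pt w,A^{1/2}\zeta)$, and the same closure by Lemma~\ref{lemma-new-gw} with $q_1=0$. Your explicit care in keeping $\gamma=f(u)-g$ in $\h$ (since $g\notin\h^1$) and shifting the fractional power onto $w$ and $\zeta$ in the forcing terms of Lemmas~\ref{lemma-Phi-inc}--\ref{lemma-Psi} is exactly what is needed, and is implicit in the paper's ``by standard computations''.
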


\begin{proof}
We just sketch the proof, which is  completely
analogous to the one of Lemma~\ref{bound-wt-1-3}.
By standard multiplications
and Lemma~\ref{lemma-eta-norm}
with $\sigma=1$, the functional
$$\LL_2(t) = \|w(t)\|^2_2 + \|\pt w(t)\|^2_1
+\| \zeta^t \|^2_{\M^{1}_t}
+ 2\l f(u(t)), Aw(t)\r$$
is shown to fulfill the integral inequality
\begin{align}
\label{ineq-E2-b}
&\L_2(b)
-\int_a^b\!\!\int_0^\infty\big[\dot\mu_t(s)+\mu'_t(s)\big]\|\zeta^t(s)\|^2_{2}\d s\,\d t\\ \nonumber
&\quad\leq \L_2(a)+\frac{\eps}{2}\int_a^b\|w(t)\|_2^2\d t +\frac{C}{\eps}(b-a)
\end{align}
for every $b>a\geq \tau$ and every $\eps\in(0,1]$. Here, the only difference with
respect to the previous proof is the control
$$
2\l f'(u)\pt u, Aw\r\leq
C\big(1+\|u\|_{L^{18}}\big)\|\pt u\|_{L^{18/7}}\|Aw\|
\leq \frac{\eps}{2}\|w\|_2^2 +\frac{C}{\eps}.
$$
Then we introduce the functional
$$\Lambda_2(t)
= \L_2(t)
+ 2\eps[\Phi(t) +
4\Psi(t)],$$
where
$\Phi$ and $\Psi$ are defined as in Section \ref{SSAF}, using the triplet
$$(p,\pt p,\psi)=
(A^{1/2}w,A^{1/2}\pt w, A^{1/2}\zeta).$$
For $\eps$ sufficiently small,
\begin{equation}
\label{contr-U2-L2}
\EE_2(t) - C
\leq \Lambda_2(t)
\leq 3\EE_2(t) + C.
\end{equation}
Besides, exploiting Lemma \ref{lemma-Phi-inc} and
Lemma \ref{lemma-Psi} with $\gamma = f(u)$ and
$\varpi = 1/20$, by standard computations we obtain
\begin{align*}
&\Phi(b)+4\Psi(b)+\frac74\int_a^b\|w(t)\|^2_{2}\d t+2\int_a^b\|\pt w(t)\|^2_{1}\d t\\
&\leq \Phi(a)+4\Psi(a)
-4M\int_a^b\int_0^\infty[\dot\mu_t(s)+\mu_t'(s)] \|\zeta^t(s)\|^2_{2}\d s\,\d t\\
&\quad + C\int_a^b\kappa(t)\| \zeta^t \|^2_{\M_t^{1}}\d t
+ C(b-a).
\end{align*}
Adding this inequality with \eqref{ineq-E2-b}, and taking into account \eqref{contr-U2-L2},
we arrive at
$$
\Lambda_2(b)+2\eps \int_a^b\Lambda_2(t)\d t \leq \Lambda_2(a)+ \frac{C}{\eps}(b-a),
$$
up to fixing $\eps$ sufficiently small.
Hence, the Gronwall-type Lemma~\ref{lemma-new-gw} with $q_1= 0$ and $q_2= C$
applies. Since $\Lambda_2(\tau)=0$, this
gives (now $\eps$ is fixed)
$$\Lambda_2(t) \leq  C, \quad \forall t \geq \tau,$$
and a further exploitation of \eqref{contr-U2-L2} completes the argument.
\end{proof}

\subsection*{Proof of Theorem \ref{regMAIN-ATTRACTOR}}
Up to taking $r>0$ sufficiently large, Lemma \ref{lemma-wt-1} tells that
$$U_1(t,\tau)A_\tau \subset \BB_t^1(r), \quad\forall t\geq\tau.$$
Thus, for every $t\geq\tau$, we infer from
the invariance of $\att$ and~\eqref{dec-exp-A} that
$$\di_{\H_t}\big(A_t, \BB_t^1(r)\big)
=\di_{\H_t}\big(U(t,\tau)A_\tau, \BB_t^1(r)\big)
\leq C\e^{-\frac{\omega}{2}(t-\tau)}.$$
Since the inequality holds for every $\tau\in\R$,
letting $\tau\to-\infty$ we reach the conclusion
$$\di_{\H_t}\big(A_t, \BB_t^1(r)\big)=0
\quad\Rightarrow\quad
A_t\subset \BB_t^1(r).$$
Accordingly, the $\H^1_t$-norm of $A_t$ is bounded by $r$ for every $t\in\R$.
\qed

\section{Recovering Kelvin-Voigt: Proof of Theorem \ref{limitece}}
\label{SKV}

\noindent
By Remark \ref{NEWEQ}, for every $n$ the function
$w_n(\cdot)=u_n(\cdot+t_n)$
fulfills the equation
\begin{equation}
\label{mff2n}
\ptt w_n(t)+A w_n(t) +\int_0^\infty k_{t+t_n}(s)A\pt w_n(t-s)\d s + f(w_n(t)) = g.
\end{equation}
Besides, from the estimates of Theorem \ref{regMAIN-ATTRACTOR},
\begin{equation}
\label{u-Linfinito}
w_n \,\,\,\text{is bounded in }\, L^\infty(\R;\h^2)\cap W^{1,\infty}(\R;\h^1).
\end{equation}
Therefore, there exists
$$\hat u\in L^\infty(\R;\h^2)\cap W^{1,\infty}(\R;\h^1)$$
such that, up to a subsequence,
\begin{align*}
w_n \rightharpoonup \hat u & \quad\text{ weakly* in } L^\infty(\R;\h^2),\\
\pt w_n \rightharpoonup \pt \hat u & \quad\text{ weakly* in } L^\infty(\R;\h^1).
\end{align*}
Since
$$\lim_{n\to\infty} k_{t+t_n}=m\delta_0$$
in the sense of \eqref{deltaconv}, we deduce that, for every $T>0$,
$$\sup_{t\in [-T,T]}\int_0^\infty k_{t+t_n}(s)\d s \leq 2m,$$
for every $n$ sufficiently large (depending on $T$).
Hence, exploiting \eqref{hp1-f} and the uniform estimate \eqref{u-Linfinito}, by comparison in \eqref{mff2n} we
obtain, for all $n$ large,
$$\sup_{t\in [-T,T]}\,\|\ptt w_n(t)\|_{-1}\leq C,$$
for some $C>0$ (which is actually independent of $T$), and we conclude that
$$\pt w_n\,\,\,\text{is bounded in }\,L^\infty(-T,T;\h^1)\cap W^{1,\infty}(-T,T;\h^{-1}).$$
By the classical Simon-Aubin Theorem~\cite{SIM}, we have the compact embeddings
\begin{align*}
L^\infty(-T,T;\h^2)\cap W^{1,\infty}(-T,T;\h^1)&\Subset\C([-T,T],\h^1),\\
L^\infty(-T,T;\h^1)\cap W^{1,\infty}(-T,T;\h^{-1})&\Subset \C([-T,T],\h).
\end{align*}
Accordingly, the strong convergence
$$(w_n,\pt w_n)\to (\hat u,\pt \hat u)\quad\text{in } \C([-T,T],\h^1\times \h)$$
holds (up to a subsequence), implying in particular~\eqref{conv-cc}.
We are left to show that $(\hat u,\partial_t \hat u)$ solves the strongly damped wave equation, namely
$$
\ptt \hat u(t)+A \hat u(t) +A\pt \hat u(t) + f(\hat u(t)) = g.
$$
Indeed, we are going to prove that the equality above is recovered when passing to the limit as $n\to\infty$ in \eqref{mff2n},
the only nonstandard convergence being
$$\int_0^\infty k_{t+t_n}(s)A\pt w_n(t-s)\d s \to m A \pt \hat u(t).$$
This follows if we can show that, for an arbitrarily fixed $t$,
$$\mathfrak I_{n}(t)=\int_0^\infty k_{t+t_n}(s)\l \pt w_n(t-s),A\phi\r \d s \to m \l \pt \hat u(t),A\phi\r,$$
for any sufficiently regular $\phi$ (say, $\phi\in\h^2$).
To this end, let $\phi$ be fixed, and call for simplicity
$$\varphi_n(\cdot)= \l \pt w_n(\cdot),A\phi\r
\and
\varphi(\cdot)=\l \pt \hat u(\cdot),A\phi\r.$$
Then, we write
$$\mathfrak I_{n}(t)=\mathfrak I_{n}^1(t)+\mathfrak I_{n}^2(t)+\mathfrak I_{n}^3(t),$$
where
\begin{align*}
{\mathfrak I}_{n}^1(t)&=\int_0^1 k_{t+t_n}(s)\varphi(t-s)\d s,\\
{\mathfrak I}_{n}^2(t)&=\int_0^1 k_{t+t_n}(s)\big[\varphi_n(t-s)-\varphi(t-s)\big]\d s,\\
{\mathfrak I}_{n}^3(t)&=\int_1^\infty k_{t+t_n}(s)\varphi_n(t-s)\d s.
\end{align*}
Since $\varphi_n\in L^\infty(\R)$ uniformly with respect to $n$,
and $ \varphi_n\to \varphi$ in $\C(I)$ for every closed interval $I$, it is apparent from~\eqref{deltaconv} that
$$\big|{\mathfrak I}_{n}^2(t)+{\mathfrak I}_{n}^3(t)\big|
\leq \|\varphi_n-\varphi\|_{\C([t-1,t])}\int_0^1 k_{t+t_n}(s)\d s
+ \|\varphi_n\|_{L^\infty(\R)}\int_1^\infty k_{t+t_n}(s)\d s\to 0,
$$
while
$${\mathfrak I}_{n}^1(t)\to m\varphi(t),
$$
proving the required convergence.
\qed

\section{Two Memory Kernels of Physical Interest}
\label{STMK}

\noindent
In this final section we discuss two concrete examples of time-dependent memory kernels arising in the
physical applications, already introduced in~\cite{Timex}.

\subsection*{I. The rescaled kernel}
Let $\mu\in \C^1(\R^+)\cap L^1(\R^+)$ be a (nonnull and nonnegative) nonincreasing
function with $\mu(0)<\infty$.
Given a bounded positive function $\eps\in\C^1(\R)$ satisfying
$$
\dot\eps(t)\leq 0, \quad\forall t\in\R,
$$
we define the time-dependent rescaled kernel
$$\mu_t(s)
=\frac{1}{[\eps(t)]^2}\,\mu\bigg(\frac{s}{\eps(t)}\bigg).$$
According to \eqref{kkk}, the corresponding integrated memory kernel reads
$$k_t(s)
=\frac{1}{\eps(t)}\,k\bigg(\frac{s}{\eps(t)}\bigg)\qquad\text{where}\qquad
k(s)=\int_s^\infty \mu(y)\d y.
$$
In particular, assuming $k$ summable with total mass $m>0$, the most interesting situation
is when $\eps(t)\to 0$ as $t\to\infty$. In which case, we recover the
distributional convergence $k_t\to m\delta_0$ to (a multiple of) the Dirac mass at zero.
As shown in \cite{Timex}, this $\mu_t$ complies with {\bf (M1)}-{\bf (M3)}.
Here, we make two further assumptions: there exists $\varrho>0$ such that
\begin{equation}
\label{mu-delta}
\mu'(s) + \varrho\mu(s) \leq 0,\quad \forall s>0,
\end{equation}
and
\begin{equation}
\label{inf-der-eps}
\inf_{t \in \R} \dot\eps(t) > -\frac{\varrho}{2}.
\end{equation}

\begin{example}
For instance, a possible choice is the exponential kernel $\mu(s)=\e^{-s}$ and
$$\eps(t)=c\Big[\frac{\pi}{2}-\arctan(t)\Big],\quad
0<c<\frac12.$$
\end{example}

Our aim is showing that, under \eqref{mu-delta}-\eqref{inf-der-eps},
the rescaled kernel $\mu_t$ satisfies {\bf (M4)}-{\bf (M8)} as well.
To this end, we recall that
the derivatives of $\mu_t$
read
$$
\mu'_t(s)
=\frac{1}{[\eps(t)]^3}\mu'\bigg(\frac{s}{\eps(t)}\bigg)
\and
\dot\mu_t(s)
=-\frac{\dot\eps(t)}{\eps(t)}\,\big[2\mu_t(s)+s\mu'_t(s)\big].
$$
Besides,
$$\kappa(t)=
\int_0^\infty\mu_t(s)\d s
=\frac{\kappa}{\eps(t)},$$
being $\kappa=\int_0^\infty\mu(s)\d s$ the total mass of $\mu$.

\medskip
\noindent
{\bf -} {\it Verifying} {\bf (M4)}. On account of \eqref{inf-der-eps}, choose $\delta>0$
small enough that
$$\delta\kappa-2\dot\eps(t)\leq \varrho.$$
Then, for every $t\in\R$ and $s>0$,
\begin{align*}
\dot\mu_t(s)+\mu_t'(s)+\delta\kappa(t)\mu_t(s)
&=\bigg[1-\frac{s\dot\eps(t)}{\eps(t)}\bigg]\mu_t'(s)
+\big[\delta\kappa-2\dot\eps(t)\big]\frac{\mu_t(s)}{\eps(t)}\\
\noalign{\vskip1mm}
&\leq \mu_t'(s)+\frac\varrho{\eps(t)}\mu_t(s)\\
&=\frac{1}{[\eps(t)]^3}\bigg[\mu'\bigg(\frac{s}{\eps(t)}\bigg)
+\varrho\mu\bigg(\frac{s}{\eps(t)}\bigg)\bigg]\leq 0.
\end{align*}
The latter inequality follows from \eqref{mu-delta}.

\medskip
\noindent
{\bf -} {\it Verifying} {\bf (M5)}. Simply observe that
$$\inf_{t\in\R}\kappa(t)=\lim_{t\to-\infty}
\frac{\kappa}{\eps(t)}>0.
$$

\medskip
\noindent
{\bf -} {\it Verifying} {\bf (M6)}. In the light of {\bf (M1)},
an integration by parts readily gives
$$\int_0^\infty s\mu'_t(s)\d s =-\kappa(t).$$
Hence, recalling that $\dot\eps\leq 0$ and $\mu_t'\leq 0$,
$$
\int_0^\infty|\dot\mu_t(s)|\,\d s
=-\frac{\dot\eps(t)}{\eps(t)}
\int_0^\infty|2\mu_t(s)+s\mu'_t(s)|\,\d s
\leq-\frac{3\kappa(t)\dot\eps(t)}{\eps(t)}
=-\frac{3\kappa \dot\eps(t)}{[\eps(t)]^2},
$$
and exploiting \eqref{inf-der-eps},
$$\sup_{t\in\R}\frac1{[\kappa(t)]^2}\int_0^\infty
|\dot\mu_t(s)|\,\d s
\leq -\inf_{t\in\R}\frac{3\dot\eps(t)}{\kappa}
<\frac{3\varrho}{2\kappa}.$$

\medskip
\noindent
{\bf -} {\it Verifying} {\bf (M7)}.
By direct calculations,
$$\frac{\mu_t(0)}{[\kappa(t)]^2}
=\frac{\mu(0)}{\kappa^2},\quad\forall t\in\R.$$

\medskip
\noindent
{\bf -} {\it Verifying} {\bf (M8)}.
For $t\in[a,b]$ and $\nu>0$, we have
$$\int_\nu^{1/\nu}\mu_t(s)\d s=\bigg(\frac{1}{\kappa}
\int_{\nu/\eps(t)}^{1/\nu\eps(t)}\mu(s)\d s\bigg)\kappa(t)
\geq \bigg(\frac{1}{\kappa}
\int_{\nu/\eps(a)}^{1/\nu\eps(b)}\mu(s)\d s\bigg)\kappa(t).$$
When $\nu\to 0$, the right-hand side converges to $\kappa(t)$.

\subsection*{II. The rheological kernel}
The second example of time-dependent memory kernel comes from a physical derivation of
equation \eqref{mff} via a rheological model for aging materials (see \cite[Appendix]{Timex} for details).
It has the form
$$\mu_t(s)=\frac{1}{\varrho\gamma}
\Ks_0(t)\Ks_0(t-s)\e^{-\frac1\gamma\int_0^s\Ks_0(t-y)\d y},$$
where $\varrho,\gamma$ are positive constants and  $\Ks_0\in\C^1(\R)$
is a nondecreasing function such that
\begin{equation}
\label{inf-K0}
\lim_{t\to -\infty}\Ks_0(t)= \beta>0.
\end{equation}
The corresponding integrated memory kernel given by \eqref{kkk} reads
$$k_t(s)=\frac{1}{\varrho}
\Ks_0(t)\e^{-\frac1\gamma\int_0^s\Ks_0(t-y)\d y}.$$
Here, the interesting situation is when $\Ks_0(t)\to\infty$ as $t\to\infty$,
translating the physical assumption that the spring in the Maxwell element of the rheological model
becomes completely rigid in the longtime. In which case,
quite remarkably, we have the distributional convergence (see \cite{Timex})
$$k_t\to\frac\gamma\varrho\delta_0 \qquad\text{as }\,t\to\infty.$$
In the same paper, the rheological kernel $\mu_t$ is shown to satisfy
{\bf (M1)}-{\bf (M3)}. In order to verify the remaining axioms {\bf (M4)}-{\bf (M8)},
a further assumptions is needed: there exists a positive constant
$\Mr<\beta/\gamma$ such that
\begin{equation}
\label{K0-subexp}
\dot\Ks_0(t)\leq \Mr \Ks_0(t),\quad \forall t\in\R.
\end{equation}
Loosely speaking, \eqref{K0-subexp} prevents $\Ks_0$ to grow ``too fast".
On the other hand, $\Ks_0$ can even be an exponential.

\begin{example}
The exponential function
$$\Ks_0(t)=1+\e^{\alpha t}$$
fulfills our hypotheses, provided that $0<\alpha<1/\gamma$.
\end{example}

First, let us write explicitly the derivatives of $\mu_t$. Namely,
$$ \mu_t'(s)
=-\frac{\Ks_0(t)}{\varrho\gamma}\bigg[\dot{\Ks}_0(t-s)+
\frac1\gamma[\Ks_0(t-s)]^2\bigg]\e^{-\frac1\gamma\int_0^s \Ks_0(t-y)\d y}$$
and
$$\begin{aligned}
\dot\mu_t(s)&=
\frac{1}{\varrho\gamma}\bigg[\dot{\Ks}_0(t)\Ks_0(t-s)+ \Ks_0(t)\dot{\Ks}_0(t-s)\\
\noalign{\vskip1mm}
&\qquad\quad- \frac{1}{\gamma}[\Ks_0(t)]^2 \Ks_0(t-s)
+\frac{1}{\gamma}\Ks_0(t)[\Ks_0(t-s)]^2\bigg]\e^{-\frac1\gamma\int_0^s \Ks_0(t-y)\d y}.
\end{aligned}$$
Besides,
$$\kappa(t)=\int_0^\infty\mu_t(s)\d s = \frac{\textsf{K}_0(t)}{\varrho}.$$

\medskip
\noindent
{\bf -} {\it Verifying} {\bf (M4)}.
By virtue of \eqref{K0-subexp}, let $0<\delta\leq\varrho/\gamma$ to be properly chosen.
By explicit calculations,
$$\dot\mu_t(s)+\mu_t'(s)+\delta\kappa(t)\mu_t(s)
=\frac{\Ks_0(t-s)}{\varrho^2\gamma^2}\,{\textsf{F}}(t)
\e^{-\frac1\gamma\int_0^s \Ks_0(t-r)\d r},
$$
where
$${\textsf{F}}(t)=\varrho\gamma\dot\Ks_0(t)+(\delta\gamma-\varrho)[ \Ks_0(t)]^2.
$$
The claim amounts to showing that ${\textsf{F}}\leq 0$. Indeed, since
$\Ks_0\geq \beta$ and $\delta\gamma-\varrho\leq 0$, we infer from \eqref{K0-subexp} that
$${\textsf{F}}(t)\leq \big[\varrho\gamma \Mr+(\delta\gamma-\varrho)\beta\big]\Ks_0(t)\leq 0,
$$
as long as we fix $\delta>0$ small enough that
$$\delta\leq \frac{\varrho\big(\beta-\gamma\Mr\big)}
{\gamma\beta}.$$
This is possible because $\beta-\gamma\Mr>0$.

\medskip
\noindent
{\bf -} {\it Verifying} {\bf (M5)}. Due to \eqref{inf-K0} and the fact that $\dot\Ks_0\geq 0$,
$$\inf_{t\in\R}\kappa(t)
=\frac{1}{\varrho}\inf_{t\in\R}\Ks_0(t)=\frac{\beta}{\varrho}>0.$$

\medskip
\noindent
{\bf -} {\it Verifying} {\bf (M6)}.
Since $\dot\Ks_0\geq 0$ and \eqref{inf-K0}-\eqref{K0-subexp} hold,
$$|\dot\mu_t(s)|\leq\frac{2[\Ks_0(t)]^2}{\varrho\gamma}
\bigg(\frac{\Mr}{\beta}+\frac1\gamma\bigg)\Ks_0(t-s)\e^{-\frac1\gamma\int_0^s \Ks_0(t-r)\d r}.
$$
Moreover, as the positive function $\Ks_0$ remains away from zero,
$$\int_0^\infty \Ks_0(t-s)\e^{-\frac1\gamma\int_0^s \Ks_0(t-r)\d r}\d s=
\gamma-\gamma\lim_{s\to\infty}\e^{-\frac1\gamma\int_0^s \Ks_0(t-r)\d r}\d s=\gamma.
$$
Accordingly,
$$\int_0^\infty |\dot\mu_t(s)|\d s\leq\frac{2[\Ks_0(t)]^2}{\varrho}\bigg(\frac{\Mr}{\beta}+\frac1\gamma\bigg).
$$
Hence,
$$\sup_{t\in\R}\frac1{[\kappa(t)]^2}\int_0^\infty |\dot\mu_t(s)|\,\d s \leq 2\varrho\bigg(\frac{\Mr}{\beta}+\frac1\gamma\bigg).
$$

\medskip
\noindent
{\bf -} {\it Verifying} {\bf (M7)}.
By the very definition of $\mu_t$,
$$\frac{\mu_t(0)}{[\kappa(t)]^2}=\frac{\varrho}{\gamma},\quad\forall t\in\R.$$

\medskip
\noindent
{\bf -} {\it Verifying} {\bf (M8)}.
Recalling the form of $k_t(s)$, thanks to \eqref{inf-K0} we easily get
$$
\int_\nu^{1/\nu}\mu_t(s)\d s=\Big[\e^{-\frac1\gamma\int_0^\nu\Ks_0(t-y)\d y}-
\e^{-\frac1\gamma\int_0^{1/\nu}\Ks_0(t-y)\d y}\Big]\frac{\Ks_0(t)}{\varrho}\,\to \kappa(t)
$$
as $\nu\to 0$, uniformly as $t\in[a,b]$.

\section*{Appendix A\\ Proof of Lemma \ref{lemma-Phi-inc}}

\theoremstyle{plain}
\newtheorem{theoremAPPA}{Theorem}[section]
\renewcommand{\thetheoremAPPA}{A.\arabic{theoremAPPA}}
\setcounter{equation}{0}
\setcounter{subsection}{0}
\renewcommand{\theequation}{A.\arabic{equation}}

\noindent
Let $b>a\geq \tau$ be arbitrarily fixed.
In what follows $(p,\pt p,\psi)$ will be a
solution to problem \eqref{sis-p-psi}-\eqref{in-cond-p},
as regular as needed. In particular,
we assume $\psi_\tau\in \D(\T_\tau)\cap \C^1(\R^+,\h^1)$
and $p\in W^{2,\infty}(a,b; \h^{1})$.
In addition, the derivative $\psi_\tau'$ of the initial datum $\psi_\tau$ is required
to fulfill $\psi'_\tau\in L^\infty(0,s_0;\h)$ for every $s_0>0$.
In fact, since we are always working in a Galerkin regularization scheme, this is no loss of generality.
According to~\cite[Section 5]{Timex},
we can differentiate \eqref{rep-p-psi} with respect to $s$ in the weak sense,
so obtaining
\begin{equation}
\label{DERPSI}
\ps\psi^t(s) =
\begin{cases}
\pt p(t-s),                  &s \leq t-\tau,\\
\psi'_\tau(s-t+\tau),        &s > t-\tau.
\end{cases}
\end{equation}
Besides, $\psi^t$ belongs to $\D(\T_\tau)$ for every $t\geq\tau$
and satisfies the differential equality in $\M_\tau$
\begin{equation}
\label{mem-p-psi}
\pt\psi^t = \T_\tau\psi^t + \pt p(t).
\end{equation}

The proof of Lemma \ref{lemma-Phi-inc} will be carried out in a number of steps.
The generic constant $C>0$ appearing in the next lines is understood to be independent of
the interval $[a,b]$.
Instead, we will denote by $D>0$ a generic constant depending explicitly on $[a,b]$.

\subsection*{I. The approximating functional}
We  introduce a family of approximate memory kernels
with compact support in $\R^+$.
For $\eps>0$ small, we define the cut-off function
$$\phi_\eps(s)=
\begin{cases}
0 & \text{if } 0\leq s<\eps,\\
s/\eps-1 & \text{if } \eps\leq s<2\eps,\\
1 & \text{if } 2\eps\leq s\leq 1/\eps,\\
2-\eps s & \text{if }  1/\eps\leq s<2/\eps,\\
0  & \text{if } 2/\eps\leq s,
\end{cases}
$$
along with the approximate kernel
$$\mu_t^\eps(s)=\phi_\eps(s)\mu_t(s)\leq \mu_t(s).$$
Setting
$$\kappa_\eps(t)=\int_0^\infty\mu_t^\eps(s)\d s\leq \kappa(t),$$
we introduce the functional
$$\Psi_\eps(t)= -\frac{2}{\kappa_\eps(t)}
\int_0^\infty\mu_t^\eps(s)\l\psi^t(s),\pt p(t)\r\d s,$$
and we define
$$R_\eps(t)=\bigg[\frac{\kappa(t)}{\kappa_\eps(t)}\bigg]^2\geq 1.$$
Owing to {\bf (M8)}, for all $\eps$ sufficiently small (depending on
$a$ and $b$), we have
\begin{equation}
\label{PRIMOR}
\sup_{t\in [a,b]}R_\eps(t)\leq 4.
\end{equation}
The time-derivative of $\Psi_\eps$ is given by
\begin{equation}
\label{PRIMO}
\ddt\Psi_\eps(t)={\mathfrak I}_1(t)+{\mathfrak I}_2(t),
\end{equation}
where\footnote{We omit for simplicity of notation the dependence on $t$
of the variables $p$ and $\psi$.}
\begin{align*}
{\mathfrak I}_1(t)=& -2\ddt\bigg[\frac{1}{\kappa_\eps(t)}\bigg]
\int_0^\infty\mu_t^\eps(s)\l \psi(s),\pt p\r\d s,\\
{\mathfrak I}_2(t)=&-\frac{2}{\kappa_\eps(t)}
\ddt\int_0^\infty\mu_t^\eps(s)\l \psi(s),\pt p\r\d s.
\end{align*}

\subsection*{II. Estimating $\boldsymbol{{\mathfrak I}_1}$}
We first observe that
$$\ddt\bigg[\frac{1}{\kappa_\eps(t)}\bigg]
=-\frac{1}{[\kappa_\eps(t)]^2}\ddt \int_0^\infty\mu_t^\eps(s)
=-\frac{1}{[\kappa_\eps(t)]^2}\int_0^\infty\dot\mu_t^\eps(s).$$
Passing the derivative through the integral sign is allowed by a classical result. Indeed,
we know from {\bf (M1)} and {\bf (M3)} that
\begin{itemize}
\item[-] the map $s\mapsto \mu_t^\eps(s)$ is summable on $\R^+$ for every fixed $t$;
\item[-] for almost every fixed $s>0$, the map $t\mapsto \mu_t^\eps(s)$
is differentiable for all $t\in\R$;
\item[-] for every fixed closed interval $I\subset \R$ and almost every $t\in I$,
$$|\dot \mu_t^\eps(s)| \leq \|\dot \mu\|_{L^\infty(I\times [\eps,1/\eps])}\,\chi_{[\eps,1/\eps]}(s)\in L^1(\R^+).$$
\end{itemize}
Accordingly, thanks to {\bf (M6)} and \eqref{PRIMOR},
\begin{align}
\label{est1-Psi}
{\mathfrak I}_1(t)
&\leq 2R_\eps(t)\|\pt p\|
\frac1{[\kappa(t)]^2}\int_0^\infty|\dot\mu_t(s)|\d s
\int_0^\infty\mu_t(s)\|\psi(s)\|\d s \\
\notag
&\leq \frac12R_\eps(t)\|\pt p\|^2 +
C\kappa(t)\|\psi\|_{\M_t}^2.
\end{align}

\subsection*{III. Estimating $\boldsymbol{{\mathfrak I}_2}$}
Passing again the derivative within the integral (arguing as
before), we write
$${\mathfrak I}_2(t)={\mathfrak F}(t)-2\|\pt p\|^2
-\frac{2\sqrt{R_\eps(t)}\,}{\kappa(t)}
\int_0^\infty\mu_t^\eps(s)\l\psi(s),\ptt p\r \d s,$$
where we set
$$
{\mathfrak F}(t)=2\|\pt p\|^2-\frac{2}{\kappa_\eps(t)}
\bigg[\int_0^\infty\dot\mu_t^\eps(s)\l\psi(s),\pt p\r\d s
+\int_0^\infty\mu_t^\eps(s)\l\pt\psi(s),\pt p\r \d s\bigg].
$$
We preliminarily observe that, exploiting \eqref{mem-p-psi},
\begin{align*}
\int_0^\infty\mu_t^\eps(s)
\l\pt\psi(s),\pt p\r\d s
&=
-\int_0^\infty\mu_t^\eps(s)\l\ps\psi(s),\pt p\r\d s +\kappa_\eps(t)\|\pt p\|^2\\
&=\int_0^\infty(\mu^\eps_t)'(s)\l\psi(s),\pt p\r\d s+\kappa_\eps(t)\|\pt p\|^2.
\end{align*}
The latter equality follows from an integration by parts, where the boundary terms
are easily seen to vanish. Hence,
$$
{\mathfrak F}(t)
=-\frac{2}{\kappa_\eps(t)}\int_0^\infty[\dot\mu_t^\eps(s)+(\mu_t^\eps)'(s)]\l\psi(s),\pt p\r\d s.
$$
At this point, we further decompose ${\mathfrak F}$ into the sum
$${\mathfrak F}(t)={\mathfrak F}_1(t)+{\mathfrak F}_2(t),$$
where
\begin{align*}
{\mathfrak F}_1(t)=& -\frac{2}{\kappa_\eps(t)}
\int_0^\infty\phi_\eps(s)[\dot\mu_t(s)+\mu_t'(s)]\l\psi(s),\pt p\r\d s,\\
{\mathfrak F}_2(t)=&-\frac{2}{\kappa_\eps(t)}
\int_0^\infty\phi_\eps'(s)
\mu_t(s)\l\psi(s),\pt p\r\d s.
\end{align*}
We estimate these two terms separately.

\smallskip
\noindent
{\bf -}
Since $\dot\mu + \mu'\leq0$,
\begin{align*}
{\mathfrak F}_1(t)
&\leq
2\sqrt{R_\eps(t)}\,\|\pt p\|\,\frac{1}{\kappa(t)}
\int_0^\infty -[\dot\mu_t(s)+\mu_t'(s)]\|\psi(s)\|\d s \\
&\leq \frac12R_\eps(t)\|\pt p\|^2
+\frac{C}{[\kappa(t)]^2}
\bigg[\int_0^\infty -[\dot\mu_t(s)+\mu_t'(s)]\|\psi(s)\|_1\d s\bigg]^2.
\end{align*}
Moreover, owing to {\bf (M6)}-{\bf (M7)},
\begin{align*}
&\frac{C}{[\kappa(t)]^2}\bigg[\int_0^\infty-[\dot\mu_t(s)+\mu_t'(s)]\|\psi(s)\|_1\d s\bigg]^2\\
&\quad\leq
\frac{C}{[\kappa(t)]^2}\int_0^\infty -[\dot\mu_t(s)+\mu_t'(s)]\d s
\int_0^\infty -[\dot\mu_t(s)+\mu_t'(s)]\|\psi(s)\|^2_1\d s\\
&\quad\leq
C\bigg[\frac1{{[\kappa(t)]^2}}\int_0^\infty|\dot\mu_t(s)|\d s+
\frac{\mu_t(0)}{[\kappa(t)]^2}\bigg]
\int_0^\infty -[\dot\mu_t(s)+\mu_t'(s)]\|\psi(s)\|^2_1\d s\\
&\quad\leq - M
\int_0^\infty[\dot\mu_t(s)+\mu_t'(s)]\|\psi(s)\|^2_1\d s,
\end{align*}
for some $M>0$, independent of $[a,b]$. Therefore, we end up with the estimate
$${\mathfrak F}_1(t)\leq
\frac12R_\eps(t)\|\pt p\|^2- M
\int_0^\infty[\dot\mu_t(s)+\mu_t'(s)]\|\psi(s)\|^2_1\d s.$$

\smallskip
\noindent
{\bf -}
Since
$$|\phi_\eps'(s)|\leq \frac{1}{\eps}\chi_{[\eps,2\eps]}(s)+\eps,$$
making use of {\bf (M5)}, \eqref{PRIMOR} and the control $\pt p\in L^\infty(a,b;\h)$, we get
$$
{\mathfrak F}_2(t)\leq D\int_0^\infty r_\eps(t,s)\d s,\quad\forall t\in[a,b],$$
having set
\begin{equation}
\label{ERREEPS}
r_\eps(t,s)=
\bigg(\frac{1}{\eps}\chi_{[\eps,2\eps]}(s)+\eps\bigg)\mu_t(s)\|\psi^t(s)\|.
\end{equation}

\smallskip
\noindent
Collecting the two inequalities, we conclude that
$${\mathfrak F}(t)\leq \frac12R_\eps(t)\|\pt p\|^2- M
\int_0^\infty[\dot\mu_t(s)+\mu_t'(s)]\|\psi(s)\|^2_1\d s
+D\int_0^\infty r_\eps(t,s)\d s.$$
In turn, this yields
\begin{align}
\label{est2-Psi}
{\mathfrak I}_2(t)
&\leq -\bigg(2-\frac12R_\eps(t)\bigg)\|\pt p\|^2
- M\int_0^\infty[\dot\mu_t(s)+\mu_t'(s)]\|\psi(s)\|^2_1\d s\\
\notag
&\quad +D\int_0^\infty r_\eps(t,s)\d s
-\frac{2\sqrt{R_\eps(t)}\,}{\kappa(t)}
\int_0^\infty\mu_t^\eps(s)\l\psi(s),\ptt p\r \d s.
\end{align}

\subsection*{IV. The integral inequality}
Plugging \eqref{est1-Psi} and \eqref{est2-Psi}
into \eqref{PRIMO}, and integrating on $[a,b]$, we are led to
\begin{align*}
&\Psi_\eps(b)+\int_a^b \big[2-R_\eps(t)\big]\|\pt p(t)\|^2\d t\\
&\quad\leq  \Psi_\eps(a)-M\int_a^b
\int_0^\infty[\dot\mu_t(s)+\mu_t'(s)]
\|\psi^t(s)\|^2_1\d s\,\d t
+C\int_a^b \kappa(t)\|\psi^t \|^2_{\M_t}\d t\\
&\quad\quad
+ D\int_a^b \int_0^\infty r_\eps(t,s)\d s\,\d t
-\int_a^b \frac{2\sqrt{R_\eps(t)}\,}{\kappa(t)}
\int_0^\infty\mu_t^\eps(s)\l\psi^t(s),\ptt p(t)\r\d s\,\d t.
\end{align*}
The next step is to let $\eps\to 0$, so to obtain
\begin{align}
\label{est3-Psi}
&\Psi(b)+\int_a^b\|\pt p(t)\|^2\d t\\
\notag
&\quad\leq \Psi(a)-M\int_a^b
\int_0^\infty[\dot\mu_t(s)+\mu_t'(s)]
\|\psi^t(s)\|^2_1\d s\,\d t+C\int_a^b \kappa(t)\|\psi^t \|^2_{\M_t}\d t\\
\notag
&\quad\quad
-\int_a^b \frac{2}{\kappa(t)}
\int_0^\infty\mu_t(s)\l\psi^t(s),\ptt p(t)\r\d s\,\d t.
\end{align}
This will follow from a repeated use of the
Dominated Convergence Theorem (DCT).

\smallskip
\noindent
{\bf -} We first show that $\Psi_\eps(t)\to\Psi(t)$ as $\eps\to 0$.
Indeed, for any fixed $t\in [a,b]$,
it is apparent that $\kappa_\eps(t)\to\kappa(t)$, and
$$\mu_t^\eps(s)\l\psi^t(s),\pt p(t)\r\to \mu_t(s)\l\psi^t(s),\pt p(t)\r,\quad\forall s>0.$$
Since
$$\mu_t^\eps(s)|\l\psi^t(s),\pt p(t)\r|\leq
\mu_t(s)\|\psi^t(s)\|\|\pt p(t)\|\in L^1(\R^+),
$$
the claim follows from the DCT.

\smallskip
\noindent
{\bf -} For every fixed $t\in [a,b]$, we have the convergence $R_\eps(t)\to 1$.
Keeping in mind \eqref{PRIMOR}, from the DCT we easily infer that
$$\int_a^b \big[2-R_\eps(t)\big]\|\pt p(t)\|^2\d t\to \int_a^b \|\pt p(t)\|^2\d t.$$

\smallskip
\noindent
{\bf -}
Similarly, by applying the DCT on $[a,b]\times\R^+$, we prove that
$$\int_a^b \frac{2\sqrt{R_\eps(t)}\,}{\kappa(t)}
\int_0^\infty\mu_t^\eps(s)\l\psi^t(s),\ptt p(t)\r\d s\,\d t
\to \int_a^b \frac{2}{\kappa(t)}
\int_0^\infty\mu_t(s)\l\psi^t(s),\ptt p(t)\r\d s\,\d t.$$
Indeed, on account of {\bf (M5)}, \eqref{PRIMOR} together with
the control $\ptt p\in L^\infty(a,b;\h)$,
$$\bigg| \frac{2\sqrt{R_\eps(t)}\,}{\kappa(t)}
\mu_t^\eps(s)\l\psi^t(s),\ptt p(t)\r\bigg|\leq D\mu_t(s)\|\psi^t(s)\|_1,$$
and
$$\int_0^\infty\mu_t(s)\|\psi^t(s)\|_1\d s\leq \sqrt{\kappa(t)}\,\|\psi^t\|_{\M_t}\in L^1(a,b),$$
due to Remark \ref{int-eta}.

\smallskip
\noindent
{\bf -} We are left to show that
$$\int_a^b \int_0^\infty r_\eps(t,s)\d s\,\d t\to 0.$$
Recalling \eqref{ERREEPS},
we immediately draw the pointwise convergence
$$r_\eps(t,s)\to 0,\quad\forall (t,s)\in [a,b]\times\R^+.$$
By virtue of \eqref{DERPSI} and the hypotheses on $p$ and $\psi_\tau$,
we learn that
$$\|\ps\psi^t\|^2_{L^\infty(0,2;\h)}\leq D,\quad\forall t\in[a,b].$$
Therefore, as $\psi^t\in\D(\T_\tau)$,
$$
\|\psi^t(s)\|\leq \int_0^s\|\partial_s \psi^t(y)\|\d y
\leq Ds,\quad\forall s\in(0,2].
$$
This allows us to obtain the estimate in $[a,b]\times\R^+$
$$
r_\eps(t,s)\leq
\frac{Ds}{\eps}\chi_{[\eps,2\eps]}(s)\mu_t(s)+D\mu_t(s)\|\psi^t(s)\|
\leq D\mu_t(s)+D\mu_t(s)\|\psi^t(s)\|_1.
$$
We already saw that
$$(t,s)\mapsto \mu_t(s)\|\psi^t(s)\|_1\in L^1([a,b]\times\R^+).$$
In order to conclude,
we exploit {\bf (M2)} to get
$$\int_a^b \int_0^\infty \mu_t(s)\d s\,\d t
\leq \kappa(\tau)\int_a^b K_\tau(t)\d t<\infty.
$$
Hence, the DCT applies.

\subsection*{V. Conclusion of the proof}
Using equation \eqref{sis-p-psi},
for every $t\in [a,b]$ we have
\begin{align*}
&-\frac{2}{\kappa(t)}
\int_0^\infty\mu_t(s)\l\psi(s),\ptt p\r \d s\\
&\quad=\frac{2}{\kappa(t)}
\int_0^\infty\mu_t(s)\l\psi(s),p\r_1\d s+ \frac{2}{\kappa(t)}
\bigg\|\int_0^\infty\mu_t(s)A^{1/2}\psi(s)\d s\bigg\|^2\\
&\quad\quad+\frac{2}{\kappa(t)}
\int_0^\infty\mu_t(s)\l\psi(s),\gamma\r\d s.
\end{align*}
Then, for any $\varpi\in (0,1]$, we readily deduce from
{\bf (M5)} the estimate
$$-\frac{2}{\kappa(t)}
\int_0^\infty\mu_t(s)\l\psi(s),\ptt p\r \d s
\leq\varpi\|p\|_1^2
+\frac{C}{\varpi}\kappa(t)\|\psi\|^2_{\M_t}
+\frac{2}{\kappa(t)}
\int_0^\infty\mu_t(s)\l\psi(s),\gamma\r\d s.
$$
Integrating the inequality on $[a,b]$, and
substituting the result into~\eqref{est3-Psi}, the proof of Lemma
\ref{lemma-Psi} is finished.
\qed

\section*{Appendix B\\ A Gronwall-Type Lemma in Integral Form}

\theoremstyle{plain}
\newtheorem{lemmaAPP}{Lemma}[section]
\renewcommand{\thelemmaAPP}{B.\arabic{lemmaAPP}}
\setcounter{equation}{0}
\setcounter{subsection}{0}
\renewcommand{\theequation}{B.\arabic{equation}}

\begin{lemmaAPP}
\label{lemma-new-gw}
Let $\tau\in\R$ be fixed, and let $\Lambda:[\tau,\infty)\to\R$ be a continuous
function. Assume that for some $\eps>0$ and every
$b>a\geq\tau$ the integral inequality
\begin{equation}
\label{phi1}
\Lambda(b) + 2\eps\int_a^b\Lambda(y)\d y
\leq \Lambda(a)+\int_a^b q_1(y)\Lambda(y)\d y + \int_a^b q_2(y)\d y
\end{equation}
holds, where $q_1,q_2$
are locally summable nonnegative functions on $[\tau,\infty)$ satisfying
$$
\int_a^b q_1(y)\d y \leq \eps (b-a)+c_1
\and\sup_{t\geq\tau}\int_t^{t+1}q_2(y)\d y \leq c_2,
$$
for some $c_1,c_2\geq 0$.
Then
$$\Lambda(t)\leq \e^{c_1}|\Lambda(\tau)|\e^{-\eps(t-\tau)}+\frac{c_2\e^{c_1}\e^{\eps}}{1-\e^{-\eps}}$$
for every $t\geq\tau$.
\end{lemmaAPP}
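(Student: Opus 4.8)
The plan is to convert the integral inequality \eqref{phi1} into a pointwise estimate by means of an integrating factor; the only subtlety is that $\Lambda$ is merely continuous (not absolutely continuous), which I would circumvent by first extracting from \eqref{phi1} a monotonicity property. \textbf{Step 1 (monotone decomposition).} I would rewrite \eqref{phi1}, for every $\tau\le a<b$, as
$$
\Lambda(b)-\Lambda(a)\le\int_a^b\big[(q_1(y)-2\eps)\Lambda(y)+q_2(y)\big]\,\d y,
$$
and set $G(t)=\int_\tau^t[(q_1(y)-2\eps)\Lambda(y)+q_2(y)]\,\d y$, which is absolutely continuous since $\Lambda$ is continuous, hence locally bounded, and $q_1,q_2$ are locally summable. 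The displayed inequality then says exactly that $h:=\Lambda-G$ is continuous and nonincreasing on $[\tau,\infty)$; in particular $\Lambda$ is locally of bounded variation, with Lebesgue--Stieltjes measure $\d\Lambda=\d h+G'\,\d y$ and $\d h\le0$.

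\textbf{Step 2 (integrating factor and a pointwise bound).} Next I would introduce $\beta(t)=\int_\tau^t(2\eps-q_1(y))\,\d y$, so that $\beta$ is absolutely continuous, $\beta(\tau)=0$ and $\beta'=2\eps-q_1$ a.e. Integration by parts for the product $\e^{\beta}\Lambda$ — legitimate because $\e^{\beta}$ is $C^1$ while $\Lambda$ is continuous of bounded variation, so the two factors have no common discontinuity — combined with the identity $\beta'\Lambda+G'=q_2$, gives for all $\tau\le a<b$
$$
\e^{\beta(b)}\Lambda(b)-\e^{\beta(a)}\Lambda(a)=\int_a^b\e^{\beta(y)}\,\d h(y)+\int_a^b\e^{\beta(y)}q_2(y)\,\d y\le\int_a^b\e^{\beta(y)}q_2(y)\,\d y,
$$
where the inequality uses $\e^{\beta}>0$ and $\d h\le0$. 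Taking $a=\tau$ and dividing by $\e^{\beta(t)}$ yields $\Lambda(t)\le\e^{-\beta(t)}\Lambda(\tau)+\int_\tau^t\e^{\beta(y)-\beta(t)}q_2(y)\,\d y$.

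\textbf{Step 3 (inserting the hypotheses).} From $\int_y^t q_1\le\eps(t-y)+c_1$ one gets $\beta(t)-\beta(y)\ge\eps(t-y)-c_1$ for every $\tau\le y\le t$, hence $\e^{-\beta(t)}\le\e^{c_1}\e^{-\eps(t-\tau)}$ and $\e^{\beta(y)-\beta(t)}\le\e^{c_1}\e^{-\eps(t-y)}$. Using $q_2\ge0$ and $\Lambda(\tau)\le|\Lambda(\tau)|$ this gives
$$
\Lambda(t)\le\e^{c_1}|\Lambda(\tau)|\e^{-\eps(t-\tau)}+\e^{c_1}\int_\tau^t\e^{-\eps(t-y)}q_2(y)\,\d y.
$$
To estimate the last integral I would split $[\tau,t]$ into the intervals $[t-j-1,t-j]\cap[\tau,t]$, $j\ge0$: on each of them $\e^{-\eps(t-y)}\le\e^{-\eps j}$ and $\int q_2\,\d y\le c_2$ by hypothesis, so the integral is at most $c_2\sum_{j\ge0}\e^{-\eps j}=c_2/(1-\e^{-\eps})\le c_2\e^{\eps}/(1-\e^{-\eps})$, which is precisely the asserted bound.

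I expect the only genuinely delicate point to be the integration by parts in Step 2, where $\Lambda$ is not absolutely continuous; the monotone-plus-absolutely-continuous splitting $\Lambda=h+G$ of Step 1 is precisely what makes it rigorous, and it crucially relies on \eqref{phi1} being postulated for \emph{all} pairs $a<b$, not only for $a=\tau$. Should one prefer to avoid Lebesgue--Stieltjes measures, the conclusion of Step 2 can instead be reached by discretizing \eqref{phi1} over a fine partition and passing to the limit, at the price of heavier bookkeeping.
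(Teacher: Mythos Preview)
Your argument is correct and, in fact, delivers a slightly sharper constant than the statement requires. The route, however, is genuinely different from the paper's. The paper proceeds by a comparison principle: for an auxiliary parameter $\nu\in(0,\eps)$ it constructs an explicit function $\Pi(t)=(|\Lambda(\tau)|+\nu)E(t,\tau)+\int_\tau^t q_2(y)E(t,y)\,\d y$ with $E(t,y)=\exp\big[-(2\eps-\nu)(t-y)+\int_y^t q_1\big]$, checks that $\Pi$ satisfies the \emph{exact} integral identity corresponding to \eqref{phi1}, and then shows $\Lambda<\Pi$ by contradiction (first crossing time), finally letting $\nu\to 0$. This avoids any appeal to the BV structure of $\Lambda$ or to Lebesgue--Stieltjes integration, at the price of the extra parameter and the limiting step. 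Your approach instead exploits that \eqref{phi1} holding for \emph{all} $a<b$ forces $\Lambda-G$ to be nonincreasing, which immediately puts $\Lambda$ in $BV_{\mathrm{loc}}$ and makes the integrating-factor computation rigorous; this is more direct and even yields $c_2/(1-\e^{-\eps})$ rather than $c_2\e^{\eps}/(1-\e^{-\eps})$ for the second term. Either method is fine; yours trades the comparison argument for a modest amount of measure theory.
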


\begin{proof}
For $\nu\in (0,\eps)$ arbitrarily fixed,
we consider for $t\geq\tau$ the continuous positive function
$$
\Pi(t)=\big(|\Lambda(\tau)|+\nu\big)E(t,\tau)+\int_\tau^t q_2(y)E(t,y)\d y,
$$
having defined
$$E(t,y)=\exp{\bigg[-(2\eps-\nu)(t-y)+\int_y^t q_1(z)\d z\bigg]}.$$
Note that
\begin{equation}
\label{monyooo}
E(t,y)\leq\e^{c_1}\e^{(\nu-\eps)(t-y)}, \quad\forall t> y\geq\tau.
\end{equation}
First, we prove the inequality
\begin{equation}
\label{ineq-gw}
\Lambda(t)< \Pi(t),\quad \forall t\geq \tau.
\end{equation}
To this aim, observe that $\Lambda(\tau)<\Pi(\tau)$. Moreover,
it is easily seen that for every $b> a\geq\tau$
\begin{equation}
\label{h1}
\Pi(b)+(2\eps-\nu)\int_a^b\Pi(y)\d y=\Pi(a)+\int_a^b q_1(y)\Pi(y)\d y+\int_a^b q_2(y)\d y.
\end{equation}
We introduce the set
$$\mathcal{O}=\big\{t>\tau: \Lambda(t)\geq \Pi(t)\big\}.$$
By contradiction, let $\mathcal{O}$ be nonempty. Then, defining
$$T=\inf \mathcal{O}<\infty,$$
due to the continuity of $\Lambda$ and $\Pi$ the following hold:
\begin{itemize}
\item[{\rm (i)}] $T>\tau$.
\item[{\rm (ii)}] $\Lambda(T)=\Pi(T)$.
\item[{\rm (iii)}] $\Lambda(t)<\Pi(t)$ for every $t\in[\tau,T)$.
\end{itemize}
In particular, for any $t\in[\tau,T)$,
\begin{equation}
\label{in1}
\Lambda(T)-\Lambda(t)>\Pi(T)-\Pi(t).
\end{equation}
On the other hand, appealing to \eqref{phi1} and \eqref{h1} with $a=t$ and $b=T$, we deduce
\begin{align*}
\Lambda(T)-\Lambda(t)&\leq -2\eps\int_t^T \Lambda(y)\d y+\int_t^T q_1(y)\Lambda(y)\d y
+\int_t^T q_2(y)\d y,\\
\Pi(T)-\Pi(t)&=-(2\eps-\nu) \int_t^T \Pi(y)\d y+\int_t^T q_1(y)\Pi(y)\d y+\int_t^T q_2(y)\d y.
\end{align*}
Therefore, plugging these relationships into \eqref{in1} we obtain
$$-2\eps\int_t^T \Lambda(y)\d y+\int_t^T q_1(y)\Lambda(y)\d y
>-(2\eps-\nu) \int_t^T \Pi(y)\d y+\int_t^T q_1(y)\Pi(y)\d y,$$
and owing to {\rm (iii)} we end up with
$$-2\eps\frac{1}{T-t}\int_t^T \Lambda(y)\d y>-(2\eps-\nu)\frac{1}{T-t}\int_t^T \Pi(y)\d y.$$
At this point, since $\Lambda$ and $\Pi$ are continuous (and equal at $T$),
we can pass to the limit as $t\to T$, so obtaining
$$(2\eps-\nu) \Pi(T)\geq 2\eps\Lambda(T)=2\eps\Pi(T).$$
Since $\Pi(T)>0$, we reach the contradiction
$$2\eps\leq2\eps-\nu.$$
At this point, writing \eqref{ineq-gw} explicitly, for every $t\geq\tau$ we draw
$$\Lambda(t)\leq
\big(|\Lambda(\tau)|+\nu\big)E(t,\tau)+\int_\tau^t q_2(y)E(t,y)\d y.$$
Making use of \eqref{monyooo}, we get
$$\Lambda(t)\leq
\e^{c_1}\big(|\Lambda(\tau)|+\nu\big)\e^{(\nu-\eps)(t-\tau)}
+\e^{c_1}\e^{(\nu-\eps)t}\int_\tau^t q_2(y)\e^{(\eps-\nu)y}\d y.$$
Arguing analogously as in \cite[Theorem 4.1]{PPV}, we estimate
$$\e^{(\nu-\eps)t}\int_\tau^t q_2(y)\e^{(\eps-\nu)y}\d y
\leq \frac{c_2\e^{\eps-\nu}}{1-\e^{\nu-\eps}},$$
providing
$$\Lambda(t)\leq
\e^{c_1}\big(|\Lambda(\tau)|+\nu\big)\e^{(\nu-\eps)(t-\tau)}
+\frac{c_2\e^{c_1}\e^{\eps-\nu}}{1-\e^{\nu-\eps}}.$$
As $\nu\in (0,\eps)$ is arbitrary, a final limit
$\nu\to 0$ gives
$$\Lambda(t)\leq\e^{c_1}|\Lambda(\tau)|\e^{-\eps(t-\tau)}
+\frac{c_2\e^{c_1}\e^{\eps}}{1-\e^{-\eps}},$$
as claimed.
\end{proof}



\end{document}